 \theoremstyle{plain}
 \newtheorem{thm}{Theorem}[section]
    \newtheorem{claim}[thm]{Claim}
 \newtheorem{cor}[thm]{Corollary}
 \newtheorem{lem}[thm]{Lemma}
 \newtheorem{prop}[thm]{Proposition}
 \newtheorem{conj}[thm]{Conjecture}
 \theoremstyle{definition}
 \newtheorem{defn}[thm]{Definition}
 \newtheorem{notation}[thm]{Notation}
 \newtheorem{ex}[thm]{Example}
 \newtheorem{constr}[thm]{Construction} 
 \newtheorem*{thm*}{Theorem}
 \theoremstyle{remark}
 \newtheorem{rmk}[thm]{Remark}
 \newtheorem{question}[thm]{Question}
\numberwithin{thm}{subsection} 
\def\beq{\begin{eqnarray}}
\def\eeq{\end{eqnarray}}
 \newcommand{\bp}{\begin{proof}[Proof]}
 \newcommand{\ep}{\end{proof}}
\DeclareSymbolFont{bbold}{U}{bbold}{m}{n}
\DeclareSymbolFontAlphabet{\mathbbold}{bbold}
\def\Sym{{\rm Sym}}
\def\Spec{{\rm Spec}}
\def\Jac{{\rm Jac }\,}
\def\Gal{{\rm Gal}}
\def\Prym{{\rm Prym}}
\def\codim{{\rm codim}}
\def\Sh{{\rm Sh}}
\def\qbar{{\overline{\mathbb{Q}}}}
\def\K3{{\rm K3}}
\def\Imag{{\rm Im}}
\def\GL{{\rm GL}}
\def\GSp{{\rm GSp}}
\def\Res{{\rm Res}}
\def\inv{{\rm inv}}
\def\prim{{\rm prim}}
\def\KS{{\rm KS}}
\def\Homo{{\rm Hom}}
\newcommand{\defeq}{\vcentcolon=}
\newcommand\nc{\newcommand}
\begin{document}

\title{Attractors are not algebraic}

\author{Yeuk Hay Joshua Lam and Arnav Tripathy}

\date{\today}

\begin{abstract} The Attractor Conjecture for Calabi-Yau moduli spaces predicts the algebraicity of the moduli values of certain isolated points picked out by Hodge-theoretic conditions. We provide a family of counterexamples to the Attractor Conjecture in all suitably high, odd dimensions conditional on the Zilber-Pink conjecture. 
\end{abstract}

\maketitle 
\setcounter{tocdepth}{1}
\tableofcontents

\section{Introduction}
\subsection{Statement of results}
In this paper, we study the following remarkable conjecture due to string theorists:
\begin{conj}[Moore]\label{conj:attractor}
If $X$ is an attractor Calabi-Yau 3-fold, then it is defined over $\qbar$.
\end{conj}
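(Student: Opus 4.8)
The plan is to realize an attractor point as an isolated point of a Hodge locus on the Calabi--Yau moduli space $\mathcal{M}$, invoke Cattani--Deligne--Kaplan to see that this locus is algebraic, and then descend its field of definition to $\qbar$ by a Galois-conjugation argument. The first step is to repackage the hypothesis Hodge-theoretically. For an attractor CY $3$-fold $X$ with primitive charge $\gamma\in H^3(X,\mathbb{Z})$ satisfying $\gamma^{2,1}=0$, the line $\mathbb{Q}\gamma$ lies in $H^{3,0}(X)\oplus H^{0,3}(X)$; in the isolated (``rank $2$'') case — the only case in which $X$ can reasonably be expected to be algebraic, since a positive-dimensional attractor locus over $\qbar$ contains transcendental points — the rational sub-Hodge structure $\Lambda\subset H^3(X,\mathbb{Q})$ generated by $\gamma$ has rank $2$, Hodge numbers $(1,0,0,1)$, and $\Lambda\otimes\mathbb{C} = H^{3,0}(X)\oplus H^{0,3}(X)$, so that $\Lambda^{3,0} = H^{3,0}(X)$. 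Following the analogy with CM points that motivates the conjecture, I would further focus on the case where $\Lambda$ is of CM type by an imaginary quadratic field $K$; after a Tate twist $\Lambda$ is then the $H^1$ of a CM elliptic curve, hence an abelian motive.

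The orthogonal projector $p_\Lambda\in\End_{\mathbb{Q}}(H^3(X))$ onto $\Lambda$ is a Hodge class for the weight-$0$ variation $\mathcal{E}nd(\mathcal{H}^3)$ on $\mathcal{M}$, so by Cattani--Deligne--Kaplan the locus $Z\subset\mathcal{M}$ where its flat transport stays Hodge — equivalently, where $H^3$ splits off a copy of $\Lambda$ — is a closed algebraic subvariety containing $[X]$, and in the isolated case $[X]$ is a $0$-dimensional component of it. Crucially, $\mathcal{M}$, the universal family, the Hodge filtration on relative de Rham cohomology and its comparison with Betti cohomology are all defined over $\qbar$. Since there are only countably many isomorphism classes of CM data $(K,\text{CM type},\text{polarization})$, each contributing only finitely many such points, the set $S\subset\mathcal{M}(\mathbb{C})$ of all isolated CM attractor points is countable. (Alternatively one could work directly with $\gamma$ instead of $\Lambda$ and argue that the a priori merely real-analytic locus $\{\gamma^{2,1}=0\}$ is in fact complex algebraic via the definability of the period map of Bakker--Klingler--Tsimerman; but the $\Lambda$-formulation is what makes the descent below tractable.)

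It then suffices to show $S$ is stable under $\Aut(\mathbb{C})$: a countable $\Aut(\mathbb{C})$-stable subset of a $\qbar$-variety lies in $\mathcal{M}(\qbar)$, since a point not defined over $\qbar$ has uncountable $\Aut(\mathbb{C}/\qbar)$-orbit. Fix $\sigma\in\Aut(\mathbb{C})$ and consider the conjugate threefold $X^\sigma$. Its Hodge filtration on $H^3_{\dR}$ is the $\sigma$-transport of that of $X$, since it is algebraic over the field of definition; and $K$, realized by algebraic correspondences on the associated CM elliptic curve, likewise transports. What does \emph{not} obviously transport is that $\sigma(p_\Lambda)$ — a priori only a de Rham or \'etale class — is again a \emph{Hodge}, i.e. Betti-rational, class on $X^\sigma$; if it is, it cuts out a CM sub-Hodge structure $\Lambda^\sigma\subset H^3(X^\sigma)$ of type $(1,0,0,1)$, so $X^\sigma$ is again an isolated CM attractor threefold, $S$ is $\Aut(\mathbb{C})$-stable, and the conjecture follows.

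The essential difficulty is precisely this last point: it is exactly the assertion that $p_\Lambda$ is an \emph{absolutely} Hodge class on $X$. Deligne's theorem supplies absolute Hodge-ness for Hodge classes on abelian varieties; but although the summand $\Lambda$ is abelian-motivic, its orthogonal complement $\Lambda^\perp\subset H^3(X)$ — of rank $2h^{2,1}(X)$ and Hodge type $(0,h^{2,1},h^{2,1},0)$ — is subject to no further constraint and, for a typical attractor point, is not of abelian type, so $H^3(X)$ is not known to be an abelian motive and the absolute-Hodge input is unavailable in general. The possibility that Galois conjugation moves $[X]$ off the attractor locus once $\Lambda^\perp$ is sufficiently generic is exactly the phenomenon the rest of the paper exploits — conditionally on Zilber--Pink — to produce counterexamples in high odd dimension; a proof of Conjecture~\ref{conj:attractor} for threefolds would have to rest on an argument, special to $\dim X = 3$, that rules such adversarial $\Lambda^\perp$ out.
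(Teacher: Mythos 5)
This statement is Moore's conjecture itself, which the paper does not prove and does not claim to prove: the entire thrust of the paper is in the opposite direction, namely that the higher-dimensional analogue fails conditionally on Zilber--Pink, and \S 2 explicitly argues that even in dimension three "the attractor condition itself is too weak to suggest any motivic splitting, and one should expect no particular algebraicity in general." So there is no proof in the paper to compare yours against, and any purported proof of the conjecture should be viewed with the suspicion that it is proving something the authors believe (for non-Shimura moduli) to be false.

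Your argument has a genuine gap beyond the absolute-Hodge issue you flag at the end, and it occurs at the very first reduction. You pass from "attractor" to "rank-two attractor" by asserting that in the isolated case the rational sub-Hodge structure generated by $\gamma$ has rank $2$ with $\Lambda\otimes\mathbb{C}=H^{3,0}\oplus H^{0,3}$. This is false: the attractor condition supplies only a single integral class $\gamma\in H^{3,0}(X)\oplus H^{0,3}(X)$, and the line $\mathbb{Q}\gamma$ is not a sub-Hodge structure (its complexification is a line in $H^{3,0}\oplus H^{0,3}$ equal to neither summand when $\gamma^{3,0}\neq 0$, hence not graded for the Hodge decomposition); the smallest rational sub-Hodge structure containing $\gamma$ is generically far larger than rank two, possibly all of $H^3(X,\mathbb{Q})$. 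Nor does isolatedness force rank two: for fixed $\gamma$ the attractor condition imposes $h^{2,1}$ conditions on the $h^{2,1}$-dimensional moduli space, so attractor points are generically isolated while the lattice $H^3(X,\mathbb{Z})\cap(H^{3,0}\oplus H^{0,3})$ remains of rank one. Your dichotomy "positive-dimensional or rank two" therefore collapses, and the Cattani--Deligne--Kaplan/CM machinery you set up only ever engages the rank-two CM case --- exactly the case the paper already concedes should be algebraic under the standard conjectures, and which is not the content of the conjecture. In the generic rank-one case there is no projector $p_\Lambda$, no Hodge locus of the form you describe, and no CM elliptic curve to conjugate; this, together with the absolute-Hodge obstruction you correctly identify even in the rank-two case, is precisely why the conjecture remains open and why the authors expect it to fail off Shimura loci.
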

We  recall the definition of an attractor Calabi-Yau variety: 

\begin{defn}
For  $X$ a Calabi-Yau $d$-fold, we say that it is an \emph{attractor variety} if there is a nonzero integral cohomology class $\gamma \in H^d(X, \mb{Z})$ satisfying 
\[
\gamma \perp H^{d-1, 1},
\]
where $H^{d-1,1}\subset H^d(X, \mb{C})$ denotes the $(d-1,1)$ piece of the Hodge decomposition. 
\end{defn}
These varieties were originally introduced and studied by~Ferrara-Kallosh-Strominger for Calabi-Yau threefolds as the case most directly of interest in string theory; Calabi-Yau fourfolds were also considered shortly thereafter~\cite[Section 3.8]{mooreleshouches}. The above definition in general dimension was then given in~\cite{Brunner-Roggenkamp}, as we discuss somewhat further in \S 2. Note to build intuition that the above condition should impose $h^{d-1,1}$ conditions, where we note $h^{d-1,1} = \dim H^1(X, TX)$ as the dimension of Calabi-Yau moduli space; as such, one typically expects attractor Calabi-Yaus (for some fixed $\gamma$) to be isolated in moduli space, which is indeed the case for the examples we consider below. It is hence certainly of interest, irrespective of the physical genesis of the question, to investigate the arithmetic structure of the points picked out by this natural Hodge-theoretic condition. 


Our main result is then the following:
\begin{thm}\label{thm:main}
 Under the Zilber-Pink conjecture, the analogue of Conjecture \ref{conj:attractor} for Calabi-Yau varieties of arbitrary dimension is false. More precisely, there exist attractor Calabi-Yau varieties in all odd dimensions except 1, 3, 5 and 9 which are not defined over $\qbar$.
\end{thm}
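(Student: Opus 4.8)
The plan is to find, in each admissible odd dimension, a Calabi–Yau variety whose Hodge structure is ``small enough'' to be analyzed explicitly—ideally one built by a tensor/exterior-power or Kuga–Satake-type construction out of an elliptic curve or a weight-one abelian variety—so that the attractor condition $\gamma \perp H^{d-1,1}$ becomes a system of algebraic equations on a period domain that one can study via unlikely intersections. Concretely, I would look for families $\{X_t\}$ over a base $S$ (a modular or Shimura-type curve, or a product thereof) such that $H^d(X_t)$ contains, as a sub-Hodge structure, something like $\mathrm{Sym}^k$ or $\wedge^k$ of the $H^1$ of an elliptic curve $E_t$; the restriction on the dimension $d$ (odd, not $1,3,5,9$) should come from requiring that the numerology of such a construction produces a genuine Calabi–Yau (trivial canonical bundle, $h^{d,0}=1$, correct $h^{d-1,1}$) and that the relevant Mumford–Tate/Hodge group is large enough for Zilber–Pink to have content. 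The small exceptional dimensions are presumably exactly those where the construction degenerates or the resulting attractor locus is forced to be a Shimura subvariety (hence potentially algebraic), so part of the work is checking where the construction first ``works.''

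The key steps, in order, would be: \textbf{(1)} Produce the family $\pi \colon \mathcal{X} \to S$ and identify the variation of Hodge structure on $R^d\pi_*\mathbb{Z}$, pinning down its Mumford–Tate group and the sub-VHS relevant to the attractor condition. \textbf{(2)} For a fixed primitive class $\gamma$, describe the attractor locus $Z_\gamma = \{t \in S : \gamma \perp H^{d-1,1}(X_t)\}$ as a subvariety of $S$ cut out by Hodge-theoretic (hence, after choosing a period coordinate, analytic-algebraic) conditions; show it is a countable union of points for generic $\gamma$, consistent with the ``$h^{d-1,1}$ conditions'' heuristic in the introduction. \textbf{(3)} Observe that an attractor point defined over $\qbar$ would, together with the algebraicity of the Hodge-theoretic locus, be forced to lie on a special (Shimura) subvariety of $S$—this is where the André–Oort/Zilber–Pink philosophy enters: the attractor condition is an ``unlikely intersection'' between the image of $S$ under the period map and a fixed algebraic subvariety of the period domain (the orthogonal complement locus of $\gamma$), so Zilber–Pink predicts that only finitely many attractor points can be ``atypical,'' i.e. special. \textbf{(4)} Exhibit infinitely many attractor points in $Z_\gamma$ (or across varying $\gamma$) that are provably \emph{not} special—e.g. by a counting/equidistribution argument showing attractor points are Zariski-dense while special points satisfying the same linear condition are not—so that Zilber–Pink forces some attractor point to be non-special, and then argue that non-special $+$ the rigidity of CM/algebraicity implies the corresponding $X_t$ is not defined over $\qbar$.

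The main obstacle I expect is \textbf{Step (4)}: separating ``attractor but not special'' rigorously. One must rule out the possibility that \emph{every} attractor point of the family happens to be a CM/special point (which would make them all algebraic and kill the counterexample), and conversely one needs the non-special attractor points to actually fail to be defined over $\qbar$—this requires knowing that, for the family in question, being defined over $\qbar$ forces the Hodge structure to be of CM type (or at least forces the point into a proper special subvariety), which is a statement in the spirit of the ``algebraic $\Rightarrow$ motivic $\Rightarrow$ special'' chain but is not automatic and may need the specific geometry of the construction. A secondary difficulty is making Step (2) precise enough that Zilber–Pink applies in its standard formulation: one has to realize $Z_\gamma$ as an intersection inside a \emph{mixed} or pure Shimura variety (to accommodate the ``point'' $\gamma$ in the local system as well as the variety's own moduli) so that the conjecture's hypotheses are literally met, and verify the dimension count that makes the intersection ``unlikely.'' I would also need to handle Step (1)'s Calabi–Yau bookkeeping carefully, since the whole theorem hinges on the exceptional dimensions $1,3,5,9$ being exactly the ones where the construction fails to give a non-special attractor.
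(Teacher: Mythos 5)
There is a genuine gap, and you have already put your finger on it: in your step (4) you observe that you need to know ``being defined over $\qbar$ forces the Hodge structure to be of CM type (or at least forces the point into a proper special subvariety),'' and you correctly worry that this is not automatic. The paper's crucial input here is \emph{transcendence theory}, specifically the Shiga--Wolfart theorem (a consequence of W\"ustholz's analytic subgroup theorem): an abelian variety defined over $\qbar$ carrying a nonzero $\omega \in \Gamma(A,\Omega^1_A)_{\qbar}$ all of whose period ratios are algebraic must have complex multiplication. The Dolgachev Calabi--Yaus used in the paper (crepant resolutions of $n$-fold cyclic covers of $\mb{P}^{2n-3}$) are rigged exactly so this applies: their middle cohomology is identified with wedge powers of $H^1$ of an auxiliary curve $C$ (an $n$-fold cyclic cover of $\mb{P}^1$), the attractor condition becomes the statement that the one-dimensional eigenspace $H^{1,0}(C)[1]$ is defined over $\mb{Q}(\zeta_n)$, i.e.\ that the relevant period ratios lie in $\mb{Q}(\zeta_n)$, and Shiga--Wolfart then forces a CM factor of $\Jac C$. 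Your Kuga--Satake/elliptic-curve constructions would not obviously provide this bridge, and neither Zilber--Pink nor Andr\'e--Oort can supply it: those say that special points are sparse or algebraic, not that algebraic points are special. Without a transcendence result in the style of W\"ustholz, ``defined over $\qbar$'' gives you nothing Hodge-theoretic to work with.

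Your steps (3) and (4) also reorganize the logic in a way that makes the argument harder than it needs to be. The paper argues by contradiction: \emph{assume} all attractors are algebraic; then Shiga--Wolfart puts each attractor on a Hecke translate of a fixed special subvariety $\Sh_A$ of codimension strictly greater than $\dim \mc{M}_{0,2n} = 2n-3$; density of attractors (proved by observing the Schwarz map is a local biholomorphism and $\mb{Q}(\zeta)$ is dense in $\mb{C}$) plus Zilber--Pink then forces $\mc{M}_{0,2n}$ into a proper special subvariety of the ambient PEL Shimura variety; but the Deligne--Mostow monodromy computation rules this out. So one never has to exhibit a specific non-special attractor, which is what you were bracing for. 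Finally, the exceptional dimensions $1,3,5,9$ are not where ``the construction degenerates'' in a vague sense: they are exactly $2n-3$ for $n \in \{2,3,4,6\}$, i.e.\ when $\phi(n)=2$, which is precisely when $\codim_{\Sh}\Sh_A = 2n-3 = \dim \mc{M}_{0,2n}$ so the intersection ceases to be unlikely --- and these coincide with the cases where $\mc{M}_{0,2n}$ is itself a Shimura variety and the attractor points are honest CM points.
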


Indeed, for the family of counterexamples we consider, we show the much stronger statement that the set of attractor points defined over $\qbar$ must be non-Zariski dense in the Calabi-Yau moduli space. As we will check that the set of attractor points is indeed dense (even in the analytic topology), these families give extremely strong counterexamples in the sense that almost all attractor points fail to be defined over $\qbar$.

Amusingly, the Calabi-Yau examples we consider are decidedly \emph{not} counterexamples in dimensions 1, 3, 5 or 9. These examples have already been well-understood in the context of flat surfaces (in the theory of Teichm\"{u}ller dynamics). Indeed, in these cases the attractors are indeed algebraic and are moreover examples of CM points on Shimura varieties.  

While we give the specific counterexamples above due to the particular techniques we bring to bear, we expect a much more general transcendence property for these attractor points.

\begin{conj} The algebraic attractor points in the moduli space of a Calabi-Yau $X$ are Zariski dense if and only if said moduli space is a Shimura variety. \end{conj}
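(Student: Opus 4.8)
Our plan is to establish the two implications of the biconditional separately. The direction ``(a finite cover of) the moduli space is a Shimura variety $\Rightarrow$ the algebraic attractor points are Zariski dense'' is comparatively soft, whereas the converse is deep and reruns the circle of ideas behind Theorem~\ref{thm:main} — the period map together with Zilber--Pink and atypical intersections — in the opposite direction.

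For the soft direction, suppose that $M$ admits a finite cover which is a Shimura variety $S$ carrying the variation of Hodge structure on $H^d$ as its tautological one. The CM points of $S$ are Zariski dense (indeed dense in the analytic topology) and each is defined over $\qbar$ by the theory of canonical models, so it suffices to exhibit a Zariski-dense set of CM points that are attractor points. At a CM point the Mumford--Tate group is a torus $T$, so $H^d(X,\mb{C})$ breaks up as $\bigoplus_\chi V_\chi$ over the characters of $T$, each $V_\chi$ lying inside a single Hodge summand; a direct computation with the polarization shows that the orthogonal complement of $H^{d-1,1}\oplus H^{1,d-1}$ contains every character space whose Hodge type differs from $(d-1,1)$ and $(1,d-1)$, in particular $H^{d,0}\oplus H^{0,d}$. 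For a CM point whose CM type is in sufficiently general position — still a Zariski-dense family as the CM type varies — the entire Galois orbit of the character cutting out $H^{d,0}$ avoids the finitely many types $(d-1,1)$, $(1,d-1)$, so that orbit spans a nonzero \emph{rational} sub-Hodge structure orthogonal to $H^{d-1,1}$, and any nonzero integral vector $\gamma$ in it witnesses the attractor condition.

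For the converse, suppose the algebraic attractor points are Zariski dense in $M$; the argument proceeds in two stages. \textbf{Stage A.} One first shows that an algebraic attractor point is necessarily a CM point. The attractor condition $\gamma\perp H^{d-1,1}$ asks an integral Betti class to sit in a prescribed position relative to the Hodge decomposition, which for a Calabi--Yau defined over $\qbar$ is a transcendence coincidence among its periods; by the Grothendieck period-conjecture heuristic such a coincidence should force extra endomorphisms on the Hodge structure. Concretely, one would run Zilber--Pink against the period map of $M$ exactly as in the proof of Theorem~\ref{thm:main}: a Zariski-dense set of algebraic points each meeting the attractor locus — a subvariety of unexpectedly high codimension — is an atypical phenomenon that cannot persist unless it is ``explained'', and, the attractor locus being $0$-dimensional, the only available explanation is that the Mumford--Tate group degenerates all the way to a torus at each such point. \textbf{Stage B.} We are then handed a Zariski-dense set of CM points in the base $M$ of a Calabi--Yau variation of Hodge structure. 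By the (expected, and in the cases of interest known) converse to the André--Oort conjecture for variations of Hodge structure — in the spirit of the work of Klingler and collaborators on Hodge loci and atypical intersections — density of CM points forces $M$, after a finite cover, to be a special subvariety of the period domain; since the variation is of Calabi--Yau type ($h^{d,0}=1$) and the period map is an immersion by the local Torelli theorem for Calabi--Yau varieties, this special subvariety is Hermitian symmetric, i.e.\ $M$ is a Shimura variety with its tautological variation, as desired.

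The main obstacle is Stage A of the converse, namely the implication ``algebraic attractor point $\Rightarrow$ CM point.'' This is a strong transcendence statement not available even granting Zilber--Pink in the generality required, and is essentially why the assertion is posed as a conjecture rather than a theorem — indeed, whether a \emph{single} algebraic attractor point must be of CM type is already open. A secondary difficulty lies in Stage B: the André--Oort-type input needed (density of CM points forcing specialness of the base of a variation of Hodge structure) is presently available only under additional hypotheses on the variation, so that, even granting Stage A, an unconditional argument would have to be restricted to variations dominated by abelian ones — precisely the setting in which Theorem~\ref{thm:main} operates.
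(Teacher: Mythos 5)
The statement you are addressing is posed in the paper as a conjecture, not a theorem: the paper offers no proof of either implication, only evidence (the conditional counterexamples of Theorem~\ref{thm:main} in the non-Shimura Dolgachev cases, and the observation that for $n=3,4,6$ the moduli space is Shimura and the attractors are CM points). So there is no argument of the paper's to compare yours against, and your own text concedes --- correctly --- that what you have written is a strategy rather than a proof. With that understood, the two gaps you flag are indeed the essential ones, but let me be precise about why Stage~A fails even on its own terms. Zilber--Pink, applied to a Zariski-dense family of atypical intersections, concludes only that the base is contained in a proper special subvariety of the ambient Shimura variety or period domain; it never concludes that the Mumford--Tate group at each individual point ``degenerates all the way to a torus.'' In the paper's own setting the CM-ness is not extracted from Zilber--Pink at all but from the Shiga--Wolfart theorem (W\"{u}stholz's analytic subgroup theorem), and even there the conclusion is only that one isogeny factor of the Jacobian of the auxiliary curve is CM, not that the attractor point is a CM point of the period domain. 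That transcendence input is available only because the Dolgachev Hodge structures are of abelian type; for a general Calabi--Yau (say the quintic threefold) no analogue is known, and ``algebraic attractor $\Rightarrow$ CM'' is wide open. Stage~B (density of CM points forcing the base to be special) likewise requires an Andr\'e--Oort-type theorem for general variations of Hodge structure that is not currently available in the generality you need.

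The forward (``if'') direction is also not as soft as you suggest. At a CM point the decomposition into character spaces and the orthogonality of distinct Hodge summands are fine, but you still need (i) that CM points whose CM type is ``in sufficiently general position'' remain Zariski dense --- the set of CM types occurring densely depends on the Shimura datum and must be checked --- and (ii) that the resulting rational sub-Hodge structure contains an integral class $\gamma$ with $\gamma^{d,0}\neq 0$, which is required for $X$ to be an attractor \emph{point} in the sense of Definition~\ref{defn:attractor} rather than merely an attractor; without it one may only produce the positive-dimensional attractor families the paper explicitly warns about. Both points are verified in the paper's examples ($S\times E$, abelian threefolds, the $n=3,4,6$ Dolgachev families), but a general argument would have to engage with the classification of Calabi--Yau type Hermitian variations of Hodge structure. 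In short: your outline is a reasonable account of why the conjecture should be true and of where the difficulties lie, but it is not a proof, and the paper does not claim one either.
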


We pause to explain the nomenclature and history of our examples, as well as to point to some related examples. The Calabi-Yau construction we use is that of a crepant resolution of an $n$-fold cyclic cover of $\mathbb{P}^{2n-3}$ branched at a suitable hyperplane collection, following~\cite{sxzminimal}. We follow these authors in citing Dolgachev for his study of the moduli spaces thereof (as attempting to answer the famous question of B. Gross on realizing ball quotients as geometric moduli spaces) as in~\cite{DvGK, DK}, terming these \emph{Dolgachev Calabi-Yaus}.

In particular, one could certainly consider variants of the construction we investigate here, such as a family of double covers of projective space now branched at some other suitable hyperplane arrangement. This latter family contains a Calabi-Yau threefold example with non-Shimura moduli~\cite{sxzmonodromy}. Following our conjecture above, we hence suggest the following case as a particularly attractive next area for investigation:

\begin{question}
Does the Attractor Conjecture hold for the family of double cover Dolgachev Calabi-Yau threefolds?
\end{question}

  \subsection{History of the problem and related  works}
Attractor varieties in the context of Calabi-Yau threefolds were originally discovered by Ferrara-Kallosh-Strominger in the context of Calabi-Yau threefold compactifications of string theory. They have been the subject of focused study since; mathematically, for example, they are conjectured to govern the behavior of the enumerative geometry of Calabi-Yau threefolds~\cite{kontsevichsoibelman}. Moore in~\cite{moore} performed an in-depth study and made various conjectures about their possible arithmetic properties, including the Conjecture~\ref{conj:attractor} above. In particular, Moore investigated various examples such as $S \times E$ for $S$ a K3 surface and $E$ an elliptic curve, a quotient thereof known as the FHSV model, and abelian threefolds. In all these cases, the attractor points are defined over $\mathbb{Q}$; however, note that all these examples have Shimura moduli (and indeed, the attractor points are special points in said Shimura variety). 

These attractor points have many other becoming properties analogous to those of special points of Shimura varieties. Douglas and his coauthors studied the distribution of these points in their moduli space in~\cite{douglas, denefdouglas, DSZ1, DSZII, DSZIII}; the last series of papers by Douglas-Shiffman-Zelditch in particular developed strong heuristics to suggest that attractor points equidistribute in moduli space with its natural Weil-Petersson metric (together with strong numerical evidence in myriad cases to support said claim). We in fact use such distributional results in our proof of Theorem~\ref{thm:main}, although we only need the much weaker statement that attractor points are Zariski dense, which we can verify directly in \S \ref{subsection:dense}.

\subsection{Outline of proof of Theorem \ref{thm:main}}
We sketch the proof of Theorem \ref{thm:main}.
 We proceed by contradiction, so we assume that all attractor Calabi-Yau varieties are in fact defined over $\qbar$. We consider the Dolgachev Calabi-Yau varieties as constructed in \S \ref{subsection:dolgcys}; these give examples of Calabi-Yau varieties defined in all odd dimensions and will provide our counterexamples in almost all cases. These Dolgachev Calabi-Yau varieties $X$ are constructed  from an associated curve $C$, and the middle Hodge structures of $X$ and $C$ are closely related as reviewed in Section \ref{subsection:dolgcys}.
   Next, it is not difficult to check that the attractors are Zariski (in fact, even analytically) dense in the moduli space $\mc{M}$.
    Secondly we show that, for $X$ a Dolgachev Calabi-Yau variety, if it is attractor and defined over $\qbar$, then its Jacobian splits in the isogeny category as $A_1\times A_2$ where the abelian variety $A_1$ has complex multiplication (CM) by a fixed cyclotomic field. The crucial ingredient here is a theorem of Shiga-Wolfart (following W\"{u}stholtz) in transcendence theory, which, informally, implies that an abelian variety defined over $\qbar$ is CM as soon as it has sufficiently many algebraic period ratios, which in our case follows from the attractor condition and prior Hodge-theoretic analysis.

   This splitting of $\Jac(C)$ up to isogeny then may now be thought of as a problem in the intersection theory of Shimura varieties: \emph{a priori}, $\Jac(C)$ naturally defines a point of an ambient Shimura variety $\Sh$ and the isogeny splitting condition above implies that $\mc{M}$ intersects the Hecke translates of a sub-Shimura variety $\Sh_A$ of $\Sh$ in a dense set of points. The attractor condition has hence reduced to a problem in \emph{unlikely intersection theory}. In particular, when the codimensions of $\mc{M}$ and $\Sh_A$ in $\Sh$ sum to less than the dimension of $\Sh$, the Zilber-Pink conjecture implies that $\mc{M}$  is contained in some proper Shimura subvariety. This is the point where the argument fails for small values of the dimension of the Calabi-Yau varieties; in fact, in these cases the moduli space $\mc{M}$ turns out to be a Shimura variety, the attractor points are CM points, and therefore the Attractor Conjecture holds. In the general case, we instead use a result of Deligne-Mostow on the monodromy groups of these varieties to show that, for almost all dimensions, $\mc{M}$ not contained in any proper Shimura subvariety of $\Sh$, and hence we have a contradiction as desired.

\subsection{Outline of the rest of the paper}
     In Section \ref{section:attconj} we introduce the attractor condition on a Calabi-Yau variety as well as the Attractor Conjecture, along with providing somewhat more context for the interested reader. Section $3$ continues with the main thrust of the proof above by defining the Dolgachev Calabi-Yaus $X$ along with their associated curves $C$ before establishing the relation between the Hodge structures thereof. Section 4 applies the theorem of Shiga-Wolfart to reduce to a problem in Shimura theory before setting up the formalism of the ambient Shimura variety $\Sh$ and its special Shimura subvariety $\Sh_A$. We finally conclude in Section 5 with a discussion on the unlikely intersection theory of this Shimura variety problem.
\subsection{Notations and conventions}
We set a few conventions now. We work throughout with the Hermitian intersection pairing on the middle-degree complex cohomology of a manifold; under this pairing, distinct Hodge summands are orthogonal. For a vector space $V$ defined over some field $K$ and a field extension $K \subset L$, we often denote the extension of scalars $V \otimes_K L$ as $V_L.$ For an algebraic variety $X$ over $\mb{C}$, we will sometimes say that $X$ is \textit{algebraic} to mean that it is defined over $\overline{\mb{Q}}$, since the latter is tautologically  equivalent to the statement that the coordinates of the point corresponding to $X$ in its moduli space being algebraic numbers.

Much of the motivation for this work arose from discussions with Shamit Kachru and Akshay Venkatesh, and it is a pleasure to thank them. We are also grateful to Matt Emerton, Phil Engel, Mark Kisin, Barry Mazur, Greg Moore, Curt McMullen, Minhyong Kim, Ananth Shankar, and Max Zimet for illuminating comments, questions, and discussions. AT is supported under NSF MSPRF grant 1705008.

\section{The Attractor Conjecture}\label{section:attconj}

We recall the attractor condition for (higher-dimensional) Calabi-Yaus with slightly more precision now. Note that in general, we take a Calabi-Yau variety to be a smooth, projective variety $X$ with trivial canonical bundle and (\emph{a priori}) defined over the complex numbers $\mathbb{C}$. In practice, however, we will work with a specific family defined shortly in \S \ref{subsection:dolgcys}. 

\begin{defn}\label{defn:attractor}
Given a Calabi-Yau $d$-fold $X$, then for each nonzero class $\gamma \in H^d(X, \mb{Z})$, $X$ is said to be an \emph{attractor} for the class $\gamma$ if the following condition
\[
\gamma \perp H^{1, d-1}(X)
\]
holds. If in addition we have $\gamma^{d, 0} \ne 0$, we will refer to $X$ as an \emph{attractor point}. \end{defn}
\begin{rmk} Note the condition $\gamma^{d, 0} \ne 0$ is vacuous in the original case of threefolds, as $\gamma^{3, 0} = 0$ and the attractor condition would simply imply $\gamma = 0$ by Hodge theory. Our reason for emphasizing this condition is that in general, for higher-dimensional Calabi-Yaus, omitting this condition would allow the possibility for non-isolated attractors (i.e. positive-dimensional families); it is indeed straightforward to construct examples where this happens, such as the very families of Dolgachev Calabi-Yaus considered in this paper when $n$ is not prime. As such, the higher-dimensional Attractor Conjecture would be trivially false. \end{rmk}

In fact, this attractor condition in high dimensions is essentially due to~\cite{Brunner-Roggenkamp} in their \S 4, although they instead consider critical points of the central charge function $$Z_{\gamma}\colon= \frac{\langle \gamma, \Omega \rangle}{\sqrt{\langle \Omega, \overline{\Omega}\rangle}}.$$ One may easily verify that this condition is equivalent to the attractor condition; it may be amusing to note that in fact there is a natural flow on (the universal cover) of the Calabi-Yau moduli space induced by gradient flow for $\log |Z_{\gamma}|^2$ which naturally dynamically produces these special points as fixed points of said flow. These considerations will play no role in our analysis, however. 

\begin{conj}[{\cite[Conjecture 8.2.2]{moore}} for  the case of threefolds]\label{conj:mooreattractor}
If $X$ is an attractor variety for a non-zero class $\gamma \in H^d(X, \mb{Q})$ such that $\gamma^{d,0}\neq 0$, then it has a model over $\overline{\mb{Q}}$.
\end{conj}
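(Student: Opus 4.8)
The natural plan is to show that an isolated attractor point is a \emph{special point} for a suitable Hodge-theoretic datum and then invoke the algebraicity of such points. First I would rigidify the setup: work over a moduli space $\mc{M}$ carrying a universal family, both defined over a number field, so that the polarized variation of Hodge structure on $H^d$ and the period map $\mc{M}\to\Gamma\backslash D$ are defined over $\overline{\mb{Q}}$. Since $\gamma\in H^d(X,\mb{Q})$ is real, the attractor condition $\gamma\perp H^{1,d-1}$ is equivalent, by conjugation, to $\gamma$ having vanishing component in \emph{both} $H^{1,d-1}$ and $H^{d-1,1}$, i.e.\ $\gamma\in\bigoplus_{(p,q)\notin\{(1,d-1),(d-1,1)\}}H^{p,q}$; for $d=3$ this reads $\gamma\in H^{3,0}\oplus H^{0,3}=F^3\oplus\overline{F^3}$. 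The hypothesis $\gamma^{d,0}\neq 0$ ensures, as in the remark above, that the resulting locus $D_\gamma\subset D$ is genuinely lower-dimensional, so attractor points are isolated, and the goal becomes to show that such an isolated point lies over $\overline{\mb{Q}}$.

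One might first hope to mimic the algebraicity of components of Hodge loci (Cattani--Deligne--Kaplan, Voisin's theorem on their fields of definition), but the condition defining $D_\gamma$ involves complex conjugation --- $\gamma\in F^d\oplus\overline{F^d}$ in the threefold case --- so $D_\gamma$ is only \emph{real}-algebraic, not algebraic, and those methods do not apply directly. Instead, the key step I would aim for is that at an isolated attractor point the Mumford--Tate group of $H^d(X)$ is a torus, i.e.\ the point is a CM point of its Mumford--Tate domain; granting this, algebraicity follows from the theory of canonical models, since CM points of Shimura varieties are defined over $\overline{\mb{Q}}$ while $\mc{M}$ and its variation are defined over $\overline{\mb{Q}}$. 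To force the CM property, the idea would be that the attractor condition together with $\gamma^{d,0}\neq 0$ confines $\gamma$ to so few Hodge types that the smallest rational sub-Hodge-structure $V_\gamma\subseteq H^d(X,\mb{Q})$ containing $\gamma$ is two-dimensional of type $\{(d,0),(0,d)\}$, hence of CM type, and a splitting of $H^d(X)$ off $V_\gamma$ then supplies enough endomorphisms to rigidify the period point. For $d=3$ one could instead proceed explicitly through the Ferrara--Kallosh--Strominger attractor equations, which express the (rational) charges $\gamma^I$ as bilinear combinations of the periods of the holomorphic $3$-form $\Omega$, the central charge, and their complex conjugates, and then feed the resulting algebraic relations among periods into a Shiga--Wolfart/W\"{u}stholz-type transcendence criterion to deduce algebraicity of the moduli point directly.

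The main obstacle is precisely this CM/rigidity step, and it is a serious one: the attractor condition is a property of the Hodge decomposition at a single point and need not imply that the Mumford--Tate group there is small. It constrains only the components of $\gamma$ in the \emph{extreme} Hodge pieces --- for $d\geq 5$ the intermediate summands $H^{p,q}$ with $1<p<d-1$ may carry part of $\gamma$ and are in any case unconstrained, so the Hodge structure on $V_\gamma^{\perp}$ can a priori vary in moduli, and one extracts at best a small CM or abelian-type sub-Hodge-structure rather than full rigidity. Even for threefolds, where $V_\gamma$ is genuinely two-dimensional, one must still rule out nontrivial variation of $V_\gamma^{\perp}=H^{2,1}\oplus H^{1,2}$, and the attractor equations alone give only finitely many algebraic relations among the periods and their conjugates, which is far weaker than algebraicity of the moduli point. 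Closing this gap --- whether by a structural argument pinning down $V_\gamma^{\perp}$ or by a transcendence input strong enough to upgrade period relations to algebraicity of \emph{all} moduli --- is the crux and is exactly where a naive argument stalls; one should expect any successful proof to be special to small dimensions.
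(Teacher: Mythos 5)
The statement you were asked about is a \emph{conjecture} (Moore's Attractor Conjecture, stated in general dimension), and the paper does not prove it --- the entire point of the paper is to \emph{disprove} it. Theorem~\ref{thm:main} shows, conditionally on Zilber--Pink, that the conjecture is false for all odd dimensions except $1$, $3$, $5$ and $9$, by exhibiting Dolgachev Calabi--Yau attractor points that cannot be defined over $\overline{\mb{Q}}$. So no proof along your lines (or any lines) can succeed in general, and your attempt was doomed from the start as a proof of the stated claim.

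That said, your final paragraph correctly locates the fatal gap, and it is worth seeing how the paper weaponizes exactly that gap. You observe that the attractor condition pins down only the extreme Hodge pieces of $\gamma$, yields at best a small CM or abelian-type sub-Hodge-structure, and gives ``only finitely many algebraic relations among the periods,'' far short of rigidifying the period point. The paper runs this in the contrapositive: for the Dolgachev families, the attractor condition is equivalent (Lemma~\ref{attractorcondition}) to the line $H^{1,0}(C)[1]$ of the associated curve being defined over $\mb{Q}(\zeta)$, i.e.\ to precisely the kind of period relations you mention. \emph{Assuming} the attractor point is algebraic, Shiga--Wolfart/W\"ustholz (Theorem~\ref{shigawolfart}) upgrades these relations to a CM abelian subvariety splitting off $\Jac C$ up to isogeny --- exactly the ``small CM sub-Hodge-structure'' you anticipated, and nothing more. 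The paper then shows this partial splitting is an \emph{unlikely intersection}: the relevant special subvarieties $\Sh_A$ have codimension strictly greater than $\dim \mc{M}_{0,2n}=2n-3$ once $\phi(n)>2$, attractor points are dense in moduli, and the Deligne--Mostow monodromy computation rules out $\mc{M}_{0,2n}$ lying in any proper special subvariety; Zilber--Pink then forces a contradiction, so the attractor points are not algebraic. Your closing intuition that ``any successful proof should be special to small dimensions'' matches the paper exactly: the cases $n=3,4,6$ (dimensions $3,5,9$), where $\phi(n)=2$ and the moduli space is Shimura, are precisely where the attractor points are CM points and the conjecture holds --- which is also the only regime where your proposed CM-point strategy is sound.
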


The above conjecture would suggest that these points picked out by the Hodge-theoretic attractor condition are a class of special points analogous in myriad aspects to special points of Shimura varieties. (Indeed, Moore also makes a counterpart conjecture that the periods of these points are algebraic.) And while Moore originally makes the conjecture for Calabi-Yau threefolds, Brunner-Roggenkamp exhibit that the same considerations apply in all respects to higher-dimensional Calabi-Yaus, and so we find it equally interesting to study the veracity of this conjecture in higher dimensions. Note that in our phrasing above, we assume that $\gamma^{d, 0} \ne 0$ to avoid the possibility of non-isolated attractor points. 

We briefly give some more context to the above conjecture for the interested reader; this discussion will play no role in the proof and may be skipped without consequence. 

First, we note that the failure of the Attractor Conjecture should quite reasonably have been expected. Let us return for a moment to the case of Calabi-Yau threefolds, where again the attractor condition is that the two-dimensional vector space $H^{3,0}(X) \oplus H^{0,3}(X)$ contains a vector in the integral lattice. If instead we impose the stronger condition that it contains a rank-two sublattice, known as the \emph{rank-two attractor} condition, then the standard conjectures tell us that we expect $H^{3,0} \oplus H^{0,3}$ to split off as a motive, a condition which moreover would take place at algebraically-defined points. So, rank-two attractor points should certainly be algebraic; by contrast, the  attractor condition itself is too weak to suggest any motivic splitting, and one should expect no particular algebraicity in general. 

Second, as remarked above, Moore also conjectures in~\cite{moore} not just algebraicity of the attractor points but also algebraicity of their periods. In fact, the paragraph above already suggests distrust of this conjecture as well: when we have a motive that splits off, for example an $H^{1,2}(X) \oplus H^{2,1}(X)$ motive of a (Tate-twisted) CM elliptic curve in the case when $b_3(X) = 4$, there is no reason to suspect that the period of said CM elliptic curve need always be algebraic. And indeed, in examples with such motivic splitting such as Klemm-Scheidegger-Zagier's study of the conifold point of the mirror quintic or the examples of~\cite{CdlOEvS}, said periods have (numerically) been found to agree with the expected special values of the appropriate $L$-function from the motivic splitting. Hence, morally, our main theorem should be thought of as analogous (or even \emph{mirror symmetric}) to proving transcendence of special values of $L$-functions.

\section{Dolgachev Calabi-Yau varieties}
\subsection{Defining the Calabi-Yaus}\label{subsection:dolgcys}
We will consider a family of Calabi-Yau varieties constructed as crepant resolutions of $n$-fold cyclic covers of projective space. Most of the discussion holds for any $n \ge 2$, although the case $n = 2$ is completely classical and returns the Legendre family of elliptic curves; we hence restrict to $n \ge 3$ simply for convenience. At a crucial point, we find that the cases $n = 3, 4, 6$ are distinguished, giving rise to ``arithmetic'' families of Calabi-Yau varieties (e.g., these are exactly the cases for which the resulting $\mc{M}_{\mathrm{CY}}$ is Shimura); all three of these hence display qualitatively different behavior and we note at the appropriate point where the condition $n \ne 3, 4, 6$ is crucial to have the ``nonarithmeticity'' phenomenon of the statement of our main Theorem~\ref{thm:main}.

As promised, we consider the family of Calabi-Yau varieties given by $n$-fold cyclic covers of $\mb{P}^{2n-3}$ branched along $2n$ hyperplanes; this construction is due to Dolgachev, and we first collect here some basic properties about the Hodge structure of such a Calabi-Yau. So, consider $2n$ points $x_1, \cdots, x_{2n}$ in $\mb{P}^1$. Then, we may certainly consider the curve $C$ given by the $n$-fold cyclic cover of $\mb{P}^1$ branched at those $2n$ points; more precisely, we mean the cover determined by the same $n$-cycle monodromy about each branch point in the base, or simply the smooth, projective curve $C$ whose affine model is given by 
\begin{equation}\label{eqn:affinemodel} 
C^{\circ} = \{y^n = \prod_{i = 1}^{2n} (x - x_i)\}.
\end{equation}
Then, by construction, $H^1(C, \mb{Q})$ has both a Hodge splitting and a $\mb{Z}/n$-action; in fact, these two structures are compatible. More precisely, if we let $\zeta = e^{2\pi i/n}$, we know that  $H^1(C, \mb{Q}(\zeta))$ splits into eigenspaces for the $\mb{Z}/n$-action. Let $\mu \in \mb{Z}/n$ be the generator which acts on $C$ by 
\[
y \mapsto \zeta y.
\]
 
\begin{defn}
Let $$H^1(C)[i] \subset H^1(C; \mb{Q}(\zeta))$$ be the sub $\mb{Q}(\zeta)$-vector space  such that $\mu$ acts by $\zeta^i$.
\end{defn}
 Then $H^1(C)[i]$ is a $\mb{Q}(\zeta)$-sub-Hodge structure of $H^1(C, \mb{Q}(\zeta))$ in the sense that we have the decomposition
$$H^1(C)[i] \underset{\mb{Q}(\zeta)}{\otimes} \mb{C} \simeq H^{1, 0}(C)[i] \oplus H^{0, 1}(C)[i]$$ 
given by  the Hodge splitting of $H^1(C; \mb{C})$.

\begin{prop}\label{prop:hodgenumberscurve}
The Hodge numbers of $H^1(C)[i]$ are given by $(2i - 1, 2(n-i)-1)$, for $1 \le i \le n - 1$; that is 
\[
\dim_{\mb{C}}H^{1,0}(C)[i]=2i-1, \ \dim_{\mb{C}}H^{0,1}(C)[i]=2(n-i)-1.
\]
\end{prop}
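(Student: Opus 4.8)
The plan is to compute the Hodge numbers of the eigenspaces $H^1(C)[i]$ by a standard ramification computation. First I would exploit the description of $C$ as the smooth projective model of the affine curve $C^\circ = \{y^n = \prod_{i=1}^{2n}(x-x_i)\}$, which carries the $\mathbb{Z}/n$-action $y\mapsto \zeta y$. The natural map $\pi\colon C\to\mathbb{P}^1$ is cyclic of degree $n$, totally ramified over the $2n$ branch points $x_1,\dots,x_{2n}$ (and unramified elsewhere, noting in particular that since $n\mid 2n$ the point at infinity is \emph{unramified}). I would decompose $\pi_*\mathcal{O}_C$ and more usefully $\pi_*\omega_C$ into eigen-line-bundles for the $\mathbb{Z}/n$-action, so that $H^{1,0}(C)[i]=H^0(C,\omega_C)[i]$ is identified with the global sections of an explicit line bundle on $\mathbb{P}^1$.

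Concretely, the eigenform computation goes as follows: a basis for the $\zeta^i$-eigenspace of $H^0(C,\omega_C)$ is given by differentials of the shape $x^a\, dx / y^{i}$ (or $y^{n-i}$, depending on the sign convention for how $\mu$ acts on forms) for an appropriate range of integers $a$; the range is cut out by requiring holomorphy at the ramification points and at infinity. Writing $f(x)=\prod_{i=1}^{2n}(x-x_i)$, one checks that near a branch point $x_j$ with local coordinate $t$, $y\sim t$ and $x-x_j\sim t^n$, so $dx/y^{i}$ has a zero/pole of the order dictated by $i$, giving the lower bound on $a$; holomorphy at infinity (using that $\deg f = 2n$ and that $\infty$ is unramified) gives the upper bound. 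Counting the lattice points in the resulting interval yields $\dim H^{1,0}(C)[i] = 2i-1$ for $1\le i\le n-1$. The count of $H^{0,1}(C)[i]$ then follows either by the same computation with the conjugate range, or more cleanly by noting that complex conjugation (together with the pairing) interchanges the $\zeta^i$ and $\zeta^{-i}=\zeta^{n-i}$ eigenspaces on the level of $(p,q)$-pieces, so $\dim H^{0,1}(C)[i] = \dim H^{1,0}(C)[n-i] = 2(n-i)-1$. As a sanity check, $\dim H^{1,0}(C)[i]+\dim H^{0,1}(C)[i] = 2n-2$ is independent of $i$ (each eigenspace $H^1(C)[i]$ has the same $\mathbb{Q}(\zeta)$-dimension $2n-2$), and summing $(2i-1)$ over $1\le i\le n-1$ gives $(n-1)^2$, which should match $g(C)$ as computed by Riemann–Hurwitz: $2g-2 = n(-2) + 2n(n-1)$, i.e. $g = (n-1)^2$. (One must be slightly careful that the trivial eigenspace $i=0$ contributes nothing to $H^1$, which is why the index runs only over $1\le i\le n-1$; this is because the $i=0$ part is $\pi^*H^1(\mathbb{P}^1)=0$.)

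The main obstacle — really the only subtlety — is bookkeeping the local contributions at the branch points and at infinity correctly, i.e. pinning down the exact range of exponents $a$ and making sure the convention for which character $\mu$ acts by on a given differential is consistent with the stated indexing $H^1(C)[i]$ (the answer is sensitive to replacing $i$ by $n-i$, so an off-by-sign error would swap the two Hodge numbers). I would handle this by working out one explicit branch point and the point at infinity in local coordinates very carefully, then invoking the standard Chevalley–Weil / Hurwitz-type formula for eigenspace dimensions of cyclic covers to get the general count, and finally cross-checking against the genus and against the known low cases ($n=3$: Hodge numbers $(1,3)$ and $(3,1)$; $n=4$, etc.) to fix all conventions.
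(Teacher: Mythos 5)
The paper itself gives no proof of this proposition: it simply cites \cite[Lemma 4.2]{looijenga} and remarks that the $i=0$ eigenspace is killed by $H^1(\mb{P}^1)=0$. What you have written out is exactly the computation behind that citation, and it is correct: the basis of eigendifferentials $x^a\,dx/y^j$, the observation that $\infty$ is unramified because $n\mid\deg\prod(x-x_i)=2n$, the local analysis at a branch point ($y\sim t$, $x-x_k\sim t^n$, $dx\sim t^{n-1}\,dt$) and at infinity (giving $0\le a\le 2j-2$), the count of $2j-1$ sections for each admissible $j$, and the Riemann--Hurwitz and genus cross-checks all hold. You also correctly identify the one genuine subtlety as bookkeeping the eigenvalue. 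To make it concrete: under the plain pullback action $\mu^*$ (with $\mu\colon y\mapsto\zeta y$) one has $\mu^*(x^a\,dx/y^j)=\zeta^{-j}\,x^a\,dx/y^j$, so the denominator $y^j$ lands these forms in $H^{1,0}(C)[n-j]$ and the direct count reads $\dim H^{1,0}(C)[i]=2(n-i)-1$; to match the proposition's labeling $\dim H^{1,0}(C)[i]=2i-1$ one must instead use the opposite generator of $\mb{Z}/n$ or the $(\mu^{-1})^*$ left-action convention. The paper is internally consistent with the latter (cf. the remark after Corollary 3.10 where $i=n-1$ produces $h^{2n-3,0}=1$), and your proposed cross-check at $n=3$ (a genus-$4$ curve with eigendifferentials $dx/y$ and $x^a\,dx/y^2$, $a=0,1,2$) is precisely the right way to nail the convention down.
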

We refer the reader to  \cite[Lemma 4.2]{looijenga} for this computation. Note that the  $i = 0$ eigenspace is trivial as  any   cohomology class in this space comes  from $H^1(\mb{P}^1) \simeq 0$ via pullback. 

\begin{constr} 
It remains to introduce the Calabi-Yau $(2n-3)$-folds $X$. Here we follow the treatment in \cite[Sections 2, 3]{sxzminimal}.  For a collection of $2n$ hyperplanes $(H_1, \cdots, H_{2n})$ of $\mb{P}^{2n-3}$, we say that they are in general position if no $2n-2$ of them intersect at any point; that is, there are no unexpected intersections between the $H_i$'s. Then we may define an $n$-fold cyclic cover $X'$ of $\mb{P}^{2n-3}$ branched along these hyperplanes. 

We give the rigorous construction here. For a line bundle $L$ on an arbitrary variety $Y$ and a positive integer $n$, consider the rank $n$ vector bundle 
\[
\mc{E}\defeq \mc{O}\oplus L^{\vee}\oplus \cdots \oplus (L^{\vee})^{\otimes n-1}
\]
on $Y$; here $L^{\vee}$ denotes the dual of $L$. Now given a section of $L^{\otimes n}$, or equivalently a  map
\[
(L^{\vee})^{\otimes n}\rightarrow\mc{O},
\]
we may define an algebra structure on $\mc{E}$ in the obvious way, and therefore  we may form the variety 
\[
X\defeq \Spec(\mc{E});\,
\]
and by construction $X$ admits a map to $Y$; in fact, this is a cyclic $n$-fold covering. In other words, a section $\sigma \in \Gamma(L^{\otimes n})$ defines a cyclic $n$-fold cover $X\rightarrow Y$.
\end{constr}
\begin{defn}\label{defn:dolgachevcy}
For a collection of $2n$ points $p_1, \cdots p_{2n}$ on $\mb{P}^1$, we may consider $2n$ hyperplanes on $\mb{P}^{2n-3}$ as follows: recall that there is an isomorphism
\begin{equation}\label{eqn:symmetricpower}
\Sym^{2n-3}\mb{P}^1\cong \mb{P}^{2n-3},
\end{equation}
and also that, for each $i=1, \cdots, 2n$, the set of points of the form $\{p_i\}\times \mb{P}^1\times\cdots \times \mb{P}^1$ on the left hand side of (\ref{eqn:symmetricpower}) gives a hyperplane on the right hand side. Therefore we obtain $2n $ hyperplanes $H_1, \cdots, H_{2n}$ on $\mb{P}^{2n-3}$, in general position.

 We  now apply the above construction  to $Y=\mb{P}^{2n-3}$, $L=\mc{O}(2)$, and $\sigma \in \Gamma(\mc{O}(2n))$ such that the zero locus of $\sigma$ is precisely 
\[
D\defeq \sum_{i=1}^{2n} H_i \subset \mb{P}^{2n-3}
\]
to obtain a cyclic $n$-fold covering of $\mb{P}^{2n-3}$, which we denote by $X'$.
\end{defn}
Note that a pleasant  computation shows that the canonical bundle of the cyclic cover $X'$ defined above is trivial: indeed, using the formula $K_{X}=\pi^*K_Y+R$ for a covering map $\pi:X\rightarrow Y$, where $R\subset X$ is the ramification divisor, we deduce that  
\begin{align*}
K_{X'}&=\pi^*((2-2n)H)+(2n)(n-1)H'\\
   &=(n(2-2n)+2n(n-1))H'\\
   &=0,
\end{align*}
and so the canonical class of $X'$ is trivial; here $H$ denotes a hyperplane class in $\mb{P}^{2n-3}$ and $H'$ is the class of one of the $n$ components of the pullback of $H$.

Denote the moduli space of collections $(H_1, \cdots, H_{2n})$ of hyperplanes of $\mb{P}^{2n-3}$ by $\mc{H}_n$. The following was proved by Sheng-Xu-Zuo \cite[Corollary 2.6]{sxzminimal}:

\begin{thm}\label{thm:crepant} 
Denote by $f': \mc{X}'\rightarrow \mc{H}_n$ the   family of $2n-3$-folds over the moduli of hyperplane arrangements constructed above. Then 
\begin{enumerate}
    \item there is a  family of smooth Calabi-Yau $(2n-3)$-folds 
    \[
    f:\mc{X}\rightarrow \mc{H}_n,
    \]
    as well as a commutative diagram
    \begin{equation}
    \begin{tikzcd}[column sep=small]
\mc{X} \arrow[rr, "\sigma"] \arrow[dr, "f"'] & & \mc{X}' \arrow[dl, "f'"]\\
& \mc{H}_n &
\end{tikzcd}
\end{equation}
where $\sigma$ is a simulatneous crepant resolution;

\item the middle degree Hodge structures of the families $\mc{X}'$ and $\mc{X}$ agree:
\[
R^{2n-3}f'_*\mb{Q}\cong R^{2n-3}f_*\mb{Q};
\]
\item 
furthermore, the family  $f$ is maximal in the sense that the Kodaira-Spencer map is an isomorphism at each point $p\in \mc{H}_n$.
\end{enumerate}
\end{thm}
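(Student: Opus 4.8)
The plan is to verify the three assertions by exhibiting the crepant resolution explicitly along the lines of Sheng-Xu-Zuo, reducing everything to local combinatorics of the hyperplane arrangement and then to the curve case. First I would analyze the singularities of the cyclic cover $X'$. Since the $2n$ hyperplanes $H_1,\dots,H_{2n}$ are in general position in $\mb{P}^{2n-3}$ (no $2n-2$ meeting at a point), the branch divisor $D = \sum H_i$ has only normal crossings away from strata where many hyperplanes meet; along a codimension-$k$ intersection $H_{i_1}\cap\cdots\cap H_{i_k}$ the cover $X'$ acquires a quotient singularity modeled on $\mb{A}^k/\mu_n$ acting diagonally. The key local claim is that each such cyclic quotient singularity admits a crepant resolution — this is where the special shape of the arrangement coming from $\Sym^{2n-3}\mb{P}^1 \cong \mb{P}^{2n-3}$ enters, ensuring the intersection combinatorics is exactly that of a ``braid-like'' arrangement so that the relevant toric resolutions can be chosen crepant and, crucially, \emph{simultaneously} over the whole family $\mc{H}_n$ (the singularity type does not jump as the $p_i$ vary, as long as they stay distinct). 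Assembling these local crepant resolutions gives the global $\sigma\colon \mc{X} \to \mc{X}'$ and the commutative triangle of (1); triviality of $K_{\mc{X}}$ then follows from triviality of $K_{\mc{X}'}$ (already computed in the text) together with crepancy, i.e.\ $K_{\mc{X}} = \sigma^* K_{\mc{X}'} = 0$.

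For (2), the statement is that $\sigma$ induces an isomorphism on middle-degree cohomology of the fibers. Since $\sigma$ is a crepant (in particular, small away from a high-codimension locus) resolution with fibers that are iterated quotient-singularity resolutions, the exceptional locus sits in codimension $\ge 2$ and its fibers have cohomology concentrated in degrees that do not reach the middle degree $2n-3$ of $\mc{X}'$; more precisely one runs the decomposition theorem (or a direct Mayer-Vietoris / Leray argument over the exceptional strata) to see that $R^{2n-3}\sigma_*\mb{Q} = \mb{Q}$ on $\mc{X}'$ and the higher direct images contribute nothing in degree $2n-3$, whence $R^{2n-3}f_*\mb{Q} \cong R^{2n-3}f'_*\mb{Q}$ as stated. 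Alternatively, since both $X$ and $X'$ have the same rational middle Hodge structure as (a Tate twist of a piece of) $H^1(C)^{\otimes}$-type data via the cyclic cover formalism, one can simply match Hodge numbers: Proposition~\ref{prop:hodgenumberscurve} pins down the eigenspace dimensions, and one checks $\sigma$ preserves them.

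For (3), maximality of the Kodaira-Spencer map, I would compute $H^1(X, TX)$ and compare it with $T_{p}\mc{H}_n$, both of dimension $\dim \mc{H}_n = 2n-3$ (the configuration space of $2n$ points on $\mb{P}^1$ modulo $\mathrm{PGL}_2$ has this dimension). The Kodaira-Spencer map is identified, via the cyclic cover structure and the crepant resolution, with the corresponding Kodaira-Spencer map for the family of curves $C = \{y^n = \prod(x-x_i)\}$ over the same base, twisted appropriately by the $\mb{Z}/n$-eigenspace decomposition; since the family of such cyclic covers of $\mb{P}^1$ is well known to be maximal (the period map for the relevant eigenspace is an immersion — indeed this is the classical fact underlying the Deligne-Mostow / Dolgachev ball-quotient story), one concludes the KS map for $f$ is an isomorphism at every $p$.

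The main obstacle I expect is (1): producing the crepant resolution and, more delicately, arguing it can be done \emph{simultaneously} in families — i.e.\ that the resolution is flat over $\mc{H}_n$ with the claimed Calabi-Yau fibers — since this requires controlling the toric geometry of the quotient singularities uniformly and checking crepancy of the chosen fan subdivisions. That said, this is precisely the content of \cite[Corollary 2.6]{sxzminimal}, so in the write-up I would cite that result for (1) and (2) and only supply the Kodaira-Spencer computation (3) in detail, or defer that too to \cite[Sections 2, 3]{sxzminimal}.
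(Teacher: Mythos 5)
Your proposal matches the paper exactly: the paper gives no proof of this theorem and simply attributes it to \cite[Corollary 2.6]{sxzminimal}, which is precisely what you conclude you would do. (One small caveat in your sketch, should you ever expand it: the local model over a codimension-$k$ stratum is the hypersurface $\{y^n = x_1\cdots x_k\}$, i.e.\ a quotient of $\mb{A}^k$ by the subgroup of $\mu_n^k$ with product one, not $\mb{A}^k/\mu_n$ with the diagonal action -- but since you defer the crepancy analysis to the citation this does not affect your write-up.)
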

\begin{defn}
We refer to the varieties constructed in Theorem \ref{thm:crepant} as the Dolgachev Calabi-Yau varieties. For $X'=f'^{-1}(p)$  the $(2n-3)$-fold parametrized by some $p\in \mc{H}_n$, we denote by $X\defeq f^{-1}(p)$ the corresponding  crepant resolution. 
\end{defn}
\begin{rmk}
This is a slight abuse of terminology since the crepant resolution  $\sigma : \mc{X}\rightarrow \mc{X}$ is not unique; on the other hand any two resolutions are of course birational to each other.
\end{rmk}

\subsection{Hodge structures of Dolgachev Calabi-Yaus}
Now we relate the curves constructed earlier as branched covers of $\mb{P}^1$ to the Calabi-Yau varieties constructed as covers of $\mb{P}^{2n-3}$. 
Recall from Theorem \ref{thm:crepant} that the cyclic cover $X'$ has a  crepant resolution $X$, which is Calabi-Yau and  we have an isomorphism
\[
H^{2n-3}(X; \mb{Q}) \simeq H^{2n-3}(X'; \mb{Q}).
\]
The right hand side  has a natural $\mb{Z}/n$-action since it arises as a cyclic cover, and therefore the left hand side does as well.  We may therefore  decompose $H^{2n-3}(X)$ into eigenspaces as follows. As in Section \ref{subsection:dolgcys} we fix a generator $\mu\in \mb{Z}/n$.
\begin{defn} 
We define  
$$H^{2n-3}(X)[i] \subset H^{2n-3}(X; \mb{Q}(\zeta))$$ 
as the sub-vector space over $\mb{Q}(\zeta)$ on which $\mu$ acts by $\zeta^i$.
\end{defn}
Then we have the crucial relationship between the Hodge structures of $C$ and $X$: 
\begin{lem}[{\cite[Proposition 2.2, Remark 2.2]{sxzminimal}}]\label{lemma:hodgestructurecy}
We have the following isomorphism of Hodge structures:
$$H^{2n-3}(X)[i] \simeq \bigwedge \! ^{2n-3} H^1(C)[i].$$ 
\end{lem}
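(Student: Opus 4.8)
The plan is to realize the isomorphism via the geometry of the cyclic cover $X' \to \mb{P}^{2n-3}$ and its relation to products of the curve $C$. First I would recall that the symmetric-power identification $\Sym^{2n-3}\mb{P}^1 \cong \mb{P}^{2n-3}$ of Definition \ref{defn:dolgachevcy} means that the hyperplane arrangement on $\mb{P}^{2n-3}$ is pulled back from the $2n$ points on $\mb{P}^1$; consequently the $n$-fold cyclic cover $X'$ is closely related to the $(2n-3)$-fold symmetric product of the curve $C$ (the $n$-fold cyclic cover of $\mb{P}^1$), or more precisely to a $\mb{Z}/n$-quotient of $C^{2n-3}$ compatible with the branch data. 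The key point is that both $X'$ and $\Sym^{2n-3}C$ sit over $\Sym^{2n-3}\mb{P}^1 = \mb{P}^{2n-3}$, and one checks that the middle cohomology of $X'$ is cut out of $H^*(C^{2n-3})$ — equivalently of $(H^1(C))^{\otimes(2n-3)}$ and its lower-weight companions — by a combination of the symmetric-group invariance (from passing to the symmetric product) and the eigenvalue condition for the diagonal $\mb{Z}/n$-action (from the cyclic cover structure).

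The second step is a bookkeeping argument in the representation theory of $\mb{Z}/n \times S_{2n-3}$ acting on $H^*(C^{2n-3})$. By the Künneth formula, $H^{2n-3}(C^{2n-3})$ contains the top exterior-type piece $(H^1(C))^{\otimes(2n-3)}$, and the $S_{2n-3}$-invariants of $H^1(C)^{\otimes(2n-3)}$, twisted appropriately by the sign character coming from the orientation of the symmetric product (the Koszul sign rule, since $H^1$ is odd), are exactly $\bigwedge^{2n-3}H^1(C)$. Restricting further to the $\zeta^i$-eigenspace for the diagonal action of $\mu \in \mb{Z}/n$ — which by compatibility of the two structures acts on $H^1(C)^{\otimes(2n-3)}$ factor-by-factor — singles out $\bigwedge^{2n-3}H^1(C)[i]$, since the diagonal action preserves the eigenspace decomposition $H^1(C) = \bigoplus_j H^1(C)[j]$ and one must track which multi-indices $(j_1,\dots,j_{2n-3})$ contribute. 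I would then invoke Lemma \ref{lemma:hodgestructurecy}'s source, namely \cite[Proposition 2.2, Remark 2.2]{sxzminimal}, or reprove the needed Künneth/invariants statement directly; one also uses Theorem \ref{thm:crepant}(2) to transport the conclusion from $X'$ to the smooth model $X$, and checks the assertion is an isomorphism of Hodge structures by noting all maps involved (Künneth, projection onto isotypic components, pullback along the cover) are morphisms of Hodge structures.

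The main obstacle I anticipate is the precise identification of $X'$ (or rather its contribution to middle cohomology) with the relevant piece of $\Sym^{2n-3}C$ or $C^{2n-3}/(\mb{Z}/n)$: the cyclic cover $X'$ is \emph{not} literally the symmetric product of $C$, because the branch divisor on $\mb{P}^{2n-3}$ has higher multiplicity structure along the diagonals and the line bundle is $\mc{O}(2)$ rather than $\mc{O}(1)$ — this is exactly why a crepant resolution is needed in Theorem \ref{thm:crepant}. So the careful part is showing that these discrepancies (singularities along diagonals, the factor of $2$) do not affect the middle-degree Hodge structure, so that the clean formula $\bigwedge^{2n-3}H^1(C)[i]$ nonetheless holds; this is where I would lean most heavily on the cited results of Sheng-Xu-Zuo and Looijenga rather than redo the resolution analysis. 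A consistency check is available via Proposition \ref{prop:hodgenumberscurve}: the Hodge numbers of $\bigwedge^{2n-3}H^1(C)[i]$ computed from $(\dim H^{1,0}[i], \dim H^{0,1}[i]) = (2i-1, 2(n-i)-1)$ should match the expected Hodge numbers of the $[i]$-eigenspace of the Calabi-Yau's middle cohomology, and in particular the $(2n-3,0)$-part should be one-dimensional for the eigenvalue $i$ with $2i - 1 = 2n-3$, i.e. $i = n-1$, consistent with $X$ being Calabi-Yau.
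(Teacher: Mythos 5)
The paper does not prove this lemma; it is cited from Sheng--Xu--Zuo \cite[Proposition 2.2, Remark 2.2]{sxzminimal} and used as a black box. So there is no internal proof to compare against — I will instead evaluate whether your sketch would serve as a proof.

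Your geometric picture is essentially the right one. The pullback of $X' \to \mb{P}^{2n-3}$ along $(\mb{P}^1)^{2n-3} \to \Sym^{2n-3}\mb{P}^1$ is the cover $\{y^n = f(x_1)\cdots f(x_{2n-3})\}$, which receives a dominant map from $C^{2n-3}$ via $(y_1,\dots,y_{2n-3}) \mapsto y_1\cdots y_{2n-3}$; combined with the further $S_{2n-3}$-quotient down to $\mb{P}^{2n-3}$, this exhibits $X'$ as birational to a quotient of $C^{2n-3}$ by the group $G = (\mb{Z}/n)^{2n-3} \rtimes S_{2n-3}$ modulo the residual $\mb{Z}/n$. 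Your Koszul-sign bookkeeping (that the signed $S_{2n-3}$-invariants of $(H^1)^{\otimes(2n-3)}$ are $\bigwedge^{2n-3} H^1$) is correct, as is your deferral to Theorem \ref{thm:crepant}(2) to pass from $X'$ to the smooth model, and your Hodge-number consistency check against Proposition \ref{prop:hodgenumberscurve} is exactly the right sanity test.

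One point where the sketch is imprecise and could mislead if taken literally: the $\mb{Z}/n$ acting on $X'$ is \emph{not} the diagonal $\mb{Z}/n \subset (\mb{Z}/n)^{2n-3}$. Rather, it is the quotient $(\mb{Z}/n)^{2n-3}/\ker(\text{sum})$, i.e. the character $\chi_i$ on $G$ is $(a_1,\dots,a_{2n-3},\sigma) \mapsto \zeta^{i\sum a_j}$. This matters: requiring $(H^1(C)[k_1]\otimes\cdots\otimes H^1(C)[k_{2n-3}])$ to transform by $\chi_i$ forces $k_1 = \cdots = k_{2n-3} = i$ (since $\sum a_j(k_j - i)$ must vanish identically), which is what singles out exactly the tensor product $(H^1(C)[i])^{\otimes(2n-3)}$ before antisymmetrizing. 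If you instead restrict only to the genuine diagonal copy of $\mb{Z}/n$, you would get the $\zeta^{(2n-3)k}$-eigenvalue and a sum over multi-indices with $\sum k_j \equiv i$, not the clean single summand. The other gap — that $C^{2n-3}/\ker(\text{sum})$ is only birational to $X'$, and one must argue the middle-degree Hodge structure is unaffected by the necessary (crepant) resolutions along the diagonals — you acknowledge and legitimately defer to the cited literature, which is consistent with the paper's own treatment of this lemma.
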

\begin{rmk}
Perhaps a more intrinsic way of phrasing the above lemma is that there is an isomorphism of $\mb{Q}$-Hodge structures with $\mc{A}\defeq \mb{Q}(X)/(X^n-1)$-action
\[
H^{2n-3}(X, \mb{Q})\cong \bigwedge_{\mc{A}}\! ^{2n-3}H^1(C, \mb{Q}).
\]
In other words, we view $H^1(C, \mb{Q})$ as an $\mc{A}$-module, where the generator $X\in \mc{A}$ acts through $\mu\in \mb{Z}/n$, and then we take its $(2n-3)\textsuperscript{rd}$ wedge power over $\mc{A}$.
\end{rmk}
We have the following simple 
\begin{cor}\label{cor:hodgestructurecy}
The Hodge structure $H^{2n-3}(X)\otimes \mb{Q}(\zeta)$ decomposes as a sum 
\[
H^{2n-3}(X)\otimes \mb{Q}(\zeta)\cong \bigoplus_{i=1}^{n-1} V_i
\]
where $V_i$ is a $\mb{Q}(\zeta)$-Hodge structure,  concentrated only in Hodge degrees 
\[
(p,q)=(2i-2, 2n-2i-1)\ \text{and}\  (2i-1, 2n-2i-2);
\]
furthermore the dimensions of these pieces of the Hodge decomposition are
\[
2i-1 \ \text{and} \  2n-2i-1,
\]
respectively.
\end{cor}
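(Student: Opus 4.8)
The plan is to deduce Corollary~\ref{cor:hodgestructurecy} directly from Lemma~\ref{lemma:hodgestructurecy} together with Proposition~\ref{prop:hodgenumberscurve}, purely by linear algebra on the level of filtered $\mb{Q}(\zeta)$-vector spaces. Set $V_i \defeq H^{2n-3}(X)[i] = \bigwedge^{2n-3} H^1(C)[i]$. Since $H^1(C)[i]$ is a $\mb{Q}(\zeta)$-Hodge structure of weight $1$ with $\dim H^{1,0}(C)[i] = 2i-1$ and $\dim H^{0,1}(C)[i] = 2(n-i)-1$, its total dimension is $(2i-1) + (2n-2i-1) = 2n-2$. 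Hence $\bigwedge^{2n-3}$ of a $(2n-2)$-dimensional space is again $(2n-2)$-dimensional, and the Hodge filtration on the wedge power is the induced one: the graded piece $\bigwedge^{p}H^{1,0}(C)[i] \otimes \bigwedge^{q}H^{0,1}(C)[i]$ sits in bidegree $(p, q)$ with $p + q = 2n-3$.

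The next step is to observe that, because $\dim H^{1,0}(C)[i] = 2i-1$, the factor $\bigwedge^{p}H^{1,0}(C)[i]$ vanishes unless $0 \le p \le 2i-1$, and similarly $\bigwedge^{q}H^{0,1}(C)[i]$ vanishes unless $0 \le q \le 2n-2i-1$. Combined with $p+q = 2n-3$, these inequalities force $p \in \{2i-2, 2i-1\}$ (equivalently $q \in \{2n-2i-1, 2n-2i-2\}$): indeed $p = (2n-3) - q \ge (2n-3) - (2n-2i-1) = 2i-2$ and $p \le 2i-1$. So $V_i$ is concentrated in exactly the two bidegrees $(2i-2, 2n-2i-1)$ and $(2i-1, 2n-2i-2)$ claimed. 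For the dimensions, one computes
\[
\dim \bigwedge^{2i-2} H^{1,0}(C)[i] \otimes \bigwedge^{2n-2i-1} H^{0,1}(C)[i] = \binom{2i-1}{2i-2}\binom{2n-2i-1}{2n-2i-1} = 2i-1,
\]
and symmetrically
\[
\dim \bigwedge^{2i-1} H^{1,0}(C)[i] \otimes \bigwedge^{2n-2i-2} H^{0,1}(C)[i] = \binom{2i-1}{2i-1}\binom{2n-2i-1}{2n-2i-2} = 2n-2i-1,
\]
matching the assertion. Finally, the decomposition $H^{2n-3}(X)\otimes\mb{Q}(\zeta) \cong \bigoplus_{i=1}^{n-1} V_i$ is just the eigenspace decomposition under the $\mb{Z}/n$-action (the $i = 0$ eigenspace being trivial, exactly as noted after Proposition~\ref{prop:hodgenumberscurve}, since $H^1(C)[0] = 0$ and hence its $(2n-3)$-rd wedge power vanishes), so nothing further is needed.

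There is no serious obstacle here; this is a bookkeeping corollary. The only point requiring a little care is making sure the Hodge filtration on an exterior power of a Hodge structure is the one induced functorially — i.e. that $F^p\bigwedge^{k} = \sum_{a \ge p}\bigwedge^{a}F^{1}\wedge \bigwedge^{k-a}(\cdot)$ in the appropriate sense — which is standard, and then chasing the two inequalities $0 \le p \le \dim H^{1,0}(C)[i]$ and $0 \le 2n-3-p \le \dim H^{0,1}(C)[i]$ to pin down the two surviving bidegrees. I would present the argument in exactly the order above: recall $V_i$ and the total dimension, identify the Hodge pieces of the wedge power, cut down to two bidegrees via the vanishing-of-exterior-powers constraint, and read off the binomial dimensions.
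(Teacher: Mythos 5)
Your proof is correct and follows essentially the same route as the paper: both start from $V_i = \bigwedge^{2n-3} H^1(C)[i]$ via Lemma~\ref{lemma:hodgestructurecy}, use the dimension data of Proposition~\ref{prop:hodgenumberscurve}, and observe that a $(2n-3)$-fold wedge of a $(2n-2)$-dimensional space concentrates in the two bidegrees obtained by dropping one holomorphic or one antiholomorphic basis vector. Your binomial-coefficient phrasing is just a slight notational variant of the paper's ``omit precisely one $e_i$ or one $f_j$'' counting.
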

\begin{proof}
Indeed, we define the Hodge structures $V_i$ to be  $\bigwedge^{2n-3}H^1(C)[i]$ from Lemma \ref{lemma:hodgestructurecy}. By  Proposition \ref{prop:hodgenumberscurve} we may write the Hodge decomposition of $H^1(C)[i]$ as 
\[
H^1(C)[i]\otimes \mb{C}\cong H^{1,0}\oplus H^{0,1},
\]
where we have omitted the dependence on $i$ on the right hand side, and 
\[
\dim H^{1,0}=2i-1, \ \dim H^{0,1} = 2(n-i)-1.
\]

For convenience let us pick a  basis $\{e_i\}$ (respectively  $\{f_j\}$) for $H^{1,0}$ (respectively $H^{0,1}$). Since the dimension of $H^1(C)[i]$ is $2n-2$, upon taking the $(2n-3)$-th wedge power, the only non-zero elements obtained by wedging together $e_i$'s and $f_j$'s must  omit precisely one $e_i$ or one $f_j$. Therefore the Hodge degrees of such an element are either 
\[
(p,q)=(2i-2, 2(n-1)-1) \ \text{or} \ (2i-1, 2(n-i)-2);
\]
furthermore there are $2i-1$ (respectively $2(n-i)-1$) choices of an $e_i$ (respectively $f_j$) to omit, and therefore the Hodge numbers are $2i-1$ (respectively $2n-2i-1$), as claimed.
\end{proof}
\begin{notation}\label{notation:hodge}
We will sometimes use the following piece of notation for bookkeeping when dealing with these Hodge numbers. We record the dimensions in a $(n-1)\times 2$ matrix 
\[
\begin{pmatrix}
\dim H^{1,0}(C)[1] & \dim H^{0,1}(C)[1]\\
\dim H^{1,0}(C)[2] & \dim H^{0,1}(C)[2]\\
\vdots & \vdots \\
\dim H^{1,0}(C)[n-1] & \dim H^{0,1}(C)[n-1]
\end{pmatrix}.
\]
For example, in the case $n=5$, it follows from Proposition  \ref{prop:hodgenumberscurve} that the above matrix is 
\[
\begin{pmatrix}
1 & 7 \\
3 & 5\\
5 & 3\\ 
7 & 1
\end{pmatrix}.
\]
\end{notation}
\begin{rmk} 
We therefore  verify from the case of $i = n-1$ that $h^{2n-3, 0} = 1$, as expected for a Calabi-Yau variety of dimension $2n-3$; moreover, we find $\dim \mc{M}_{\mathrm{CY}} = h^{2n-4, 1} = 2n-3$, which is notably the same as the dimension of the $\mc{M}_{0, 2n}$, the moduli of $2n$ points in $\mb{P}^1$. Indeed, this construction exactly accounts for the full moduli space of Calabi-Yau varieties so constructed, i.e. $$\mc{M}_{0, 2n} \overset{\sim}{\to} \mc{M}_{\mathrm{CY}}.$$ 
\end{rmk}
As a consistency check let us  see that the dimensions of  $\mc{M}_{0,2n}$ and $\mc{H}_n$ agree: indeed, each moduli space parametrizes  hyperplanes inside a projective space modulo the action of a projective linear group, and the coincidence of the dimensions is the equality
\[
(2n)-(3)=(2n-3)(2n)-((2n-2)^2-1);
\]
here the first term on each side of the equation is the number of moduli for the hyperplanes, while  the second term is the dimension of the projective linear group.
\begin{rmk}
Note that in the case when  $n$ is not a prime, for any Dolgachev Calabi-Yau variety $X$ there exists classes $\gamma\in H^{2n-3}(X, \mb{Q})$ with no component in $H^{2n-3}(X)[1]$: indeed, just take any element in $H^{2n-3}(X, \mb{Q}(\zeta))[i]$ for some $i$ not coprime to $n$, and take the sum of all of its Galois conjugates. In fact, when we take the parallel transport of such a  class $\gamma$, it will continue to have no component in  $H^{2n-3}(X)[1]$ for any $X$; this shows that the condition $\gamma^{2n-3,0}\neq 0$ condition is necessary in Conjecture \ref{conj:mooreattractor}.
\end{rmk}
\subsection{The attractor condition for Dolgachev Calabi-Yau varieties}
We are now finally in the position to study the attractor condition for Calabi-Yau varieties $X$ constructed as above. We first show that the attractor condition is equivalent to a condition on the periods of the associated curve $C$ as follows:

\begin{lem}\label{attractorcondition} The variety $X$ satisfies the attractor condition if and only if there exists $\omega \in H^{1, 0}(C)[1] \cap H^1(C)[1]$. \end{lem}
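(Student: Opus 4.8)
The plan is to unwind the attractor condition $\gamma \perp H^{1, 2n-4}(X)$ through the eigenspace decomposition of Corollary \ref{cor:hodgestructurecy}, and then match the resulting constraint against the Hodge numbers of the pieces $V_i = \bigwedge^{2n-3} H^1(C)[i]$. Since the Hermitian pairing makes distinct eigenspaces $V_i$ orthogonal and the $\gamma$ under consideration has $\gamma^{2n-3,0} \neq 0$, I would first argue that the relevant component of $\gamma$ is really controlled by the top piece $V_{n-1}$, whose Hodge numbers are $(2n-3,0)$ and $(2n-2, -1)$... wait, more precisely by Corollary \ref{cor:hodgestructurecy} the piece $V_i$ lives in bidegrees $(2i-2, 2n-2i-1)$ and $(2i-1, 2n-2i-2)$, so $H^{1, 2n-4}(X)$ lands entirely in $V_1$ (taking $i=1$, giving $(0, 2n-3)$ and $(1, 2n-4)$) and $H^{2n-4, 1}(X)$ lands entirely in $V_{n-1}$.

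The key steps, in order: (1) Decompose $\gamma \in H^{2n-3}(X, \mathbb{Q})$ over $\mathbb{Q}(\zeta)$ into components $\gamma_i \in V_i$, noting $\gamma_1$ and $\gamma_{n-1}$ are complex-conjugate up to the Galois action since $\gamma$ is rational. (2) Observe that $H^{1,2n-4}(X) = H^{1,2n-4}(C)[1] \subset V_1$ (the $(1,2n-4)$-graded piece of $V_1$), which under Lemma \ref{lemma:hodgestructurecy} corresponds, inside $\bigwedge^{2n-3} H^1(C)[1]$, to the line obtained by wedging all of $H^{0,1}(C)[1]$ with all but one vector of $H^{1,0}(C)[1]$ — but since $\dim H^{1,0}(C)[1] = 1$ by Proposition \ref{prop:hodgenumberscurve}, this graded piece is one-dimensional, spanned by $\bigwedge H^{0,1}(C)[1]$, i.e. it is the image of $H^{0,1}(C)[1]$ under the canonical duality $\bigwedge^{2n-3} W \cong W^\vee \otimes \det W$ for $W = H^1(C)[1]$ of dimension $2n-2$. (3) Translate $\gamma \perp H^{1,2n-4}(X)$: the pairing of $\gamma$ against this line is, via the wedge-power identification, exactly the pairing of the component of $\gamma$ in $V_1$ against $H^{0,1}(C)[1]$; so the attractor condition says precisely that the $V_1$-component of $\gamma$, viewed back in $H^1(C)[1]$ via the duality, is orthogonal to $H^{0,1}(C)[1]$, hence lies in $H^{1,0}(C)[1]$. (4) Conversely, given $0 \neq \omega \in H^{1,0}(C)[1] \cap H^1(C)[1]$ — that is, a rational class in the eigenspace that happens to be of type $(1,0)$ — build $\gamma$ by taking the corresponding element of $\bigwedge^{2n-3} H^1(C)[1]$ (wedge $\omega$ into a basis) together with its Galois conjugates to land in $H^{2n-3}(X,\mathbb{Q})$, and check the orthogonality holds piece by piece.

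The main obstacle I anticipate is the careful bookkeeping in step (2)–(3): one must correctly identify which graded piece of $\bigwedge^{2n-3} H^1(C)[i]$ carries $H^{1,2n-4}(X)$, confirm it is one-dimensional (this is exactly where $\dim H^{1,0}(C)[1] = 1$, a consequence of $i=1$ in Proposition \ref{prop:hodgenumberscurve}, is used), and verify that the wedge-power duality converts the Hermitian pairing on $\bigwedge^{2n-3} H^1(C)[1]$ into the Hermitian pairing on $H^1(C)[1]$ up to a nonzero scalar, so that ``$\perp H^{1,2n-4}(X)$'' genuinely becomes ``$\perp H^{0,1}(C)[1]$ inside $H^1(C)[1]$.'' A secondary subtlety is tracking the Galois/rationality descent: the statement phrases the condition as the existence of $\omega \in H^{1,0}(C)[1] \cap H^1(C)[1]$ where the intersection is taken inside $H^1(C;\mathbb{Q}(\zeta))$, so I should be careful that ``rational class of Hodge type $(1,0)$ in the $\zeta$-eigenspace'' is the right object and that producing a genuine integral/rational $\gamma$ on $X$ from it only requires summing Galois conjugates — which is automatic since the eigenspace decomposition is defined over $\mathbb{Q}(\zeta)$ and $\gamma^{2n-3,0} \neq 0$ forces the $V_{n-1}$-component (hence by conjugation the $V_1$-component) to be nonzero.
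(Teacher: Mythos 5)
Your overall strategy is the same as the paper's, and the converse direction (step 4) matches. But the forward direction has a concrete error in steps (2)--(3) that would break the argument as written.

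You claim that $H^{1,2n-4}(X)$, the $(1,2n-4)$-graded piece of $V_1=\bigwedge^{2n-3}H^1(C)[1]$, is one-dimensional and equal to $\bigwedge^{2n-3}H^{0,1}(C)[1]$. This contradicts Corollary~\ref{cor:hodgestructurecy}, which you yourself invoke: for $i=1$ the $(1,2n-4)$ piece has dimension $2n-2i-1=2n-3$, while the $(0,2n-3)$ piece has dimension $2i-1=1$. Concretely, with $\dim H^{1,0}(C)[1]=1$ and $\dim H^{0,1}(C)[1]=2n-3$, the $(1,2n-4)$ piece is $H^{1,0}\otimes\bigwedge^{2n-4}H^{0,1}$, which is $(2n-3)$-dimensional; the one-dimensional line $\bigwedge^{2n-3}H^{0,1}(C)[1]$ is $H^{0,2n-3}(X)$, not $H^{1,2n-4}(X)$. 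This matters because the attractor condition $\gamma\perp H^{1,2n-4}(X)$ imposes $2n-3$ constraints, exactly enough --- once one uses that the Hermitian pairing is nondegenerate (indeed definite) on the Hodge piece $H^{1,2n-4}(X)$ --- to force $\gamma_1$ to lie in the complementary one-dimensional piece $H^{0,2n-3}(X)$. With your mislabeling, ``orthogonal to this line'' would be a single condition, leaving $\gamma_1$ in a $(2n-3)$-dimensional space and not pinning down a $\mathbb{Q}(\zeta)$-rational line at all. The nondegeneracy of the Hermitian pairing on $H^{1,2n-4}(X)$ is the key step you leave implicit, and the paper spells it out.

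A secondary issue is the phrase ``viewed back in $H^1(C)[1]$ via the duality.'' The canonical identification is $\bigwedge^{2n-3}W\cong W^\vee\otimes\det W$ with $W=H^1(C)[1]$, which lands you in $W^\vee$, not $W$. There is no $\mathbb{Q}(\zeta)$-rational identification $W^\vee\cong W$: the intersection pairing restricted to $H^1(C)[1]\times H^1(C)[1]$ vanishes (it pairs $H^1(C)[1]$ with $H^1(C)[n-1]$), and the Hermitian form is only conjugate-linear. The paper stays in $W^\vee\otimes\det W$ throughout: once $\gamma_1$ shows $H^{0,2n-3}(X)$ and hence (by orthogonal complement under the Hermitian pairing on $H^{2n-3}(X)[1]$) $H^{1,2n-4}(X)$ are defined over $\mathbb{Q}(\zeta)$, it uses the identifications $H^{0,2n-3}(X)\simeq (H^{1,0})^\vee\otimes\det W$ and $H^{1,2n-4}(X)\simeq(H^{0,1})^\vee\otimes\det W$ (with $\det W$ automatically $\mathbb{Q}(\zeta)$-rational) to conclude that the Hodge splitting of $W^\vee$, and hence of $W$, is defined over $\mathbb{Q}(\zeta)$, producing $\omega$. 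Your step (4) and the observation that $\gamma^{2n-3,0}\neq 0$ forces $\gamma_1\neq 0$ are both correct and in the spirit of the paper.
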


Indeed, recall that $H^{1, 0}(C)[1]$ is one-dimensional, so the above condition is that the above subspace of $H^1(C)[1]$, {\it a priori} only defined once one tensors up to $\mb{C}$, is in fact defined over $\mb{Q}(\zeta)$. 

\begin{proof} Suppose $X$ satisfies the attractor condition. Then there exists $\gamma \in H^{2n-3}(X; \mb{Z})$ orthogonal to $H^{2n-4, 1}(X)$; equivalently, $\gamma$ is orthogonal to $H^{1, 2n-4}(X)$. Recall from the discussion above that $H^{1, 2n-4}(X)$ is contained within the $H^{2n-3}(X)[1]$ eigenspace. The distinct $\mu$-eigenspaces are certainly orthogonal under the intersection pairing, and if $\gamma_i \in H^{2n-3}(X)[i]$ denotes the summand of $\gamma$ under the decomposition of $H^{2n-3}(X; \mb{Q}(\zeta))$, the above condition is equivalent to $\gamma_1$ orthogonal to $H^{1, 2n-4}(X)$. As the Hermitian pairing is perfect on $H^{1, 2n-4}(X)$, $\gamma$ cannot have any support within said Hodge summand of $$H^{2n-3}(X)[1]_{\mb{C}} \simeq H^{1, 2n-4}(X) \oplus H^{0, 2n-3}(X),$$ and so we must have $\gamma_1 \in H^{0, 2n-3}(X)$. But $\gamma_1$ was defined as an element of the vector space $H^{2n-3}(X)[1]$, a vector space defined over $\mb{Q}(\zeta)$, and so both $H^{0, 2n-3}(X)$ and $H^{1, 2n-4}(X)$, as the orthogonal complement of $H^{0, 2n-3}(X)$ under the intersection pairing restricted to $H^{2n-3}(X)[1]$, are defined over $\mb{Q}(\zeta)$. But then \begin{eqnarray*} H^{1, 2n-4}(X) &\simeq& \wedge\!^{2n-2} H^{0, 1}(C)[1] \otimes H^{1, 0}(C)[1] \\ &\simeq& \Big(H^{0, 1}(C)[1]\Big)^{\vee} \otimes \Big( \det H^{0, 1}(C)[1] \Big) \otimes H^{1, 0}(C)[1] \\ &\simeq& \Big(H^{0, 1}(C)[1] \Big)^{\vee} \otimes \det H^1(C)[1] \end{eqnarray*} as a subspace of $$H^{2n-3}(X)[1] \simeq \wedge\!^{2n-3} H^1(C)[1] \simeq \Big(H^1(C)[1]\Big)^{\vee} \otimes \det H^1(C)[1].$$ Above, we use the notation $\det V = \wedge^{\dim V} V$ and the isomorphism $$\wedge\!^{\dim V - 1} V \simeq V^{\vee} \otimes \det V.$$ In any case, we have that the decomposition $$H^{2n-3}(X)[1]_{\mb{C}} \simeq H^{1, 2n-4}(X) \oplus H^{0, 2n-3}(X)$$ is isomorphic to the decomposition $$\Big( \Big(H^1(C)[1]\Big)^{\vee} \otimes \det H^1(C)[1] \Big)_{\mb{C}} \simeq \Big(  \Big(H^{0, 1}(C)[1] \Big)^{\vee} \otimes \det H^1(C)[1] \Big) \oplus \Big(  \Big(H^{1, 0}(C)[1] \Big)^{\vee} \otimes \det H^1(C)[1] \Big)$$ induced from the Hodge splitting of $H^1(C)[1]_{\mb{C}}$. As $H^1(C)[1]$ and hence $\det H^1(C)[1]$ are defined over $\mb{Q}(\zeta)$, however, the condition that the first decomposition be defined over $\mb{Q}(\zeta)$ is equivalent to the condition that the second decomposition be defined over $\mb{Q}(\zeta)$, which in particular implies that there exists some $\omega \in H^{1, 0}(C)[1] \cap H^1(C)[1]$.

Conversely, given such an $\omega$, we have that the subspace $H^{1, 0}(C)[1] \subset H^1(C)[1]_{\mb{C}}$ is in fact defined over $\mb{Q}(\zeta)$ and hence so is $H^{0, 1}(C)[1]$ as its orthogonal complement; as above, the decomposition $H^{2n-3}(X)[1]_{\mb{C}} \simeq H^{1, 2n-4}(X) \oplus H^{0, 2n-3}(X)$ is then also defined over $\mb{Q}(\zeta)$. Then take some $\gamma_1 \in H^{0, 2n-3}(X)$ defined over $\mb{Q}(\zeta)$ so that by construction, $\gamma_1$ is orthogonal to $H^{1, 2n-4}(X)$, and consider the Galois conjugates $\gamma_i$ under the action of $\Gal(\mb{Q}(\zeta)/\mb{Q})$ on $H^{2n-3}(X; \mb{Q}(\zeta))$. These Galois conjugates will lie within the $H^{2n-3}(X)[i]$ eigenspaces for values of $i$ coprime to $n$ and thereby be concentrated in Hodge summands away from the $(1, 2n-4)$ and $(0, 2n-3)$ summands, so that if we now define $\gamma = \sum_i \gamma_i$, the Galois-theoretic construction will give us $\gamma \in H^{2n-3}(X; \mb{Q})$ while its summand in the $H^{2n-3}(X)[1]$ eigenspace is still the original $\gamma_1$ we started with. As such, scaling $\gamma$ as necessary so it in fact lies in $H^{2n-3}(X, \mb{Z})$, we have produced some integral cohomology class orthogonal to $H^{1, 2n-4}(X)$, or equivalently $H^{2n-4, 1}(X)$, as desired. \end{proof}

\subsection{Algebraicity of the associated curve}
In this section we show that the algebraicity of the Dolagchev Calabi-Yau variety implies that of the curve associated to it. Therefore  to show that Dolgachev Calabi-Yau varieties provide counterexamples to the Attractor Conjecture it suffices to show that for the attractor varieties, the assocaited curves are not defined over $\overline{\mb{Q}}$.

\begin{prop} $X$ is defined over $\overline{\mb{Q}}$ if and only if $C$ is defined over $\overline{\mb{Q}}$. \end{prop}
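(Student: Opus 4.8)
The plan is to exploit the fact that both $X$ and $C$ are built functorially out of the same configuration of $2n$ points on $\mb{P}^1$, so that algebraicity of either variety should be equivalent to algebraicity of that point configuration in $\mc{M}_{0,2n}(\qbar)$. Concretely, the Dolgachev Calabi-Yau $X'$ is the cyclic $n$-fold cover of $\mb{P}^{2n-3} \cong \Sym^{2n-3}\mb{P}^1$ branched along the hyperplanes $H_1,\dots,H_{2n}$ attached to points $p_1,\dots,p_{2n}$, and its crepant resolution $X$ is obtained by a blow-up procedure that can be carried out over any field of definition of $X'$; conversely $C = \{y^n = \prod (x-x_i)\}$ is cut out by equations with coefficients in any field containing the $x_i$. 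So the cleanest route is: (i) $C$ defined over $\qbar$ $\iff$ the point $[p_1,\dots,p_{2n}] \in \mc{M}_{0,2n}$ is defined over $\qbar$; (ii) $X$ defined over $\qbar$ $\iff$ that same point is defined over $\qbar$; and then chain the two equivalences.

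For the direction ``point algebraic $\Rightarrow$ both $X$ and $C$ algebraic'' there is essentially nothing to do: if the $x_i$ can be chosen in $\qbar$ (equivalently, the cross-ratios lie in $\qbar$), then \eqref{eqn:affinemodel} has $\qbar$-coefficients, the hyperplane arrangement $D = \sum H_i \subset \mb{P}^{2n-3}$ is defined over $\qbar$, the cyclic cover $X' = \rSpec(\mc{E})$ from the Construction is defined over $\qbar$ since $\mc{E}$ and the algebra structure descend, and a crepant resolution $X \to X'$ exists over $\qbar$ as well — one either invokes that resolutions of a fixed toroidal-type singularity are defined over the base field, or simply notes that the moduli map $\mc{M}_{0,2n} \overset{\sim}{\to} \mc{M}_{\mathrm{CY}}$ of the Remark is an isomorphism of $\qbar$-varieties, so a $\qbar$-point on one side gives a $\qbar$-point on the other. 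The more substantive direction is the converse: from algebraicity of the \emph{abstract} variety $X$ (or $C$) one must recover algebraicity of the moduli point. For $C$ this is standard: a smooth projective curve over $\mb{C}$ that is isomorphic to one defined over $\qbar$ has a $\qbar$-rational point in $\mc{M}_g$, and since $C$ carries a faithful $\mb{Z}/n$-action with quotient $\mb{P}^1$ and $2n$ branch points, the induced configuration in $\mc{M}_{0,2n}$ (well-defined up to the finite ambiguity of labeling and automorphisms) is $\qbar$-rational. For $X$ one argues the same way via the Torelli-type statement implicit in Theorem~\ref{thm:crepant}(3): maximality of the family means the period map $\mc{H}_n \to \mc{M}_{\mathrm{CY}}$ is a local isomorphism, and the construction $\mc{M}_{0,2n}\overset{\sim}{\to}\mc{M}_{\mathrm{CY}}$ shows the isomorphism class of $X$ as an abstract Calabi-Yau determines, and is determined by, the point of $\mc{M}_{0,2n}$; hence $X$ algebraic forces that point to be algebraic.

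I would organize the write-up around the single intermediary ``the configuration $[p_1,\dots,p_{2n}]\in \mc{M}_{0,2n}$ is defined over $\qbar$'' and prove each of the two biconditionals against it, rather than arguing directly between $X$ and $C$. The main obstacle — and the only place needing genuine care — is the ``abstract variety algebraic $\Rightarrow$ moduli point algebraic'' implication, i.e. a descent/field-of-moduli argument: one must know that $X$ (resp. $C$) being abstractly isomorphic to a $\qbar$-variety implies its point in the relevant moduli space (or moduli stack) is $\qbar$-rational, and that passing from $X$ back to the hyperplane arrangement, or from $C$ back to its branch locus, does not leave the algebraic world. For $C$ this is classical; for $X$ one leans on Theorem~\ref{thm:crepant} together with the identification $\mc{M}_{0,2n}\cong\mc{M}_{\mathrm{CY}}$, and on the fact that the $\mb{Z}/n$-action on $H^{2n-3}(X)$ — hence the decomposition into the $H^{2n-3}(X)[i]$ and the reconstruction of $H^1(C)[1]$ from Lemma~\ref{lemma:hodgestructurecy} — is algebraic, so that $C$ (or at least its moduli point) is recovered from $X$ by an algebraic procedure. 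Once that descent point is in hand, the rest is the routine bookkeeping sketched above.
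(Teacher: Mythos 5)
Your overall architecture (route everything through the moduli point $[p_1,\dots,p_{2n}]\in\mc{M}_{0,2n}$) is a reasonable organizing principle, and you correctly identify that the sole delicate point is the ``abstract variety algebraic $\Rightarrow$ moduli point algebraic'' implication. But for the $X$-side you do not actually close that gap: you appeal to the isomorphism $\mc{M}_{0,2n}\overset{\sim}{\to}\mc{M}_{\mathrm{CY}}$ as if it were already known to be an isomorphism of $\overline{\mb{Q}}$-varieties whose inverse is $\overline{\mb{Q}}$-rational, and invoke ``the Torelli-type statement implicit in Theorem \ref{thm:crepant}(3).'' Neither of these does the work. Part (3) of that theorem (Kodaira--Spencer is an isomorphism) is a \emph{local} Torelli statement; it does not tell you that the abstract isomorphism type of $X$ determines, via an algebraic recipe, a $\overline{\mb{Q}}$-rational point of $\mc{M}_{0,2n}$. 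And asserting that $\mc{M}_{0,2n}\to\mc{M}_{\mathrm{CY}}$ is a $\overline{\mb{Q}}$-isomorphism with $\overline{\mb{Q}}$-rational inverse is essentially equivalent to the proposition you are trying to prove, so the argument is circular as written. (There is also a field-of-moduli versus field-of-definition subtlety lurking when you pass through a coarse moduli space, which your sketch does not confront.)

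What the paper actually does for the hard direction is a concrete reconstruction argument that sidesteps any appeal to a global moduli isomorphism: since $h^{2,0}(X)=0$, $\mathrm{Pic}\,X\simeq H^2(X;\mb{Z})$ is a discrete $\overline{\mb{Q}}$-scheme, so the line bundle $\mc{L}=\pi^*\mc{O}(1)$ is automatically defined over $\overline{\mb{Q}}$; then one proves (via a pushforward computation $\alpha_*\mc{L}\cong\bigoplus\mc{O}(1-2j)$ on $\mb{P}^{2n-3}$, together with algebraic Hartogs for the crepant resolution) that the complete linear system of $\mc{L}$ \emph{is} the morphism $X\to\mb{P}^{2n-3}$. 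This is the genuinely new input: it exhibits the cover map as canonically attached to $X$ alone. From there the ramification divisor and hence the $2n$ hyperplanes, and finally the $2n$ points on $\mb{P}^1$, are recovered over $\overline{\mb{Q}}$. Your sketch has no substitute for this step. Similarly, for the easier direction you wave at the $\mb{Z}/n$-action being inherited over $\overline{\mb{Q}}$ without justification; the paper gives a short but necessary ``spreading out'' argument (an automorphism of $C$ defined over a transcendental extension would specialize to infinitely many automorphisms, contradicting finiteness of $\mathrm{Aut}(C)$) that you should include. In short: the skeleton is fine, but the key reconstruction lemma -- identifying the map to $\mb{P}^{2n-3}$ as the complete linear system of a canonically determined line bundle -- is missing, and without it the ``$X$ algebraic $\Rightarrow$ $C$ algebraic'' direction does not go through.
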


\begin{proof} We first do the easier direction, so suppose $C$ is defined over $\overline{\mb{Q}}$. Let $\mu: C \to C$ 
denote a generator of the $\mb{Z}/n$ action, which must also be defined over $\overline{\mb{Q}}$: we spell out this argument here as we will use its basic idea (``spreading out'') frequently. So, consider $\mu$ as a point of the quasiprojective $\overline{\mb{Q}}$-scheme $\mathrm{Aut}(C)$. If the field of definition $K$ of $\mu$ is larger than $\overline{\mb{Q}}$, and in particular contains some pure transcendental extension thereof, we may freely specialize that transcendental variable to produce a family of automorphisms of $C$, but any curve has only finitely many automorphisms. Hence $\mu$ must have been defined over $\overline{\mb{Q}}$. But now the morphism $C \to C / \mu \simeq \mb{P}^1$ is defined over $\overline{\mb{Q}}$, and so the $2n$ points $x_1, \cdots, x_{2n} \in \mb{P}^1$ of ramification are defined over $\overline{\mb{Q}}$ (after an appropriate automorphism of $\mb{P}^1$). But then it is clear that the cover $X'$ and all the blow-up centers within $X'$ are defined over $\overline{\mb{Q}}$, and hence so is $X$. 

The more interesting direction is the reverse argument, where we begin by supposing that $X$ is defined over $\overline{\mb{Q}}$. The morphism $X \to \mb{P}^{2n-3}$ corresponds to some line bundle $\mc{L} \in \mathrm{Pic}\,X$ given by the pullback of $\mc{O}(1)$, but note that $\mathrm{Pic}\,X \simeq H^2(X; \mb{Z})$ is simply a discrete set of points as a scheme over $\overline{\mb{Q}}$, and hence all points must be defined over $\overline{\mb{Q}}$. 
\begin{claim}
The complete linear system of $\mc{L}$ defines precisely the morphism $X\rightarrow \mb{P}^{2n-3}$.
\end{claim}
\begin{proof}
It suffices to show that the pullback map induces an isomorphism 
\[
\Gamma(X, \mc{L})\cong \Gamma(\mb{P}^{2n-3}, \mc{O}(1)).
\]
Recall that  we have the factorization 
\[
X\xrightarrow{\sigma} X'\xrightarrow{\alpha} \mb{P}^{2n-3},
\]
where $\sigma$ is the crepant resolution from Theorem \ref{thm:crepant}, and 
\[
\alpha: X'\rightarrow \mb{P}^{2n-3}
\]
denotes  the $n$-fold covering  of $\mb{P}^{2n-3}$  from Definition \ref{defn:dolgachevcy}. We first show that 
\[
\Gamma(X', \mc{L}')\cong \Gamma(\mb{P}^{2n-3}, \mc{O}(1)),
\]
where $\mc{L}'\defeq \alpha^*\mc{O}(1)$.  By construction of $X'$ (see Definition \ref{defn:dolgachevcy} and the paragraph preceding it), 
\[
\alpha_*\mc{O}_{X'}\cong \mc{O}_{\mb{P}^{2n-3}}\oplus L^{\vee}\oplus \cdots \oplus (L^{\vee})^{\otimes n-1},
\]
where $L^{\vee}\cong \mc{O}(-2)$. Therefore
\[
\alpha_*\mc{L}\cong \mc{O}_{\mb{P}^{2n-3}}(1)\oplus \mc{O}_{\mb{P}^{2n-3}}(-1)\oplus \cdots \oplus \mc{O}_{\mb{P}^{2n-3}}(-2n+3),
\]
and hence $\Gamma(X', \mc{L}')=\Gamma(\mb{P}^{2n-3}, \mc{O}(1))$ as required. On the other hand, $X$ is obtained from $X'$ by blowing up along subvarieties of codimension at least two, and we claim 
\[
\Gamma(X, \mc{L})\cong \Gamma(X', \mc{L}')
\]
as well. Indeed, as $X$ and $X'$ fail to be isomorphic only in codimension two, this statement would follow from (algebraic) Hartogs' Lemma provided $X$ and $X'$ are both normal. That $X$ is normal follows from its smoothness, while $X'$ is normal as it is both $R_1$ and $S_2$. Indeed, its singular set has codimension two, while it is $S_2$ given its construction as a hypersurface in a smooth ambient variety (the total space of a line bundle over $\mb{P}^{2n-3}$). 
\end{proof}
Hence $\mc{L}$, and its linear system $X \to \mb{P}^{2n-3}$ is defined over $\overline{\mb{Q}}$, and so we learn that the ramification locus with irreducible components the $2n$ hyperplanes $H_1, \cdots, H_{2n}$ may be taken to be defined over $\overline{\mb{Q}}$ -- i.e. are defined over $\overline{\mb{Q}}$ after, possibly, an application of some $PGL_{2n-2}$ projective transformation to their original definition as corresponding to the points $x_i$. However, this condition is precisely the same as that the original $2n$ points may be taken to be defined over $\overline{\mb{Q}}$, i.e. possibly after some $PGL_2$ transform, or equivalently that all their cross-ratios are in $\overline{\mb{Q}}$, and so it is then easy to reconstruct $C$ over $\overline{\mb{Q}}$. Indeed, the map from the $2n$ points to the $2n$ hyperplanes (or $2n$ points in a dual projective space) may be regarded as a morphism between (open loci of) $\mathrm{Sym}^{2n-3} \mb{P}^1 / S_3 \to \mb{P}^{2n-3} / S_{2n-2}$ (by using the simple $3$- or $2n-1$-transitivity of the $PGL_2$- and $PGL_{2n-2}$-actions, respectively) which is explicitly defined over $\overline{\mb{Q}}$. Indeed, one may write down this map in explicit coordinates: we refer the reader to \cite[Claim 3.6]{sxzminimal}.  






\end{proof}

\subsection{Attractors are dense}\label{subsection:dense}
In this section  we show  that the attractor points are Zariski dense in moduli space. This fact  will be used when we apply the Zilber-Pink conjecture.

We define the auxiliary space
\[
\mathcal{M}_{0,2n}^{'} \defeq \{ (s, \omega)| s\in \mc{M}_{0,2n},\ \omega \in H^{1,0}(C_s)[1],\ \omega \neq 0\}
\]
where we have denoted by $C_s$ the $n$-fold cover of $\mb{P}^1$ branched at the configuration of $2n$ points given by $s \in \mc{M}_{0,2n}$. Recall that $H^{1,0}(C_s)[1]$ is a one dimensional and hence  $\mathcal{M}_{0,2n}^{'}$ is a $\mb{G}_m$-bundle over moduli space. Also let $\widetilde{\mathcal{M}}_{0,2n}^{'}$ denote the universal cover of $\mathcal{M}_{0,2n}^{'}$; on this universal cover we have a well defined basis of the cohomology group $H^1(C, \mb{Q}(\zeta_n))[-1]$ which we denote by $\gamma_1, \cdots , \gamma_{2n-2}$.

We may now consider the so-called Schwarz map defined as follows:
\begin{align*}
\pi: \widetilde{\mathcal{M}}_{0,2n} &\rightarrow \mb{C}^{2n-2}\\
(s, \omega) & \mapsto   \Big(\int_{\gamma_1}\omega , \cdots ,\int_{\gamma_{2n-2}}\omega \Big).
\end{align*}

Now by Lemma \ref{attractorcondition} we have that a point $(s,\omega) \in \widetilde{\mathcal{M}}_{0,2n}$ is an attractor point (more precisely the point $s$ gives rise to an attractor CY and $\omega$ witnesses this) if and only if $\pi((s,\omega))$ has coordinates in $\mb{Q}(\zeta) \subset \mb{C}$. Now note that $\pi$ is a holomorphic  local homeomorphism, and so the image $\pi(\widetilde{\mathcal{M}}_{0,2n})$ contains some open ball inside $\mb{C}^{2n-2}$. Since $\mb{Q}(\zeta_n) \subset \mb{C}$ is dense, we have that the attractors are topologically dense, and hence Zariski dense as well. To summarize we have the following:

\begin{prop}
The attractor points are Zariski dense in the moduli space $\mc{M}_{\mathrm{CY}}.$
\end{prop}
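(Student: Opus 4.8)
The plan is to exhibit the attractor locus as the preimage of a dense subset of $\mb{C}^{2n-2}$ under the multivalued period map (Schwarz map), and then appeal to the open mapping theorem. First I would make precise the auxiliary $\mb{G}_m$-bundle $\mc{M}_{0,2n}^{'} \to \mc{M}_{0,2n}$ whose fiber over $s$ is $H^{1,0}(C_s)[1]\setminus\{0\}$; since $\dim H^{1,0}(C_s)[1] = 1$ by Proposition~\ref{prop:hodgenumberscurve}, this is indeed a rank-one punctured line bundle and in particular is a connected complex manifold of dimension $2n-2$. Passing to the universal cover $\widetilde{\mc{M}}_{0,2n}^{'}$ trivializes the local system $H^1(C,\mb{Q}(\zeta_n))[1]$, so one obtains a well-defined basis of cycles $\gamma_1,\dots,\gamma_{2n-2}$ and hence a genuine (single-valued) holomorphic period map $\pi$ to $\mb{C}^{2n-2}$, $(s,\omega)\mapsto \bigl(\int_{\gamma_1}\omega,\dots,\int_{\gamma_{2n-2}}\omega\bigr)$.

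Next I would record the translation of the attractor condition into this language: by Lemma~\ref{attractorcondition}, $C_s$ underlies an attractor Calabi-Yau precisely when there is a nonzero $\omega \in H^{1,0}(C_s)[1]$ whose class lies in the $\mb{Q}(\zeta_n)$-structure $H^1(C_s)[1]$, which — after choosing the integral/rational basis $\gamma_i$ — says exactly that $\pi(s,\omega)$ has all coordinates in $\mb{Q}(\zeta_n)$. So the attractor locus in $\widetilde{\mc{M}}_{0,2n}^{'}$ is $\pi^{-1}\bigl(\mb{Q}(\zeta_n)^{2n-2}\bigr)$. The key geometric input is that $\pi$ is a local biholomorphism: this is the classical nondegeneracy of the Schwarz map for these cyclic covers (it is an immersion because the Kodaira–Spencer/infinitesimal period map is injective, which is part of Theorem~\ref{thm:crepant}(3), together with the $\mb{G}_m$-direction accounting for scaling $\omega$), and since source and target have the same dimension $2n-2$ it is an open map. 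Hence $\pi\bigl(\widetilde{\mc{M}}_{0,2n}^{'}\bigr)$ contains an open ball $B\subset\mb{C}^{2n-2}$, and $\mb{Q}(\zeta_n)^{2n-2}\cap B$ is dense in $B$ because $\mb{Q}(\zeta_n)$ is dense in $\mb{C}$.

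From here the conclusion is formal: $\pi^{-1}(B)$ is open and nonempty in $\widetilde{\mc{M}}_{0,2n}^{'}$, and $\pi^{-1}\bigl(\mb{Q}(\zeta_n)^{2n-2}\bigr)\cap\pi^{-1}(B) = \pi^{-1}\bigl(\mb{Q}(\zeta_n)^{2n-2}\cap B\bigr)$ is dense in $\pi^{-1}(B)$ since $\pi$ is a local homeomorphism (a dense subset of the base pulls back to a dense subset of any evenly-covered open set, hence of $\pi^{-1}(B)$). Pushing forward along the covering map $\widetilde{\mc{M}}_{0,2n}^{'}\to\mc{M}_{0,2n}^{'}$ and then along the $\mb{G}_m$-bundle projection $\mc{M}_{0,2n}^{'}\to\mc{M}_{0,2n}\cong\mc{M}_{\mathrm{CY}}$ — both of which are open surjections — shows the attractor points contain a subset with nonempty interior in $\mc{M}_{\mathrm{CY}}$ that is dense there in the analytic topology; in particular they are Zariski dense. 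The only point requiring genuine care, rather than bookkeeping, is the assertion that $\pi$ is a local biholomorphism; I would justify it by citing the maximality of the family in Theorem~\ref{thm:crepant}(3) (equivalently the injectivity of the derivative of the Schwarz map, which is standard for these Deligne–Mostow type families), noting that the extra $\mb{G}_m$-coordinate corresponding to rescaling $\omega$ exactly makes up the one missing dimension so that domain and codomain both have dimension $2n-2$.
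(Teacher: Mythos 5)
Your proof is correct and follows essentially the same route as the paper's: you build the same auxiliary $\mb{G}_m$-bundle $\mc{M}_{0,2n}'$, pass to its universal cover to get a single-valued Schwarz map $\pi$, translate the attractor condition via Lemma~\ref{attractorcondition} into $\pi$ having coordinates in $\mb{Q}(\zeta_n)$, and conclude from openness of $\pi$ together with density of $\mb{Q}(\zeta_n)\subset\mb{C}$. You supply a bit more justification than the paper (which merely asserts that $\pi$ is a local homeomorphism) by invoking the maximality of the family from Theorem~\ref{thm:crepant}(3) plus the extra $\mb{G}_m$ scaling direction; the one small wording slip is the phrase ``the attractor points contain a subset with nonempty interior,'' which should instead say they are \emph{dense in} some nonempty open set (the attractor locus is a countable union of discrete sets, never open), but this does not affect the conclusion of Zariski density.
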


\section{Reduction to Shimura theory}
\subsection{Algebraic attractors split off CM abelian varieties}
In this subsection we show that if an attractor is algebraic, then the Jacobian of the corresponding curve $C$ must split off CM factors.

We make use of the following theorem of Shiga-Wolfart [ref]\cite[Proposition 3]{shigawolfart}, a consequence of the analytic subgroup theorem of W\"ustholz [ref]:

\begin{thm}[Shiga-Wolfart]\label{shigawolfart}  Suppose $A$ is an abelian variety over $\overline{\mb{Q}}$ endowed with $\omega \in \Gamma(A, \Omega^1_A)_{\overline{\mb{Q}}}$ such that for any two classes $\gamma_1, \gamma_2 \in H^1(A; \mb{Z})$, we have that the period ratios are algebraic:
\[
\frac{\langle \omega, \gamma_1 \rangle}{\langle \omega, \gamma_2 \rangle} \in \overline{\mb{Q}}.
\]
Then $A$ has complex multiplication. Moreover, if $K$ is the number field generated by the period ratios above then the CM field of $A$ is precisely $K$. \end{thm}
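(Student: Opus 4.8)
\emph{This is the theorem of Shiga--Wolfart; the plan is to deduce it from the analytic subgroup theorem of W\"ustholz, which provides the only genuinely transcendental input, with everything else being formal Hodge theory and the structure theory of endomorphism algebras.}

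I would first reduce to the case where $A$ is simple. If $A$ is isogenous over $\overline{\mb{Q}}$ to $\prod_i A_i^{n_i}$ with the $A_i$ simple, then $H^1(A;\mb{Q})$ and $\Gamma(A,\Omega^1_A)_{\overline{\mb{Q}}}$ split compatibly with the isogeny and the period pairing is block-diagonal, so the hypothesis restricts to each block on which $\omega$ is nonzero; moreover the hypothesis as stated (period ratios are finite and algebraic) forces every period $\langle\omega,\gamma\rangle$ to be nonzero, hence $\omega$ is nonzero on every simple factor, and $A$ is CM precisely when each factor is. (In the application $A$ is itself a simple factor of $\Jac C$, so this reduction is harmless and the ``CM field of $A$'' is unambiguous.) Having reduced to $A$ simple of dimension $g$, I normalize: since $\omega\neq 0$ the periods span $\mb{C}$, so I fix a cycle $\gamma_0$ with $\Omega\defeq\langle\omega,\gamma_0\rangle\neq 0$; the hypothesis becomes that $\langle\omega,\gamma\rangle/\Omega\in\overline{\mb{Q}}$ for every $\gamma\in H^1(A;\mb{Z})$, and these ratios generate the number field $K$ of the statement.

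The core of the argument is to feed this into the analytic subgroup theorem. Recall $\Gamma(A,\Omega^1_A)=\mathrm{Lie}(A)^\vee$ over $\overline{\mb{Q}}$ and that each $\gamma\in H^1(A;\mb{Z})$ gives a period vector $p(\gamma)\in\mathrm{Lie}(A)\otimes\mb{C}$ with $\exp_A(p(\gamma))=0\in A(\overline{\mb{Q}})$ and $\langle\omega,p(\gamma)\rangle=\langle\omega,\gamma\rangle$; note that as $\gamma$ runs over a basis of $H^1(A;\mb{Z})$ the vectors $p(\gamma)$ already span $\mathrm{Lie}(A)\otimes\mb{C}$ over $\mb{C}$. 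I would then work with a commutative algebraic group over $\overline{\mb{Q}}$ built from copies of $A$ and of $\mb{G}_a$ --- the minimal illustrative case is $G=A\times A\times\mb{G}_a$, whose exponential is $\exp_A\times\exp_A\times\mathrm{id}$ --- and consider the points $u=(p(\gamma),p(\gamma'),\langle\omega,\gamma\rangle/\Omega)\in\mathrm{Lie}(G)\otimes\mb{C}$: because the last coordinate is algebraic, $\exp_G(u)=(0,0,\langle\omega,\gamma\rangle/\Omega)\in G(\overline{\mb{Q}})$. The analytic subgroup theorem then says that the smallest $\overline{\mb{Q}}$-algebraic subgroup $H\subseteq G$ whose Lie algebra contains all these $u$'s is cut out by genuine $\overline{\mb{Q}}$-linear relations; projecting $H$ to the two copies of $A$ and using that $A$ is simple (so connected subgroups of $A\times A$ are the obvious ones together with graphs $\Gamma_\alpha$ of $\alpha\in\End^0(A)$), and using the $\mb{G}_a$-coordinate to prevent $H$ from being all of $G$, one extracts from such relations elements of $\End^0(A)$ --- concretely, $\overline{\mb{Q}}$-rational endomorphisms of $\mathrm{Lie}(A)$ carrying the period lattice into itself up to $\mb{Q}$. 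Iterating over enough pairs $(\gamma,\gamma')$, the hypothesis that the whole span of the $\langle\omega,\gamma\rangle$ is one-dimensional forces $\End^0(A)$ to act on $\mathrm{Lie}(A)$ with every eigenspace one-dimensional; for a simple abelian variety this forces $\End^0(A)$ to be a CM field of degree $2g$ acting by a CM type, i.e. $A$ has complex multiplication. For the final sentence, observe that such an $\omega$ must be an eigenvector for the action of $K=\End^0(A)$ on $\Gamma(A,\Omega^1_A)$, say through an embedding $\sigma_0\colon K\hookrightarrow\overline{\mb{Q}}$; then $\langle\omega,k\cdot\gamma\rangle=\sigma_0(k)\langle\omega,\gamma\rangle$ shows that the field generated by the period ratios is exactly $\sigma_0(K)\cong K$, identifying it with the CM field of $A$.

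The main obstacle is precisely the third paragraph: choosing the auxiliary commutative group and the Lie-algebra points so that the algebraicity of the period ratios becomes a hypothesis of the analytic subgroup theorem, and then correctly reading off from the resulting minimal subgroup not merely one endomorphism but a whole degree-$2g$ field of CM type --- this is where the bookkeeping relating $\overline{\mb{Q}}$-linear period relations to endomorphisms lives, and where one may in fact need several copies of $\mb{G}_a$ (or a vectorial extension) rather than one. By contrast, the transcendence is entirely black-boxed in the analytic subgroup theorem, and the reduction to the simple case, the passage from ``one-dimensional eigenspaces'' to ``CM field'', and the identification of $K$ are all standard.
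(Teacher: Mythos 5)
The paper does not actually prove this statement: it is quoted verbatim as \cite[Proposition 3]{shigawolfart}, with only the remark that the final sentence (identifying the CM field with $K$) is extracted from the Shiga--Wolfart proof rather than their statement. So you are supplying a proof of something the paper uses as a black box. Your overall architecture --- reduce to $A$ simple, feed period relations into W\"ustholz's analytic subgroup theorem applied to products of $A$, classify the algebraic subgroups of $A\times A$ via simplicity as graphs of elements of $\End^0(A)$, and then count to force $\dim_{\mb{Q}}\End^0(A)=2g$, hence CM --- is indeed the strategy of Shiga--Wolfart, and your identification of $K$ at the end (that $\omega$ is an eigenvector for the CM field acting through one embedding $\sigma_0$, so the period ratios generate exactly $\sigma_0(K)$) is the same observation the paper makes.

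However, the central construction as you set it up does not work. Taking $G=A\times A\times\mb{G}_a$ and $u=(p(\gamma),p(\gamma'),\langle\omega,\gamma\rangle/\Omega)$, the analytic subgroup theorem only yields a \emph{proper} subgroup $H$ if you can exhibit a proper $\overline{\mb{Q}}$-subspace of $\Lie G$ containing $u$; the $\mb{G}_a$-coordinate cannot ``prevent $H$ from being all of $G$,'' because the only way to tie it to the abelian coordinates would be a functional like $\omega(\cdot)-\Omega\cdot(\cdot)$, which is not defined over $\overline{\mb{Q}}$ since $\Omega$ is (expected to be) transcendental. The relation you actually have is $\omega(p(\gamma))-\alpha\,\omega(p(\gamma'))=0$ with $\alpha=\langle\omega,\gamma\rangle/\langle\omega,\gamma'\rangle\in\overline{\mb{Q}}$, and this lives entirely in $A\times A$. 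So the correct move is: set $G=A\times A$, $u=(p(\gamma),p(\gamma'))$, note $\exp_G(u)=0\in G(\overline{\mb{Q}})$ and $u$ lies in the $\overline{\mb{Q}}$-hyperplane $\ker(\pr_1^*\omega-\alpha\,\pr_2^*\omega)$; the minimal algebraic subgroup $H$ with $u\in\Lie H_{\mb{C}}$ is then proper, is not a factor (since $p(\gamma),p(\gamma')\neq 0$ and $A$ is simple), hence is the graph of some $\phi\in\End^0(A)$ with $d\phi(p(\gamma))=p(\gamma')$ and $\phi^*\omega=\alpha^{-1}\omega$. Fixing $\gamma=\gamma_0$ and letting $\gamma'$ run over a basis of $H_1(A,\mb{Z})$, the resulting $\phi$'s generate a $\mb{Q}$-subalgebra of the division algebra $\End^0(A)$ whose orbit of $\gamma_0$ spans $H_1(A,\mb{Q})$, forcing $\dim_{\mb{Q}}\End^0(A)\geq 2g$; the general bound $\dim_{\mb{Q}}\End^0(A)\mid 2g$ for $A$ simple then gives equality, which by Albert's classification means $\End^0(A)$ is a CM field of degree $2g$. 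No $\mb{G}_a$ factor (nor any vectorial extension) is needed, since only periods of the first kind appear. You correctly flagged this step as the crux; the fix above is a repair of your sketch rather than a different proof.
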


The last sentence above is not part of the proposition cited but follows from their  proof, which uses the analytic subgroup theorem to directly construct endomorphisms of $A$ from the hypothesized period relations. 
More generally, it follows that if $A$ splits in the isogeny category as some product of abelian varieties $A_i$, then for all $A_i$ on which the $\omega$ above is supported (i.e. restricts to nontrivially), $A_i$ has complex multiplication by some (possibly varying with $i$) subfield of $K$. Applied to the case above, we have the immediate consequence:
\begin{prop}\label{man}If $X$ as above satisfies the attractor condition, then $\Jac C$ has a summand $A$ in the isogeny category such that the following  conditions hold:
\begin{enumerate}
    \item $\omega$ restricts non-trivially to $A$;
    \item $A$ has complex multiplication (in the isogeny category) by $\mb{Q}(\zeta).$
\end{enumerate}
\end{prop}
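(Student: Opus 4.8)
The plan is to deduce Proposition~\ref{man} from the Shiga--Wolfart theorem (Theorem~\ref{shigawolfart}) applied to $\Jac C$, by first producing from the attractor hypothesis a $1$-form $\omega$ on $\Jac C$ all of whose period ratios are algebraic, and then tracking the role of the $\mb{Q}(\zeta)$-action. First I would invoke Lemma~\ref{attractorcondition}: since $X$ is an attractor, there exists a nonzero $\omega \in H^{1,0}(C)[1] \cap H^1(C)[1]$, i.e. the line $H^{1,0}(C)[1]$ is rational over $\mb{Q}(\zeta)$. I also need to use the hypothesis that $X$ (equivalently $C$, by the Proposition in \S3.4) is defined over $\qbar$; although the statement of~\ref{man} as written only says ``satisfies the attractor condition'', the standing assumption in this subsection is that we are arguing by contradiction from the algebraicity of attractors, so I take $C/\qbar$ as given and will say so. Then $H^1(C)[1]$ is a $\mb{Q}(\zeta)$-sub-Hodge structure whose de Rham realization $H^1_{\dR}(C/\qbar)[1]$ is defined over $\qbar$ (as an eigenspace of an automorphism defined over $\qbar$), and inside it the Hodge line $H^{1,0}(C)[1]$ is $\mb{Q}(\zeta)$-rational by the attractor condition; so after scaling we get a class $\omega$ in $\Gamma(C, \Omega^1_C) \otimes \overline{\mb Q}$ lying in $H^1(C)[1]$.

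Next I would check the period-ratio hypothesis of Theorem~\ref{shigawolfart}. The point is that $\omega$ is a $\overline{\mb Q}$-rational algebraic differential (a $\qbar$-point of $H^{1,0}(C)$, which is a $\qbar$-vector space since $C/\qbar$), so the ratio $\langle \omega,\gamma_1\rangle/\langle\omega,\gamma_2\rangle$ of two of its periods is \emph{not} automatically algebraic --- that is exactly the content one must extract. What is true is: $\omega$ lies in the one-dimensional space $H^{1,0}(C)[1]$, and the attractor condition says this line, together with the whole eigenspace $H^1(C)[1]$, is defined over $\mb Q(\zeta) \subset \qbar$ as a sub-Hodge structure; hence the $2(n-1)$-dimensional Betti lattice piece pairs with the single de Rham class $\omega$ in a way that... here I realize the cleanest route is slightly different. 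Rather than pass through period ratios of $\omega$ directly on $\Jac C$, I would use that $H^1(C)[1]$ is a polarized $\mb Q(\zeta)$-Hodge structure of weight $1$ with $\dim_{\mb Q(\zeta)} = 2(n-1)$ and Hodge numbers $(1, 2n-3)$ (Proposition~\ref{prop:hodgenumberscurve} with $i=1$), so it is the $H^1$ of an abelian variety $A$ (up to isogeny) with an action of $\mb Q(\zeta)$, and $A$ is a $\qbar$-isogeny factor of $\Jac C$ since the eigenspace decomposition is defined over $\qbar$. On $A$ the form $\omega$ spans $H^{1,0}(A)_{\mb Q(\zeta)}$-rationally a $\qbar$-line, and the attractor condition forces the full period matrix of $A$ against $\omega$ to have all entries algebraic multiples of each other: concretely, since $H^{1,0}(A)[1]$ is $\qbar$-rational inside $H^1_{\dR}(A)_{\qbar}$ and the comparison isomorphism with $H^1_B(A,\mb Q(\zeta))$ is what it is, the single de Rham class $\omega$ has algebraic coordinates in a $\qbar$-basis of $H^1_{\dR}$, and pairing against the $\mb Q(\zeta)$-rational Betti classes $\gamma_j$... this is the step I expect to require the most care, namely converting ``the Hodge filtration line is $\qbar$-rational'' into ``period ratios of $\omega$ are algebraic,'' and it is precisely here that one must be honest that Theorem~\ref{shigawolfart} is being applied to $A$ (or $\Jac C$) with this $\omega$.

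Granting that, I would finish as follows. By Theorem~\ref{shigawolfart} applied to $\Jac C$ with the class $\omega$ (which restricts nontrivially to the factor $A$ and trivially to a complementary factor, by construction of $\omega$ as lying in the eigenspace $[1]$), every simple isogeny factor of $\Jac C$ on which $\omega$ is supported has CM, and the ``moreover'' clause pins the CM field down: since $\omega$ lives in $H^1(C)[1]$, which is a $\mb Q(\zeta)$-vector space of $\mb Q(\zeta)$-dimension $2(n-1)$ realizing $H^1(A)$, the number field generated by the period ratios contains $\mb Q(\zeta)$ --- indeed the $\mb Q(\zeta)$-structure on the periods is visible because $\mu$ acts on $\omega$ by $\zeta$ and on its periods $\langle\omega,\gamma_j\rangle$ compatibly. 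So $A$ has CM by a field containing $\mb Q(\zeta)$; comparing dimensions ($2\dim A = 2(n-1) = [\mb Q(\zeta):\mb Q]$ when $n$ is prime, which is the case of interest, since $\varphi(n) = n-1$) shows the CM field is exactly $\mb Q(\zeta)$, giving conditions (1) and (2). The main obstacle, to repeat, is the middle step: rigorously extracting the algebraicity of period ratios of $\omega$ from the $\mb Q(\zeta)$-rationality of the Hodge line provided by Lemma~\ref{attractorcondition} together with the algebraicity of $C$ --- everything else is bookkeeping with the eigenspace decomposition and an appeal to the cited transcendence theorem.
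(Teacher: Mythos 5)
Your overall strategy matches the paper's: combine Lemma~\ref{attractorcondition} with the (standing) hypothesis that $C$ is defined over $\qbar$, feed the resulting form $\omega$ into Theorem~\ref{shigawolfart}, and pin down the CM field by a dimension count. But the step you yourself flag as requiring the most care --- extracting algebraicity of the period ratios of $\omega$ --- is exactly the step you do not complete, and the direction you drift into is the wrong one. The $\qbar$-rationality of the Hodge line inside $H^1_{\dR}(C/\qbar)$ is automatic for any curve over $\qbar$ and yields nothing about periods. The content of Lemma~\ref{attractorcondition} is \emph{Betti} rationality: $\omega$ lies in $H^1(C;\mb{Q}(\zeta))$, i.e.\ it is a $\mb{Q}(\zeta)$-linear combination of integral cohomology classes, so for every $\gamma\in H_1(C,\mb{Z})$ the period $\langle\omega,\gamma\rangle$ is \emph{literally} an element of $\mb{Q}(\zeta)$. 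Rescaling $\omega$ by the single transcendental factor needed to make it a $\qbar$-point of $\Gamma(C,\Omega^1_C)$ does not change period ratios, so the hypothesis of Shiga--Wolfart holds with all ratios in $\mb{Q}(\zeta)$. You touched this idea (``the $2(n-1)$-dimensional Betti lattice piece pairs with...'') and then abandoned it; you should not have.

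Your concluding dimension count is also wrong. You apply the theorem to an abelian variety $A$ realizing all of $H^1(C)[1]$ (essentially the Prym, of dimension $(n-1)\phi(n)$, or $2(n-1)$ by your count), and then assert $2\dim A = 2(n-1) = [\mb{Q}(\zeta):\mb{Q}] = n-1$ for $n$ prime, which is false, and the variety realizing $H^1(C)[1]$ cannot have CM by $\mb{Q}(\zeta)$ anyway since its eigenspaces are far from one-dimensional. The paper instead takes $A_1$ to be the \emph{simple} isogeny factor of $\Jac C$ on which $\omega$ restricts nontrivially. Since $H^1(A_1,\mb{Q})$ is a $\mb{Q}$-sub-Hodge structure it contains all $\phi(n)$ Galois conjugates of $\omega$, forcing $\dim A_1\ge \phi(n)/2$; Shiga--Wolfart (with the ``moreover'' clause) gives CM by a subfield of $\mb{Q}(\zeta)$, forcing $2\dim A_1\le\phi(n)$; equality then shows the CM field is exactly $\mb{Q}(\zeta)$. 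You need both inequalities, and the object they apply to is the simple factor supporting $\omega$, not the full eigenspace.
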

\begin{proof}
Let $A_1$ denote the simple abelian variety, which is a summand of $A$ in the isogeny category, on which $\omega$ restricts non-trivially. Then the Hodge structure of $A_1$ is a $\mb{Q}$-sub-Hodge structure of $H^1(A, \mb{Q})$, and hence must contain all the Galois conjuagtes of $\omega$. Since $\omega$ lives in $H^1(A)[1]$ on which $\mu$ acts by a primitive root of unity,  we have 
\[
\dim A_1\geq \phi(n)/2.
\]
On the other hand, since $X$ satisfies the attractor condition, all the periods 
\[
\langle \omega , \gamma\rangle \ \text{for} \ \gamma\in H^1(A_1, \mb{Q})
\]
are contained in $\mb{Q}(\zeta)$, and applying  Theorem \ref{shigawolfart} to $A_1$ and $\omega$ we deduce that $A_1$ has complexmultiplication by a subfield of $K=\mb{Q}(\zeta)$, and by the inequality on the dimension above we conclude that $A_1$ has complex multiplication by $K$, as required.  
\end{proof}

In fact, it is possible to be more precise still in this case: the endomorphisms constructed from the W\"{u}stholz analytic subgroup theorem commute with the $\mb{Z}/n$ cyclic action 
and so each CM abelian variety $A_i$ produced from the Shiga-Wolfart argument continues to respect the $\mb{Z}/n$-equivariant structure. But $\omega$ is the unique holomorphic form in its eigenspace, up to scaling, and so there can only be one $A_i$ upon which $\omega$ is supported. 

The theory of complex multiplication now tells us that there is some finite list of abelian varieties $A$, up to isogeny, with CM  related by  $\mb{Q}(\zeta)$ as above. The dimension of $A$ is $\frac{1}{2} \dim [\mb{Q}(\zeta):\mb{Q}] = \frac{1}{2} \phi(n)$, and so we find that the following characterization of the attractor points in $\mc{M}_{\mathrm{CY}}$:

\begin{thm} \label{thm:splittingcm}
The attractor points for the Dolgachev family of CYs considered here are exactly the intersection of $\mc{M}_{\mathrm{CY}} \simeq \mc{M}_{0, 2n}$ with the Hecke translates of the sub-Shimura varieties above of $\mc{A}_{(n-1)^2}$ under $\mc{M}_{0, 2n} \to \mc{M}_{(n-1)^2} \to \mc{A}_{(n-1)^2}.$ \end{thm}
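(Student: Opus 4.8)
The plan is to package the results already established into the Shimura-theoretic dictionary. First I would set up the ambient Shimura datum: the local system $R^1 f_* \mb{Q}$ over $\mc{M}_{0,2n}$, where $f$ is the universal curve $C \to \mb{P}^1$, has as its associated period map a morphism $\mc{M}_{0,2n} \to \mc{M}_{(n-1)^2}$ (the moduli of principally polarized-up-to-the-relevant-level genus $(n-1)^2$ curves, or rather their Jacobians), followed by the Torelli map $\mc{M}_{(n-1)^2} \to \mc{A}_{(n-1)^2}$; here $(n-1)^2 = g(C)$ by the Riemann--Hurwitz computation implicit in Proposition~\ref{prop:hodgenumberscurve}. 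Then $\mc{A}_{(n-1)^2}$ is the Shimura variety attached to $\GSp_{2(n-1)^2}$, and I would record that the image of $\mc{M}_{0,2n}$ lands inside the sub-Shimura variety $\Sh$ cut out by the $\mb{Z}/n$-action and its eigenspace Hodge numbers from Proposition~\ref{prop:hodgenumberscurve} — i.e.\ the PEL-type locus of abelian varieties with $\mb{Z}[\zeta_n]$-multiplication of the prescribed signature — but this refinement is inessential for the present statement and I would relegate it to the next section.

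Next I would translate Proposition~\ref{man} and the subsequent paragraph. The key point already proven is: $X$ (equivalently $(s,\omega)$) is an attractor point if and only if $\Jac C_s$ admits, in the isogeny category, a decomposition $A_1 \times A_2$ with $\omega$ supported on the single simple factor $A_1$, and $A_1$ has CM by $\mb{Q}(\zeta_n)$ with $\dim A_1 = \tfrac12\phi(n)$. Conversely — and this direction I need to spell out — if $\Jac C_s$ has such a CM factor $A_1$ with $\omega$ supported there, then Lemma~\ref{attractorcondition} is satisfied: the CM structure forces $H^{1,0}(C_s)[1]$, which sits inside $H^1(A_1)\otimes\mb{C}$, to be spanned by a class defined over $\mb{Q}(\zeta_n)$, since a CM abelian variety has its Hodge filtration (in each CM-type eigenline) defined over the reflex field, hence over $\qbar$, and more precisely the $\zeta_n$-eigenline is $\mb{Q}(\zeta_n)$-rational. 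So the attractor locus is exactly the set of $s$ for which $\Jac C_s$ has a CM factor isogenous to one of the finitely many abelian varieties $A$ with CM by $\mb{Q}(\zeta_n)$ of the relevant CM-type.

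Then I would phrase "has a factor isogenous to a fixed $A$" as an intersection-with-Hecke-translates condition. Inside $\mc{A}_{(n-1)^2}$, for each such fixed CM abelian variety $A$ (of dimension $\tfrac12\phi(n)$), the locus of principally polarized abelian varieties $B$ that split as $A' \times C'$ in the isogeny category with $A'$ isogenous to $A$ is precisely the union, over all isogeny classes of complementary factors and all polarizations, of Hecke translates of the image of $\{A\} \times \mc{A}_{(n-1)^2 - \tfrac12\phi(n)} \hookrightarrow \mc{A}_{(n-1)^2}$ — a sub-Shimura variety $\Sh_A$ of $\mc{A}_{(n-1)^2}$, since $\{A\}$ is a special (CM) point and the product of a special point with a Siegel space is a sub-Shimura datum. (This is the standard fact that splitting conditions in the isogeny category are Hecke-orbit conditions on sub-Shimura loci.) The attractor points in $\mc{M}_{\mathrm{CY}} \simeq \mc{M}_{0,2n}$ are then, under the composite map to $\mc{A}_{(n-1)^2}$, the preimage of $\bigcup_A (\text{Hecke translates of } \Sh_A)$, intersected with the image of $\mc{M}_{0,2n}$ — which is the assertion of the theorem.

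I expect the main obstacle to be bookkeeping rather than mathematics: carefully matching the "$\omega$ supported on $A_1$ only" refinement (from the uniqueness of the holomorphic form in the $\zeta_n$-eigenspace, noted just before the theorem) with the statement that $A_1$ is forced to be one of the \emph{finitely many} CM abelian varieties of a \emph{fixed} CM-type, and checking that the converse direction genuinely recovers the attractor condition of Lemma~\ref{attractorcondition} and not something weaker; and then being precise about which Shimura variety $\mc{A}_{(n-1)^2}$ means (level structure, the role of the $\mb{Z}[\zeta_n]$-action which I am suppressing here but will need in Section~5 to run Zilber--Pink) so that "$\Sh_A$" and "Hecke translates" are unambiguous. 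None of these steps requires new input beyond Proposition~\ref{man}, the remarks following it, and Lemma~\ref{attractorcondition}; the content is entirely in organizing them into the unlikely-intersection setup.
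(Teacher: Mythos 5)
The paper gives no freestanding proof of this theorem; it is stated as a summary of Proposition~\ref{man}, the $\mb{Z}/n$-equivariance observation, and the finiteness of isogeny classes of abelian varieties with CM by $\mb{Q}(\zeta_n)$, all recorded in the paragraph just before. Your proposal correctly identifies this and correctly supplies the one piece the paper leaves implicit: the converse direction (CM splitting with $\omega$ supported on the CM factor $\Rightarrow$ attractor), which is needed for the word ``exactly.'' Your argument for the converse is right and worth keeping clean: once $\Jac C_s$ splits $\mb{Z}/n$-equivariantly with $\omega$ supported on a $\mb{Q}(\zeta_n)$-CM factor $A$ with $n_A(1)=1$, the eigenline $H^1(A)[1]$ is by definition a $\mb{Q}(\zeta_n)$-line inside $H^1(C)[1]$, and being of pure Hodge type $(1,0)$ it coincides with $H^{1,0}(C)[1]$ after complexification, which is precisely the criterion of Lemma~\ref{attractorcondition}. (The invocation of the reflex field is unnecessary --- the $\mb{Q}(\zeta_n)$-rationality of the eigenline is tautological from the $\mb{Q}(\zeta_n)$-action --- but it does no harm.) The phrasing ``the key point already proven is: ... if and only if ...'' is a slight overstatement given that you then say you still need to spell out one direction, but the substance is as in the paper, modulo the paper's standing hypothesis (proof by contradiction) that attractor points are $\overline{\mb{Q}}$-points, which is what licenses the Shiga--Wolfart input in the forward direction.
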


\subsection{Prym varieties}\label{section:prym}
In fact, while $\mc{M}_{0, 2n}$ does naturally map to the Shimura variety parametrizing $(n-1)^2$-dimensional (principally polarized) abelian varieties as above, for the application of the Zilber-Pink conjecture it is necessary to refine this map slightly, especially when $n$ is not a prime. The end result will be a map to a PEL type Shimura variety instead of simply $\mc{A}_{(n-1)^2}$.
Therefore in  this section we study the construction of Prym varieties, which is a certain quotient of the Jacobian. 

Recall that, for $n\geq 2$,  $C\rightarrow \mb{P}^1$ denotes the cyclic $n$-fold covering of $\mb{P}^1$ branched at $2n$ points, whose affine model is given in (\ref{eqn:affinemodel}); moreover there is  an action of $\mb{Z}/n$ on $C$. Now suppose we have a divisor $n'$ of $n$, and let $\mb{Z}/n'\subset \mb{Z}/n$ denote the unique order $n'$ subgroup of $\mb{Z}/n$; we fix a generator $\mu\in \mb{Z}/n$ as before, and further denote by $\mu'\defeq (n/n')\mu$, which is  a generator  of this $\mb{Z}/n'$ subgroup.
\begin{defn}
For each $n'$ dividing $n$, define 
\[
C'\defeq C/(\mb{Z}/n'),
\]
where $\mb{Z}/n'$ acts on $C$ via the inclusion $\mb{Z}/n'\subset \mb{Z}/n$. Also let 
\[
\pi_{n'}: C\rightarrow C'
\]
denote the natural quotient map. By further quotienting by a $\mb{Z}/(n/n')$, we also have a map $C'\rightarrow \mb{P}^1$, which is a cyclic $n/n'$-fold covering.
\end{defn}
\begin{prop}
Let $J_{n'}$ denote the cokernel of  the pullback map
\[
\pi^*_{n'}:\Jac(C')\rightarrow \Jac(C).
\]
Then its  cohomology  is given by 
\[
H^1(J_{n'}, \mb{Q}(\zeta))\cong \bigoplus_i H^1(C, \mb{Q}(\zeta))[in'];
\]
equivalently,  the above is the sum of all $H^1(C, \mb{Q}(\zeta))[j]$ where  $j$ satisfies 
\[
\zeta^{jn/n'}=1.
\]
\end{prop}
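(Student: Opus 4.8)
The plan is to recognize $J_{n'}$ as the Prym variety $\Prym(C/C')$ and to compute its first cohomology directly from the defining short exact sequence, tracking everything through the $\mb{Z}/n$-eigenspace decomposition $H^1(C,\mb{Q}(\zeta))=\bigoplus_{j=1}^{n-1}H^1(C)[j]$ of Proposition \ref{prop:hodgenumberscurve}. First, since $\pi_{n',*}\circ\pi_{n'}^{\,*}=[\deg\pi_{n'}]=[n']$ on $\Jac(C')$, the pullback $\pi_{n'}^{\,*}\colon\Jac(C')\to\Jac(C)$ has finite kernel, so in the isogeny category of abelian varieties we have a short exact sequence
\[
0\longrightarrow\Jac(C')\xrightarrow{\ \pi_{n'}^{\,*}\ }\Jac(C)\longrightarrow J_{n'}\longrightarrow 0 .
\]
The functor $A\mapsto H^1(A,\mb{Q})$ is contravariant and exact on this (semisimple) category, sending isogenies to isomorphisms; applying it, and using $H^1(\Jac Z,\mb{Q})=H^1(Z,\mb{Q})$ for a curve $Z$, yields
\[
0\longrightarrow H^1(J_{n'},\mb{Q})\longrightarrow H^1(C,\mb{Q})\xrightarrow{\ \varphi\ }H^1(C',\mb{Q})\longrightarrow 0 ,
\]
so $H^1(J_{n'},\mb{Q})=\ker\varphi$, and everything comes down to pinning down $\varphi$.

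The heart of the matter is the identification of $\varphi$. The point is that \emph{pullback of line bundles} $\pi_{n'}^{\,*}\colon\Jac(C')\to\Jac(C)$ induces on the $H^1$ of the Jacobians — equivalently, on $1$-cycles of the curves — the Gysin/transfer map $H^1(C,\mb{Q})\to H^1(C',\mb{Q})$, \emph{not} the ordinary pullback of cohomology classes (which would run the other way); this is dual to the fact that the norm map $\pi_{n',*}\colon\Jac(C)\to\Jac(C')$ induces ordinary pushforward on $H_1$. Granting this, I use that the ordinary cohomology pullback $\pi_{n'}^{\,*}\colon H^1(C',\mb{Q})\hookrightarrow H^1(C,\mb{Q})$ is injective with image the $\mb{Z}/n'$-invariants $H^1(C,\mb{Q})^{\mb{Z}/n'}$ (a transfer/averaging argument), together with the standard identity $\pi_{n'}^{\,*}\circ\varphi=\sum_{g\in\mb{Z}/n'}g^{*}=n'\cdot\big(\text{averaging projector onto }H^1(C,\mb{Q})^{\mb{Z}/n'}\big)$. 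Since $\pi_{n'}^{\,*}$ is injective it follows that $\ker\varphi$ is exactly the complement of $H^1(C,\mb{Q})^{\mb{Z}/n'}$, i.e.\ the sum of the isotypic pieces of $H^1(C,\mb{Q})$ on which $\mb{Z}/n'$ acts without invariants.

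Now extend scalars to $\mb{Q}(\zeta)$. The eigenspace decomposition is $\mb{Z}/n$-stable, hence stable under the subgroup $\mb{Z}/n'$; the $j=0$ summand vanishes since it is pulled back from $H^1(\mb{P}^1)=0$; and because $\mu'=(n/n')\mu$, the generator $\mu'$ of $\mb{Z}/n'$ acts on $H^1(C)[j]$ by the scalar $\zeta^{jn/n'}$. Consequently
\[
H^1(C,\mb{Q}(\zeta))^{\mb{Z}/n'}=\bigoplus_{\zeta^{jn/n'}=1}H^1(C)[j]=\pi_{n'}^{\,*}H^1(C',\mb{Q}(\zeta)),
\]
and, by the previous paragraph, its complement is
\[
H^1(J_{n'},\mb{Q}(\zeta))=\ker\varphi_{\mb{Q}(\zeta)}=\bigoplus_{\zeta^{jn/n'}\neq 1}H^1(C,\mb{Q}(\zeta))[j],
\]
that is, the sum of the $H^1(C)[j]$ over the $j\in\{1,\dots,n-1\}$ not divisible by $n'$. (The grading survives every step because $\pi_{n'}^{\,*}$ and $\pi_{n',*}$ are $\mb{Z}/n$-equivariant, so the short exact sequence of the first paragraph splits eigenspace by eigenspace after $\otimes\mb{Q}(\zeta)$; one can equally run the argument eigenspace-by-eigenspace from the start.)

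The one genuinely delicate step is the identification of $\varphi$: one must remember that $H^1$ of abelian varieties is \emph{contravariant}, so that $H^1$ of the \emph{cokernel} $J_{n'}$ sits as a \emph{sub}-Hodge-structure of $H^1(C)$ rather than a quotient, and — crucially — that \emph{pullback of line bundles} corresponds to the \emph{transfer} on $H^1$ of the curves rather than to honest pullback of classes; getting this correspondence backwards would interchange $\pi_{n'}^{\,*}H^1(C')$ with $H^1(J_{n'})$, i.e.\ interchange the conditions $\zeta^{jn/n'}=1$ and $\zeta^{jn/n'}\neq 1$. It is worth double-checking on the smallest cases: for $n'=n$ one has $C'=\mb{P}^1$, $\Jac(C')=0$ and $J_{n'}=\Jac(C)$, forcing $H^1(J_{n'})=\bigoplus_{j=1}^{n-1}H^1(C)[j]$, which is precisely $\bigoplus_{\zeta^{j}\neq1}H^1(C)[j]$; for $n'=1$ one has $J_{1}=0$, matching the empty sum. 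Everything else is routine bookkeeping with the eigenspace grading and the exactness of $H^1$ on the isogeny category.
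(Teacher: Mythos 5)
Your argument is sound, and its core steps coincide with the paper's: both proofs identify the image of $\pi_{n'}^*$ on (co)homology with the full $\mb{Z}/n'$-invariant subspace of $H^1(C)$, and then read off eigenspaces from the fact that $\mu'=(n/n')\mu$ acts on $H^1(C)[j]$ by $\zeta^{jn/n'}$. The paper establishes the first of these by combining injectivity of $\pi_{n'}^*$ on divisors with a Riemann--Hurwitz computation of $g(C')$; your route via $\pi_{n',*}\circ\pi_{n'}^{*}=[n']$ and $\pi_{n'}^{*}\circ\pi_{n',*}=\sum_{g}g^{*}$ is cleaner and avoids the genus count.

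The substantive point is that your conclusion is the \emph{complement} of what the Proposition literally asserts --- and your version is the correct one. For the cokernel $J_{n'}$ one has, exactly as you argue, $H^1(J_{n'},\mb{Q}(\zeta))\cong\bigoplus_{\zeta^{jn/n'}\neq 1}H^1(C,\mb{Q}(\zeta))[j]$, the sum over $j$ \emph{not} divisible by $n'$; the displayed condition $\zeta^{jn/n'}=1$ in the statement instead describes $\pi_{n'}^{*}H^1(C',\mb{Q}(\zeta))$, i.e.\ the invariants. Your sanity check at $n'=n$ already settles this, as does a dimension count (for $n=4$, $n'=2$ one finds $\dim J_{2}=9-3=6$, while the stated right-hand side is the single eigenspace $H^1(C)[2]$ of $\mb{Q}(\zeta)$-dimension $6$ rather than $12$). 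The paper's own proof in fact computes $H^1(\Jac(C'),\mb{Q}(\zeta))$ as the coinvariants of $H^1(\Jac(C),\mb{Q}(\zeta))$ and then asserts ``the desired result'' for $H^1(J_{n'})$ --- precisely the contravariance slip you flag as the delicate step. Note that everything used downstream is consistent with your version: the Corollary immediately following, and the identification of the Hodge structure of the Prym quotient with $\bigoplus_{i\in(\mb{Z}/n)^{\times}}H^1(C,\mb{Q}(\zeta))[i]$, both take $H^1$ of the quotient abelian variety to be the complement of the images $\Imag(\pi_{n'}^{*})$, which is exactly your kernel-of-transfer computation. So your write-up should be read as a corrected proof of the Proposition, with $\zeta^{jn/n'}=1$ replaced by $\zeta^{jn/n'}\neq 1$ in the statement.
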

\begin{rmk}
As a consistency check, we see that the above sum is over $i=n', \cdots, (\frac{n}{n'}-1)n'$, and so there are $(n/n'-1)$ non-trivial summands, as expected, since \[
C'\rightarrow \mb{P}^1
\]
is now a $n/n'$-fold cyclic cover.
\end{rmk}
\begin{proof}
Applying the Riemann-Hurwitz formula to the covering map $C'\rightarrow \mb{P}^1$, we have 
\[
2-2g(C')=\frac{n}{n'}(2)-2n(\frac{n}{n'}-1),
\]
and hence 
\[
g(C')=(1+n)\big(1-\frac{n}{n'}\big).
\]
Here $g(C')$ denotes the genus of $C'$. On the other hand, recall that the points of the Jacobian of a curve are the degree zero divisors modulo rational equivalence, and therefore the image of the pullback map
\[
\pi_{n'}^*: \Jac(C')\rightarrow \Jac(C)
\]
is invariant under the $\mb{Z}/n'$-action. On the other hand $\pi_{n'}^*$ is injective, since if $D$ is a degree zero divisor on $C'$ such that 
\[
\pi^*_{n'}(D)=(f)
\]
for some rational function $f$ on $C$, then $f$ is invariant under the Galois group $\mb{Z}/n'$ of the covering map $C\rightarrow C'$, and therefore $f$ descends to $C$. Therefore the  map on homology induced by $\pi^*_{n'}$ is also injective, and lands inside the invariant subspace  $H_1(\Jac(C), \mb{Q}(\zeta))^{\mb{Z}/n'}$ (using $\mb{Q}(\zeta)$-coefficients). By the genus computation above, this gives an isomorphism
\[
H_1(\Jac(C'), \mb{Q}(\zeta))\cong H_1(\Jac(C), \mb{Q}(\zeta))^{\mb{Z}/n'}.
\]
Dualizing, we have $H^1(\Jac(C'), \mb{Q}(\zeta))$ being the coinvariants of the $\mb{Z}/n'$-action on $H^1(\Jac(C), \mb{Q}(\zeta))$, which gives the desired result: indeed, since the action of a generator $\mu'\defeq (n/n')\mu \in \mb{Z}/n'$ on $H^1(\Jac(C), \mb{Q}(\zeta)[j]$ is given by $\zeta^{nj/n'}$, the coinvariants are given by 
\[
\bigoplus_{i=1}^{n-1}H^1(\Jac(C), \mb{Q}(\zeta))[j]/(\zeta^{nj/n'}-1)H^1(\Jac(C), \mb{Q}(\zeta))[j],
\]
whose non-trivial summands are indexed by $i$ such that $\zeta^{jn/n'}=1$, as claimed.
\end{proof}
We immediately deduce the following simple
\begin{cor}
We denote by $\pi^*_{n'}$ the map on cohomologies induced by $\pi^*_{n'}$. Then the quotient of $H^1(C, \mb{Q}(\zeta))$ by the images of $\pi^*_{n'}$ for all proper divisors $n'$ (i.e. $n'\neq 1, n$) is precisely the sum 
\[
\bigoplus_{i\in (\mb{Z}/n)^{\times}} H^1(C, \mb{Q}(\zeta))[i].
\]

\end{cor}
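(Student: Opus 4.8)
The plan is to reduce the statement to the combinatorics of divisors of $n$, all the geometric content being already contained in the preceding Proposition. First I would record the shape of each image. Since $\pi_{n'} : C \to C' = C/(\mb{Z}/n')$ is the quotient by $\mb{Z}/n'$, the pullback $\pi^*_{n'} : H^1(C',\mb{Q}(\zeta)) \to H^1(C,\mb{Q}(\zeta))$ is injective with image the $\mb{Z}/n'$-invariant subspace; by the preceding Proposition this is $\bigoplus_i H^1(C,\mb{Q}(\zeta))[in']$, the span of those eigenspaces $H^1(C,\mb{Q}(\zeta))[j]$ for which $\zeta^{jn/n'}=1$. As $\zeta$ is a primitive $n$-th root of unity this condition is exactly $n' \mid j$, so, recalling that $H^1(C)[0]=0$ and hence $H^1(C,\mb{Q}(\zeta)) = \bigoplus_{j=1}^{n-1} H^1(C,\mb{Q}(\zeta))[j]$, we have $\mathrm{im}(\pi^*_{n'}) = \bigoplus_{1\le j\le n-1,\ n'\mid j} H^1(C,\mb{Q}(\zeta))[j]$ as a graded subspace.

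Next, since each of these images is spanned by a subset of the eigenspaces $\{H^1(C,\mb{Q}(\zeta))[j]\}_{1\le j\le n-1}$, their sum $\sum_{n'} \mathrm{im}(\pi^*_{n'})$, over all divisors $n'$ of $n$ with $n'\neq 1,n$, is again graded, equal to $\bigoplus_{j\in S} H^1(C,\mb{Q}(\zeta))[j]$ where $S\subseteq\{1,\dots,n-1\}$ is the set of indices divisible by at least one proper divisor of $n$. The one elementary point is that $S = \{\,j : 1\le j\le n-1,\ \gcd(j,n)>1\,\}$: if $d = \gcd(j,n) > 1$ then $d\mid n$, $d\neq 1$, and $d\neq n$ (since $d\le j<n$), so $d$ is a proper divisor of $n$ dividing $j$; conversely any $j$ divisible by a proper divisor $n'>1$ of $n$ satisfies $\gcd(j,n)\ge n'>1$. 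Hence $\sum_{n'}\mathrm{im}(\pi^*_{n'}) = \bigoplus_{1\le j\le n-1,\ \gcd(j,n)>1} H^1(C,\mb{Q}(\zeta))[j]$.

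Finally, quotienting the graded vector space $H^1(C,\mb{Q}(\zeta)) = \bigoplus_{j=1}^{n-1} H^1(C,\mb{Q}(\zeta))[j]$ by this graded subspace leaves exactly the complementary summands, which are $\bigoplus_{1\le j\le n-1,\ \gcd(j,n)=1} H^1(C,\mb{Q}(\zeta))[j] = \bigoplus_{i\in(\mb{Z}/n)^{\times}} H^1(C,\mb{Q}(\zeta))[i]$, as asserted. (Sanity check: when $n$ is prime there are no such divisors $n'$, the sum of images is $0$, and the right-hand side is all of $H^1(C,\mb{Q}(\zeta))$, matching $(\mb{Z}/n)^{\times} = \{1,\dots,n-1\}$.) I do not expect any genuine obstacle: the geometric input --- injectivity of $\pi^*_{n'}$ and the identification of its image with the $\mb{Z}/n'$-invariants --- is furnished by the preceding Proposition, and what is left is the bookkeeping above plus the trivial fact that the proper divisors $>1$ of $n$ ``cover'' exactly the integers in $\{1,\dots,n-1\}$ not coprime to $n$; the only thing to be careful with is the convention that ``the map on cohomologies induced by $\pi^*_{n'}$'' means the pullback $H^1(C',\mb{Q}(\zeta))\to H^1(C,\mb{Q}(\zeta))$.
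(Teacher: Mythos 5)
Your proof is correct and is exactly the bookkeeping that the paper compresses into ``we immediately deduce'': identify each image $\mathrm{im}(\pi^*_{n'})$ as the span of the eigenspaces $H^1(C,\mb{Q}(\zeta))[j]$ with $n'\mid j$, and then observe that the proper divisors $n'$ with $1<n'<n$ cover exactly those $j\in\{1,\dots,n-1\}$ with $\gcd(j,n)>1$, leaving the units as the quotient. One small remark worth keeping in mind: the preceding Proposition's statement nominally computes the cohomology of the \emph{cokernel} $J_{n'}$, while what you need (and what its proof actually establishes, via the identification with the $\mb{Z}/n'$-invariants) is the description of the \emph{image} of $\pi_{n'}^*$ in $H^1(C,\mb{Q}(\zeta))$; you handled this correctly by also invoking the standard fact that pullback along a finite quotient is injective with image the invariants.
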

The above can be refined integrally, or equivalently as a statement aboue abelian varieties.
\begin{defn}
We now   define the abelian variety
\[
\Prym\defeq \Jac(C)/\sum_{n'}\Imag(\pi^*_{n'}),
\]
and refer to it as the Prym variety. Here the sum is over proper divisors $n'$ as above.
\end{defn}
\begin{cor}\label{cor:prymhodge}
The abelian variety $\Prym$ has endomorphisms by $\mb{Q}(\zeta)$, and its $\mb{Q}(\zeta)$-Hodge structure is given by 
\[
\bigoplus_{i\in (\mb{Z}/n)^{\times}} H^1(C, \mb{Q}(\zeta))[i].
\] As such, it has dimension $(n-1)\phi(n)$.
\end{cor}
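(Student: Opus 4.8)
The plan is to combine the previous corollary, which computes the rational Hodge structure of $\Prym$, with the standard dictionary between abelian varieties up to isogeny and polarizable $\mb{Q}$-Hodge structures of weight one. First I would observe that by the preceding corollary, the $\mb{Q}(\zeta)$-Hodge structure underlying $\Prym$ is $\bigoplus_{i \in (\mb{Z}/n)^{\times}} H^1(C, \mb{Q}(\zeta))[i]$; the claim about the $\mb{Q}(\zeta)$-Hodge structure is thus immediate from the definition of $\Prym$ as the quotient of $\Jac(C)$ by the sum of the images of the $\pi^*_{n'}$, once one checks this quotient is an abelian variety rather than merely an analytic torus. For that I would note that $\Jac(C)$ carries a $\mb{Q}(\zeta)$-action (induced from the $\mb{Z}/n$-action on $C$), each $\Imag(\pi^*_{n'})$ is stable under this action and is itself an abelian subvariety, hence by Poincaré reducibility their sum is an abelian subvariety, and the quotient $\Prym$ is again an abelian variety; the $\mb{Q}(\zeta)$-action descends to $\Prym$, giving the endomorphisms by $\mb{Q}(\zeta)$.

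Next I would address the endomorphism and dimension statements together. The action of $\mb{Q}(\zeta)$ on $\Prym$ restricts on each summand $H^1(C, \mb{Q}(\zeta))[i]$ to multiplication by $\zeta^i$; since we only retain the summands with $i \in (\mb{Z}/n)^{\times}$, the minimal polynomial of $\mu$ acting on $H^1(\Prym, \mb{Q})$ is exactly the $n$-th cyclotomic polynomial $\Phi_n$, so the $\mb{Q}[\mu]$-module structure on $H^1(\Prym, \mb{Q})$ factors through $\mb{Q}[\mu]/\Phi_n(\mu) \simeq \mb{Q}(\zeta)$, confirming that $\Prym$ has endomorphisms by $\mb{Q}(\zeta)$ in the isogeny category. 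For the dimension, I would compute $\dim_{\mb{Q}} H^1(\Prym, \mb{Q})$ in two equivalent ways: either as $\dim_{\mb{Q}(\zeta)}\!\big(\bigoplus_{i \in (\mb{Z}/n)^{\times}} H^1(C, \mb{Q}(\zeta))[i]\big) \cdot [\mb{Q}(\zeta):\mb{Q}]$, using that each $H^1(C, \mb{Q}(\zeta))[i]$ has $\mb{Q}(\zeta)$-dimension $2n-2$ by Proposition \ref{prop:hodgenumberscurve} and that there are $\phi(n)$ values of $i$ in $(\mb{Z}/n)^{\times}$ — wait, more carefully: the summands are permuted by $\Gal(\mb{Q}(\zeta)/\mb{Q})$, so $\bigoplus_{i \in (\mb{Z}/n)^\times} H^1(C, \mb{Q}(\zeta))[i]$ is the scalar extension of a single $\mb{Q}$-Hodge structure of $\mb{Q}$-dimension $(2n-2)\phi(n)/\,$(count of Galois orbits)$\,\cdot$(orbit size), which recollects to $\mb{Q}$-dimension $(2n-2)\phi(n)$. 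Hence $\dim \Prym = \tfrac12 (2n-2)\phi(n) = (n-1)\phi(n)$.

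The only genuinely substantive point — and the one I would expect to require the most care — is verifying that $\Prym$ is honestly an abelian variety and that the Hodge-structure computation of the previous corollary, which was carried out rationally, is compatible with this integral/geometric quotient; i.e.\ that $H^1(\Prym, \mb{Q})$ really is the claimed direct sum as a sub-Hodge-structure and not just abstractly isomorphic to it after tensoring with $\mb{Q}(\zeta)$. This is handled by the Poincaré reducibility argument above together with the observation that the decomposition of $H^1(C, \mb{Q}(\zeta))$ into $\mu$-eigenspaces is defined over $\mb{Q}(\zeta)$ and Galois-stable, so that the sum over $i \in (\mb{Z}/n)^\times$ descends to a genuine $\mb{Q}$-sub-Hodge structure; everything else is the routine bookkeeping of cyclotomic degrees recorded in Notation \ref{notation:hodge}.
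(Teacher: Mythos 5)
Your proposal is correct and follows the same route the paper intends: the paper states this corollary without proof, treating it as immediate from the preceding proposition and corollary identifying the quotient of $H^1(C,\mb{Q}(\zeta))$ by the images of the $\pi^*_{n'}$ with $\bigoplus_{i\in(\mb{Z}/n)^{\times}}H^1(C,\mb{Q}(\zeta))[i]$, and you simply fill in the standard details (the quotient is an abelian variety, the $\mb{Z}/n$-action descends and factors through $\mb{Q}(\zeta)$ since the minimal polynomial of $\mu$ becomes $\Phi_n$, and the dimension count $\tfrac12(2n-2)\phi(n)=(n-1)\phi(n)$). After the mid-paragraph self-correction your dimension bookkeeping lands in the right place, so there is no gap.
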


\subsection{PEL Shimura varieties}
Now that we have the neccesary statements on the Prym construction from Section \ref{section:prym}, we can define the refined period, whose image is a certain PEL type Shimura variety.
Denote by $V$ the $\mb{Q}$ subspace of $H^1(C, \mb{Q})$ such that 
\[
V\otimes \mb{Q}(\zeta)=\bigoplus_{r\in (\mb{Z}/n)^{\times}}V[r],
\]
where $V[r]\subset H^1(C, \mb{Q}(\zeta))$ denotes the $\zeta^r$ eigenspace for the action of $\mu\in \mb{Z}/n$; as before $V$ has an action  of $\mb{Q}(\zeta)$.

\begin{defn}
The abelian variety $\Prym$ from Section \ref{section:prym} furnishes us with an integral lattice 
\[
V_{\mb{Z}}\defeq H^1(\Prym, \mb{Z})\subset H^1(\Prym, \mb{Q})=V,
\]
equipped with a symplectic form $\Psi$. Let 
\[
\mb{S}\defeq \Res_{\mb{C}/\mb{R}}\mb{G}_m
\]
denote the Deligne torus, and let $\mf{H}$ denote the space of homomorphisms 
\[
h: \mb{S}\rightarrow \GSp(V_{\mb{R}}, \Psi)
\]
which define Hodge structures of type $(-1,0)+(0,-1)$ on $V_{\mb{Z}}$. The space $\mf{H}$ is isomorphic to the Siegel upper half space of dimension
\[
\frac{(n-1)\phi(n)((n-1)\phi(n)+1)}{2},
\]
since it parametrizes abelian varieties of dimension $(n-1)\phi(n)$. 
\end{defn}

Now the Shimura datum $(\GSp(V, \Psi), \mf{H})$ certainly defines a Shimura variety to which $\mc{M}_{0,2n}$ maps; we can describe its $\mb{C}$-points as follows. The integral structure on $V$ defines a maximal compact subgroup $K\subset \GSp(V, \Psi)(\mb{A})$, and then we have 
\[
\Sh(\GSp(V, \Psi), \mf{H})(\mb{C})=\GSp(V, \Psi)(\mb{Q})\backslash \mf{H}\times \GSp(V, \Psi)(\mb{A}_f)/K;
\]
here $\mb{A}$ (respectively $\mb{A}_f$) denotes the ring of (respectively finite) adeles.  However, as we shall see presently $\mc{M}_{0,2n}$ lands inside a smaller Shimura subvariety.
\begin{defn}\label{defn:shimura}\hfill
\begin{enumerate}
    \item 
For a $\mb{Q}$-algebraic subgroup $H\subset \GSp(V, \Psi)$, define 
\[
\mf{H}_{H}\defeq \{h\in \mb{S}\rightarrow \GSp(V_{\mb{R}}, \Psi)|h \ \text{factors through} \ H_{\mb{R}}\}.
\]
\item Recall that there is a $\mb{Q}(\zeta)$-action on $V$. Define the algebraic group
\[
G\defeq \GL_{\mb{Q}(\zeta)}(V)\cap \GSp(V,\Psi);
\]
here $\GL_{\mb{Q}(\zeta)}(V)$ denotes the elements of $\GL(V)$ commuting with the action of $\mb{Q}(\zeta)$ on $V$.
\item Let $\Sh$ denote the Shimura variety associated to the Shimura datum $(G, Y_G)$;  by construction this is  a subvariety of the Shimura variety associated to the Shimura datum $(\GSp(V, \Psi), \mf{H})$.
\end{enumerate}

\end{defn}
\begin{prop}\hfill
\begin{enumerate}
\item 
The real points of the group $H$ are given by 
\[
G_{\mb{R}}\cong \prod_{r\in (\mb{Z}/n)^{\times}}U(V[r]\otimes_{\mb{Q}(\zeta)}\mb{C}),
\]
where, on the right hand side, in the $r^{\rm{th}}$ factor of the product the embedding $\mb{Q}(\zeta)\hookrightarrow\mb{C}$ is the one sending  $\zeta$ to $\zeta^r$.
\item 
The dimension of $\Sh$ is 
\begin{equation}\label{eqn:dimshimura}
\frac{1}{2}\sum_{r\in (\mb{Z}/n)^{\times}} (2r-1)(2n-1-2r).
\end{equation}
Here the sum is over  representatives between $1$ and $n$ of the elements of  $(\mb{Z}/n)^{\times}$.

\end{enumerate}
\end{prop}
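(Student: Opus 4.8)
The plan is to base-change $G$ to $\mb{R}$, decompose according to the complex places of $\mb{Q}(\zeta)$, identify each resulting factor of $G_{\mb{R}}$ with a unitary group, and read off its signature from the Hodge numbers of Proposition~\ref{prop:hodgenumberscurve}; the dimension of $\Sh$ then falls out as the dimension of the associated product of Hermitian symmetric domains.

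First I would record the structural input that makes $G$ a unitary group. Since the generator $\mu$ of the $\mb{Z}/n$-action comes from an automorphism of $C$, hence of $\Prym$, it preserves the alternating $\mb{Q}$-valued polarization form $\Psi$ on $V$; equivalently, the $\mb{Q}(\zeta)$-action satisfies $\Psi(\lambda v,w)=\Psi(v,\bar\lambda w)$, where $\lambda\mapsto\bar\lambda$ is complex conjugation on $\mb{Q}(\zeta)$. Thus $(V,\Psi)$ together with the $\mb{Q}(\zeta)$-action is precisely a skew-Hermitian $\mb{Q}(\zeta)$-space and $G$ is its group of unitary similitudes. Over $\mb{R}$ one has $\mb{Q}(\zeta)\otimes_{\mb{Q}}\mb{R}\cong\prod\mb{C}$, with one factor $\mb{C}$ for each complex place of $\mb{Q}(\zeta)$ --- i.e.\ for each pair $\{r,-r\}$ in $(\mb{Z}/n)^{\times}$, so $\phi(n)/2$ factors. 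Correspondingly $V_{\mb{R}}=\bigoplus_r V_{(r)}$ splits $\Psi$-orthogonally, each $V_{(r)}$ being a complex vector space of dimension $\dim_{\mb{Q}(\zeta)}V=2n-2$ on which $\Psi$ induces a nondegenerate Hermitian form $H_{(r)}$. An $\mb{R}$-point $(g_{(r)})$ of $\GL_{\mb{Q}(\zeta)}(V)$ then lies in $G_{\mb{R}}$ exactly when every $g_{(r)}$ scales $H_{(r)}$ by one and the same real factor; hence $G_{\mb{R}}$ is the equal-multiplier subgroup of $\prod_r GU(H_{(r)})$, with adjoint group $\prod_r PU(H_{(r)})$, which is the product description in the statement (as always up to the one-dimensional central torus, which is irrelevant below). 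Here the space attached to $r$ is, after identifying $V_{(r)}$ with the $(+i)$-eigenspace of its complex structure, exactly the $\mu$-eigenspace $H^1(C,\mb{C})[r]$, i.e.\ $V[r]\otimes\mb{C}$ in the normalization of the statement.

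The step I expect to be the real work is the signature of $H_{(r)}$. I would deduce it from the Hodge--Riemann bilinear relations for the polarized weight-one Hodge structure $H^1(C)$: the Hermitian form $a\mapsto i\langle a,\bar a\rangle$ is definite of opposite signs on $H^{1,0}(C)$ and $H^{0,1}(C)$, and it is $\mu$-invariant since $\mu$ is rational and symplectic; restricting it to the $\mu$-eigenspace $H^1(C,\mb{C})[r]$ therefore gives a nondegenerate Hermitian form whose signature is $\big(\dim H^{1,0}(C)[r],\ \dim H^{0,1}(C)[r]\big)=(2r-1,\,2n-2r-1)$ by Proposition~\ref{prop:hodgenumberscurve} (taking $1\le r\le n-1$). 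One must then check that this agrees, up to a positive real scalar and an overall sign, with $H_{(r)}$ --- that is, that the ``analytic'' complex structure on $V_{(r)}$ and the Hodge-theoretic one yield the same unitary group; this compatibility is the only point that is not a formal unwinding. Granting it, the factor of $G_{\mb{R}}$ indexed by $r$ is $U(2r-1,2n-2r-1)$, non-compact with both indices at least $1$, and its Hermitian symmetric domain has complex dimension $(2r-1)(2n-2r-1)$.

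Finally, $\dim\Sh$ equals the complex dimension of the Hermitian symmetric domain of $G_{\mb{R}}$, which is the sum of the factor dimensions over the $\phi(n)/2$ complex places: $\sum_r(2r-1)(2n-2r-1)$. Since the summand is invariant under $r\mapsto n-r$, rewriting the sum over a full set of representatives $r\in(\mb{Z}/n)^{\times}$ with $1\le r\le n-1$ inserts a factor of $\tfrac12$, giving $\dim\Sh=\tfrac12\sum_{r\in(\mb{Z}/n)^{\times}}(2r-1)(2n-1-2r)$, as claimed.
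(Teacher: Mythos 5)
Your proposal is correct and follows essentially the same approach as the paper: identify $(V,\Psi)$ as a skew-Hermitian $\mb{Q}(\zeta)$-space so that $G_{\mb{R}}$ decomposes over the complex places into unitary groups, read off the signature $(2r-1,2n-2r-1)$ of each factor from the Hodge numbers of Proposition~\ref{prop:hodgenumberscurve} via the Hodge--Riemann relations, and sum $\dim U(a,b)/(U(a)\times U(b))=ab$. The only difference is that you spell out the base-change and signature argument directly where the paper simply cites \cite[Remark~4.6]{moonen} for part~(1), and you are appropriately careful about the similitude factor and the adjoint-group normalization that the statement of the proposition elides.
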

\begin{proof}
The first part follows from \cite[Remark 4.6]{moonen}. For the second part, it suffices to find the signature of the pairing on each of the subspaces $V[r]$, since the hermitian symmetric domain for the unitary group $U(a,b)$ has dimension $ab$. The signatures, or equivalently the Hodge numbers, are given by   Proposition \ref{prop:hodgenumberscurve}, and (\ref{eqn:dimshimura}) follows immediately.
\end{proof}
The Prym construction therefore gives us a map 
\[
P: \mc{M}_{0,2n}\rightarrow \Sh.
\]
We have the following result, which is analogous to the classical result that the Torelli map is an embedding, although we only require an infinitesimal version of this.
\begin{lem}
The derivative of  $P$ is injective.
\end{lem}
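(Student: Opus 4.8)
The plan is to reduce the statement to the infinitesimal Torelli theorem for the curve $C$, via the various compatible Hodge-theoretic identifications already established. First I would recall that $\mc{M}_{0,2n}$ carries the family of curves $C_s$, and that by Theorem~\ref{thm:crepant}(3) the Kodaira--Spencer map for the Dolgachev Calabi--Yau family $\mc{X} \to \mc{H}_n$ is an isomorphism at every point; equivalently, under the identification $\mc{M}_{0,2n} \cong \mc{M}_{\mathrm{CY}}$, the tangent space $T_s\mc{M}_{0,2n}$ maps isomorphically onto $H^1(X, TX) \cong H^{2n-4,1}(X)^{\vee} \otimes H^{2n-3,0}(X)$, i.e. the family $\mc{X}$ is a maximal variation of Hodge structure. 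Via Lemma~\ref{lemma:hodgestructurecy} and Corollary~\ref{cor:hodgestructurecy}, the Hodge structure $H^{2n-3}(X)[1] \cong \wedge^{2n-3}H^1(C)[1]$ records the same period data, up to determinant twists which are locally constant, as the weight-one Hodge structure $H^1(C)[1]$; so the derivative of the period map for $\mc{X}$ in the $[1]$-eigenspace is, up to these harmless twists, the derivative of the period map for the eigenpiece $H^1(C)[1]$ of the family of curves.

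Next I would compare $P$ with the full period map of the curve family. The map $P$ lands in $\Sh$, whose tangent space at $P(s)$ decomposes, according to the real form $G_{\mb{R}} \cong \prod_{r \in (\mb{Z}/n)^\times} U(V[r] \otimes \mb{C})$, into the sum over $r$ coprime to $n$ of the tangent spaces $\mathrm{Hom}(H^{1,0}(C)[r], H^{0,1}(C)[r])$ to the corresponding unitary symmetric domains. Thus the differential $dP_s$ is the direct sum over such $r$ of the Kodaira--Spencer-type maps
\[
\kappa_r \colon T_s\mc{M}_{0,2n} \longrightarrow \mathrm{Hom}\!\left(H^{1,0}(C)[r],\, H^{0,1}(C)[r]\right),
\]
each of which is cup product with the Kodaira--Spencer class followed by contraction, restricted to the $\zeta^r$-eigenspace. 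It therefore suffices to show that $dP_s = \bigoplus_r \kappa_r$ is injective; and for this it suffices to show injectivity of the single map $\kappa_1$ (or indeed of $\kappa_{n-1}$, which by Proposition~\ref{prop:hodgenumberscurve} has target of the same dimension $2n-3 = \dim \mc{M}_{0,2n}$), since injectivity of one summand forces injectivity of the sum.

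Finally I would establish injectivity of $\kappa_1$ directly. Here the cleanest route is to observe that a nonzero tangent vector $\xi \in T_s\mc{M}_{0,2n}$ is, by the isomorphism $\mc{M}_{0,2n} \cong \mc{M}_{0,2n}$ being the moduli of $2n$ points on $\mb{P}^1$, a first-order deformation moving the branch points; by Theorem~\ref{thm:crepant}(3) it maps to a nonzero class in $H^1(X, TX) \cong H^{2n-4,1}(X)^\vee \otimes H^{2n-3,0}(X)$, and cup product with this class on the one-dimensional space $H^{2n-3,0}(X)$ is then, by maximality, a nonzero map into $H^{2n-4,1}(X)$. Transporting this statement through the identification $H^{2n-3}(X)[1] \cong \wedge^{2n-3}H^1(C)[1] \cong (H^1(C)[1])^\vee \otimes \det H^1(C)[1]$ of Lemma~\ref{lemma:hodgestructurecy} and Lemma~\ref{attractorcondition}'s computation, and noting the determinant factor contributes nothing infinitesimally, the same nonvanishing says precisely that $\xi$ acts nontrivially via cup product from $H^{1,0}(C)[1]$ to $H^{0,1}(C)[1]$, i.e. $\kappa_1(\xi) \ne 0$. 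Alternatively, and perhaps more robustly, one invokes the infinitesimal Torelli theorem for these cyclic covers of $\mb{P}^1$ as established by Deligne--Mostow (the period map for the family of eigen-local-systems on $\mc{M}_{0,2n}$ is an immersion — this is implicit in the Deligne--Mostow monodromy analysis we use later), which directly gives that $\bigoplus_{r \in (\mb{Z}/n)^\times}\kappa_r$ is injective. The main obstacle is simply bookkeeping: one must carefully track the $\mb{Q}(\zeta)$-equivariant structure and the determinant twists through the chain of isomorphisms to confirm that the derivative of $P$ really is the sum of the eigen-Kodaira--Spencer maps $\kappa_r$, after which the nonvanishing of even a single $\kappa_r$ — guaranteed by the maximality clause in Theorem~\ref{thm:crepant} — closes the argument.
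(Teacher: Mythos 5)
Your proposal is correct in outline but takes a genuinely different final step from the paper, so let me compare. The paper passes to \emph{cotangent} spaces: it identifies $T^*_{P(x)}\Sh$ via Serre duality with $\bigoplus_{r<n/2} H^0(C,\Omega^1_C)[r]\otimes H^0(C,\Omega^1_C)[n-r]$ and $T^*_x\mc{M}_{0,2n}$ with $H^0(C,(\Omega^1_C)^{\otimes 2})_{\inv}$, shows the codifferential is cup product on each summand, and then concludes by noting that already the $r=1$ summand $H^0(\Omega^1_C)[1]\otimes H^0(\Omega^1_C)[n-1]\to H^0((\Omega^1_C)^{\otimes 2})_{\inv}$ is an isomorphism, because $H^0(\Omega^1_C)[1]$ is one-dimensional and a product of two nonzero holomorphic $1$-forms on a curve is nonzero. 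You instead stay on tangent spaces, identify $dP$ with $\bigoplus_r \kappa_r$ and reduce to a single $\kappa_r$, which you propose to control via the maximality of the Kodaira--Spencer map for the Calabi--Yau family (Theorem~\ref{thm:crepant}(3)) together with the Calabi--Yau contraction isomorphism $H^1(X,TX)\cong\Hom(H^{2n-3,0}(X),H^{2n-4,1}(X))$, transported through the wedge-power identification of Lemma~\ref{lemma:hodgestructurecy}. Both routes are sound; the paper's final step is more self-contained and elementary (no appeal to the cited maximality or to Deligne--Mostow immersivity), while yours avoids the explicit quadratic-differential computation at the cost of leaning on Theorem~\ref{thm:crepant}(3). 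Two cautions: first, the eigenspace carrying $H^{2n-3,0}(X)\oplus H^{2n-4,1}(X)$ is $[n-1]$, not $[1]$ (the $[1]$-eigenspace carries $H^{0,2n-3}\oplus H^{1,2n-4}$); so the $\kappa$ your argument directly controls is $\kappa_{n-1}$, which you do flag as an alternative but list second. Second, ``by maximality'' only gives that the Kodaira--Spencer class is nonzero --- that its contraction against $H^{2n-3,0}(X)$ is then nonzero is the separate fact that the Calabi--Yau contraction map is an isomorphism. Finally, be aware that the ``bookkeeping'' you defer --- establishing that $dP$ really is $\bigoplus_r\kappa_r$ with the correct eigenspace bounds $r<n/2$ coming from the polarization constraint --- is in fact the bulk of the paper's Claim~\ref{claim:cotangent} and is where most of the real work in either proof lies; the final nonvanishing argument, whichever route one takes, is comparatively short.
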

\begin{proof}
We will show equivalently that, for each $x\in \mc{M}_{0,2n}$, the codifferential map 
\[
P^*: T_{P(x)}^*\Sh \rightarrow T_x^* \mc{M}_{0,2n}
\]
is surjective. Here for a variety $X$ and a point $x\in X$ we denote by $T^*_xX$ the cotangent space to $X$ at $x$. 

First we identify the source and target of $P^*$ in terms of the geometric structures at hand.
\begin{claim}\label{claim:cotangent}
We have the following identifications of the cotangent spaces:
\begin{align}
    T^*_{P(x)}\Sh &\cong \bigoplus_{\substack{r\in (\mb{Z}/n)^{\times}\\ r<n/2}} H^0(C, \Omega^1_C)[r]\otimes H^0(C, \Omega^1_C)[n-r], \label{eqn:cotangentshimura}\\
    T_x^* \mc{M}_{0,2n} &\cong H^0(C, (\Omega^1_C)^{\otimes 2})_{\inv} \label{eqn:cotangentmoduli}.
\end{align}
Here $H^0(C, \Omega^1_C)[r]$ denotes the $\zeta^r$ eigenspace of $H^0(C, \Omega^1_C)$, and the subscript $\inv$ in (\ref{eqn:cotangentmoduli}) denotes the invariant part of the $\mb{Z}/n$-action. 

Furthermore, under these identifications, the restriction of  $P^*$ on each of the factors is  in (\ref{eqn:cotangentshimura}) is given by the cup product map
\[
\cup: H^0(C, \Omega^1_C)[r]\otimes H^0(C, \Omega^1_C)[n-r]\rightarrow H^0(C, (\Omega^1_C)^{\otimes 2})_{\inv}
\]
    
\end{claim}
Let us first see how to conclude the proof of this lemma, assuming this claim. Note that the dimension of $H^0(C, (\Omega^1_C)^{\otimes 2}_{\inv}$ is $2n-3$, since it has to be the dimension of $\mc{M}_{0,2n}$.  We will now show that  the restriction (which we continue to denote by $P^*$)
\begin{equation}\label{eqn:resp*}
P^*: H^0(C,\Omega^1_C)[1]\otimes H^0(C,\Omega^1_C)[n-1]\rightarrow H^0(C, (\Omega^1_C)^{\otimes 2})_{\inv}
\end{equation}
is in fact an isomorphism; certainly the dimensions of the source and target agree, and so it suffices to show this map is injective. But this is clear since the space $H^0(C,\Omega^1_C)[1]$ is one-dimensional and spanned by $\omega$, say, and so anything in the kernel of the map (\ref{eqn:resp*}) takes the form $\omega\otimes \eta$ for some $\eta\in H^0(C, \Omega^1_C)[n-1]$. On the other hand $\omega$ and $\eta$ are non-zero 1-forms on $C$, and the quadratic differential obtained by multiplying them together is certainly non-zero. This shows that the map (\ref{eqn:resp*}) is injective, and therefore it is an isomorphism, as required.
Therefore it suffices to prove Claim \ref{claim:cotangent}:
\begin{proof}[Proof of Claim]
By construction, we have an embedding 
\[
\Sh \rightarrow \Sh(\GSp(V, \Psi), \mf{H}),
\]
where the right hand side denotes the Shimura variety attached to the Shimura datum $(\GSp(V, \Psi), \mf{H})$. The latter is the moduli space of abelian varieties of dimension $(n-1)\phi(n)$ equipped with a polarization of the fixed  type specified by the polarization on the Prym variety. Therefore the tangent  space to $\Sh(\GSp(V, \Psi), \mf{H})$ at  $P(x)$ is given by 
\begin{equation}\label{eqn:deformav}
\Sym^2(t_{\Prym})\subset t_{\Prym}\otimes t_{\Prym^{\vee}},
\end{equation}
where for an abelian variety $A$ we denote by $t_A$ the tangent space at the origin, and $A^{\vee}$ its  dual abelian variety. On the left hand side of the  above we have also  made the identification 
\[
t_{\Prym}\cong  t_{\Prym^{\vee}}
\]
using the polarization on $\Prym$. Note that in (\ref{eqn:deformav}) the right hand side is the deformation space of $\Prym$ with no reference to polarizations. By Corollary \ref{cor:prymhodge}, we may make the identification 
\[
t_{\Prym}\cong \bigoplus_{i\in (\mb{Z}/n)^{\times}} H^1(C, \mc{O}_C)[i];
\]
for convenience we denote the right hand side of this identification by $H^1(C, \mc{O}_C)_{\prim}$;  similarly we define $H^0(C, \Omega^1_C)_{\prim}$ to be the sum of the eigenspaces of $H^0(C, \Omega^1_C)$ with eigenvalues primitive $n$th roots of unity. 

On the other hand, as mentioned above, the right hand side of (\ref{eqn:deformav}) is the deformation space of the abelian variety $\Prym$, and hence there is a  Kodaira-Spencer map 
\begin{equation}\label{eqn:ks}
\KS: t_{\Prym}\otimes t_{\Prym^{\vee}}\rightarrow \Homo(H^0(C, \Omega^1_C)_{\prim}, H^1(C, \mc{O}_C)_{\prim}),
\end{equation}
which is the natural isomorphism once we make the identifications
\[
t_{\Prym}\otimes t_{\Prym^{\vee}}\cong H^1(C, \mc{O}_C)_{\prim}^{\otimes 2},
\]
and 
\[
H^1(C, \mc{O})_{\prim}\cong H^0(C, \Omega^1_C)_{\prim}^{\vee}, 
\]
the latter of which is induced by Serre duality.

By definition,  $\Sh$ is contained in the locus of $\Sh(\GSp(V, \Psi), \mf{H})$ where the Hodge structure $H^1$ admits a $\mb{Q}(\zeta)$-action  and a splitting
\[
H^1\otimes_{\mb{Q}} \mb{Q}(\zeta)\cong \bigoplus_{r\in (\mb{Z}/n)^{\times}}H^1[r]
\]
with prescribed Hodge numbers. Therefore the Kodaira-Spencer map (ref{eqn:ks}) restricted to $\Sh$ must preserve the different eigenspaces. In other words, we have 
\begin{equation}\label{eqn:ksshimura}
\KS|_{\Sh}: T_{P(x)}\Sh\rightarrow \bigoplus_{r\in (\mb{Z}/n)^{\times}}\Homo(H^0(C, \Omega^1_C)[r], H^1(C, \mc{O}_C)[r]);
\end{equation}
now since $\KS$ itself is an isomorphism, $\KS|_{\Sh}$ is injective at least; on the other hand, deformations in $\Sh$ are also required to  preserve the polarization, which means  further that
\begin{equation}\label{eqn:ksshimurapol}
\KS|_{\Sh}: T_{P(x)}\Sh\rightarrow \bigoplus_{\substack{r\in (\mb{Z}/n)^{\times}\\ r<n/2}}\Homo(H^0(C, \Omega^1_C)[r], H^1(C, \mc{O}_C)[r]).
\end{equation}

Since the dimensions of the two sides of (\ref{eqn:ksshimurapol}) now agree, and the map is injective,  it must in fact be an isomorphsm.

Again using Serre duality, for each $r=1, \cdots, n$, we have 
\[
H^0(C, \Omega^1_C)[r]\cong H^1(C, \mc{O}_C)[n-r]^{\vee},
\]
and using the remark above  we may rewrite (\ref{eqn:ksshimurapol}) as 
\begin{align} 
T_{P(x)}\Sh &\cong \bigoplus_{\substack{r\in (\mb{Z}/n)^{\times}\\ r<n/2}} (H^0(C, \Omega^1_C)[r])^{\vee}\otimes H^1(C, \mc{O}_C)[r] \\
           &\cong \bigoplus_{\substack{r\in (\mb{Z}/n)^{\times}\\ r<n/2}} H^1(C, \mc{O}_C)[n-r]\otimes H^1(C, \mc{O}_C)[r].
\end{align}
Dualizing, we therefore deduce  
\[
T^*_{P(x)}\Sh \cong \bigoplus_{\substack{r\in (\mb{Z}/n)^{\times}\\r<n/2}}   H^0(C, \Omega^1_C)[r]\otimes H^0(C, \Omega^1_C)[n-r],
\]
which  proves (\ref{eqn:cotangentshimura}), as claimed.

The identification (\ref{eqn:cotangentmoduli}) is well known: see for example  \cite[Proposition 4.1]{langeprym}; furthermore, \cite[Proposition 4.1]{langeprym} also shows that the codifferential of the map 
\[
\mc{M}_{0,2n}\rightarrow \Sh(\GSp(V, \Psi), \mf{H}),
\]
namely 
\[
P^*:\Sym^2(H^0(C, \Omega^1_C)_{\prim})\rightarrow H^0(C, (\Omega^1_C)^{\otimes 2})_{\inv}
\]
is the cup product map followed by projection onto the invariant factor, and therefore the same is true for the codifferential of the map
\[
\mc{M}_{0,2n}\rightarrow \Sh,
\]
as claimed. This concludes the proofs of all the statements in Claim \ref{claim:cotangent}.
\end{proof}
\end{proof}
This gives immediately the following 
\begin{cor}\label{cor:embedding}
The dimension of the image  $P(\mc{M}_{0,2n})$ inside $\Sh$ is $2n-3$.
\end{cor}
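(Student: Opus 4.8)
The plan is to deduce Corollary~\ref{cor:embedding} directly from the preceding Lemma, which asserts that the derivative of $P\colon \mc{M}_{0,2n}\rightarrow \Sh$ is injective at every point $x\in\mc{M}_{0,2n}$. First I would recall that $\mc{M}_{0,2n}$ is an irreducible variety of dimension $2n-3$ (this is the count already recorded in the text: $\dim\mc{M}_{0,2n} = (2n)-3 = 2n-3$). Since $P$ is a morphism of varieties whose differential $dP_x$ is injective at a single point --- indeed at every point --- the image $P(\mc{M}_{0,2n})$ has dimension equal to $\dim \mc{M}_{0,2n}$ at that point: injectivity of the derivative forces the fibres of $P$ through $x$ to be zero-dimensional near $x$, so by the fibre-dimension theorem $\dim P(\mc{M}_{0,2n}) \ge \dim\mc{M}_{0,2n} - \dim(\text{fibre}) = 2n-3$. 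Conversely $\dim P(\mc{M}_{0,2n}) \le \dim \mc{M}_{0,2n} = 2n-3$ since $P$ is a morphism. Hence the image has dimension exactly $2n-3$.

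In more detail, the key steps in order: (1) invoke the Lemma to get $dP_x$ injective for some (equivalently, for a generic) $x$; (2) recall $\dim \mc{M}_{0,2n} = 2n-3$ as established in the remark following Corollary~\ref{cor:hodgestructurecy}; (3) apply the general principle that a morphism of varieties with injective differential at a generic point is generically finite onto its image, so the image is a subvariety of the same dimension as the source; (4) conclude $\dim P(\mc{M}_{0,2n}) = 2n-3$. One may phrase step (3) either analytically --- $P$ is locally an immersion near $x$ in the complex-analytic topology by the holomorphic inverse function theorem, so it is locally a homeomorphism onto a $(2n-3)$-dimensional analytic submanifold --- or algebraically via generic smoothness and the theorem on the dimension of fibres; either suffices.

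I do not expect any real obstacle here: the content has already been extracted in the Lemma, and Corollary~\ref{cor:embedding} is a formal consequence. The only point requiring a word of care is that injectivity of the derivative at one point gives dimension of the \emph{image}, not injectivity of $P$ itself (the map $P$ need not be injective, only generically finite), but the statement of the Corollary only claims the former, so this is exactly what is needed.

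\begin{proof}
By the preceding Lemma, the derivative of $P$ is injective at every point $x\in \mc{M}_{0,2n}$; in particular $P$ is generically finite onto its image. The image $P(\mc{M}_{0,2n})$ is therefore an irreducible subvariety of $\Sh$ of the same dimension as $\mc{M}_{0,2n}$, which, as recorded after Corollary~\ref{cor:hodgestructurecy}, equals $h^{2n-4,1} = 2n-3$. (Concretely, by the holomorphic inverse function theorem $P$ is a local analytic immersion near each $x$, hence locally a homeomorphism onto a $(2n-3)$-dimensional analytic submanifold of $\Sh$; alternatively, injectivity of $dP_x$ forces the fibre of $P$ through $x$ to be zero-dimensional, whence $\dim P(\mc{M}_{0,2n}) \ge \dim\mc{M}_{0,2n} = 2n-3$, while the reverse inequality is automatic since $P$ is a morphism.) This gives $\dim P(\mc{M}_{0,2n}) = 2n-3$, as claimed.
\end{proof}
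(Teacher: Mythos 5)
Your proof is correct and matches the paper's intent exactly: the paper states the corollary as an immediate consequence of the lemma on injectivity of $dP$, offering no further argument, and your deduction (injective derivative $\Rightarrow$ generically finite onto image $\Rightarrow$ image has dimension $\dim\mc{M}_{0,2n}=2n-3$) is precisely the reasoning being invoked.
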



\subsection{Special subvarieties}\label{section:specialsub}
Now that we have defined the relevant PEL type Shimura variety $\Sh$, we can rephrase Theorem \ref{thm:splittingcm} in terms of special subvarieties of $\Sh$.

For $A$ one of the finite number of isogeny representatives of abelian varieties as in Proposition~\ref{man}, we denote by $\Sh_A$ the sub-Shimura variety of $\Sh$ classifying abelian varieties parametrized by $\Sh$ that split, as a product, of $A$ and some other abelian variety. Then $\Sh_A$ is a Shimura variety associated to group the Weil restriction, from $\mb{Q}(\zeta_n)^+$ to $\mb{Q}$, of a unitary group $G_A$. We now describe this in detail.

We now introduce some notation to describe the signatures of $G_A$ under the various embeddings $\mb{Q}(\zeta_n)^+ \hookrightarrow \mb{R}$. Indeed, $A$ is also an abelian variety with compatible $\mb{Z}/n$-action and Hodge structure; in other words, each eigenspace $H^1(A)[i]$ over $\mb{Q}(\zeta_n)$ again splits as $$H^1(A)[i] \underset{\mb{Q}(\zeta_n)}{\otimes} \mb{C} \simeq H^{1, 0}(A)[i] \oplus H^{0, 1}(A)[i].$$ (We will almost immediately argue that this splitting, once again, is defined over $\mb{Q}(\zeta_n)$.) But the Galois conjugates of $\omega$ are in distinct eigenspaces $H^1(A)[i]$ and already account for $\phi(n)$ dimensions' worth of cohomology -- which is the total dimension of $H^1(A)$ as a vector space. Hence, for each $i$, the total dimension of $H^{1, 0}(A)[i] \oplus H^{0, 1}(A)[i]$ is one, and so one space has dimension one while the other has dimension zero. 
\begin{defn} Define 
$$n_A(i) \defeq \dim H^{1, 0}(A)[i].$$ 
From the remark above the $n_A(i)$'s take values either zero or one. We will sometimes also  refer to the $n_A(i)$s as the \textit{CM type} of $A$, since they encode information  equivalent to the standard notion of CM types.
\end{defn}
In particular, note that as $\omega$ itself is in the first eigenspace and is holomorphic, we know $n_A(1) = 1$. 

Recall that $V_{\mb{Z}}$ denotes the integral lattice given by the Prym variety and that $G$ denotes the $\mb{Q}$-algebraic group associated to the Shimura variety $\Sh$. Now for each CM type $\{n_A(i)\}$, we fix a splitting of the integral Hodge structure of the form
\begin{equation}\label{eqn:integralsplitting}
V_{\mb{Z}}\cong V_A\oplus V',
\end{equation}
where both $V_A$ and $V'$ have actions by $\mb{Z}[\zeta]$, and such that $V_A$ has the CM type given by $\{n_A(i)\}$; more precisely, the $\mb{Z}[\zeta]$-action on $V_A$ allows us to define eigenspaces $V_A[i]$ as before, and we require
\[
\dim_{\mb{C}}V_A^{1,0}[i]=n_A(i)
\]
for each $i=1, \cdots, n$.
\begin{defn}
Let $G_A$ denote the $\mb{Q}$-algebraic group which is the subgroup of $G$ preserving the subspaces $V_A\otimes_{\mb{Z}} \mb{Q}$ and $V'\otimes_{\mb{Z}}\mb{Q}$.
\end{defn}

\begin{defn}\hfill
\begin{enumerate}
    \item 
For a $\mb{Q}$-subgroup $H\subset G$ such that the subspace $\mf{H}_H$ (see Definition \ref{defn:shimura}) is non-empty, let $\mf{H}_H^+$ denote a connected component of $\mf{H}_H$.  A special subvariety (for the subgroup $H$) is the image under the uniformization map 
\[
\mf{H}\times G(\mb{A}_f)/K\rightarrow G(\mb{Q})\backslash \mf{H}\times G(\mb{A}_f)/K=\Sh(\mb{C})
\]
of $\mf{H}^+_H\times \eta K$, for some element $\eta\in G(\mb{A}_f)$.
\item Let $\Sh_A$ denote the special subvariety associated to the inclusion of a connected component of $\mf{H}_{G_A}$, and the element $\eta=1$.
\end{enumerate}
\end{defn}
From Theorem \ref{thm:splittingcm} we immediately deduce the following slight refinement:
\begin{prop}\label{prop:splittingcmshimura}
For each attractor point $x$ in $\mc{M}_{0,2n}$ there exists a CM type $\{n_A(i)\}$ such that $x$ lies in a special subvariety of $\Sh$ associated to the group $G_A$. 
\end{prop}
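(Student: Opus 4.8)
The statement follows by combining Theorem \ref{thm:splittingcm} with the bookkeeping introduced in this subsection, so the plan is essentially to translate the isogeny-splitting assertion into the language of special subvarieties. First I would start from an attractor point $x \in \mc{M}_{0,2n}$ and apply Theorem \ref{thm:splittingcm}: the corresponding Prym variety $\Prym_x$ (the image of $x$ under $P$, viewed inside $\Sh$) is isogenous to a product $A \times B$, where $A$ is one of the finitely many isogeny representatives of CM abelian varieties with $\mb{Q}(\zeta_n)$-multiplication appearing in Proposition \ref{man}, and the holomorphic form $\omega$ restricts nontrivially to $A$. Because the endomorphisms produced by the W\"ustholz analytic subgroup theorem commute with the $\mb{Z}/n$-action (as noted in the discussion after Proposition \ref{man}), this isogeny decomposition is $\mb{Z}/n$-equivariant; hence each eigenspace $H^1(A)[i]$ is a $\mb{Q}(\zeta_n)$-sub-Hodge structure of $H^1(\Prym_x)[i]$, and the dimension count in the text shows each $H^{1,0}(A)[i] \oplus H^{0,1}(A)[i]$ is one-dimensional, giving the well-defined CM type $n_A(i) \in \{0,1\}$ with $n_A(1) = 1$.

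Next I would promote this to an integral statement. Using the fixed splitting $V_{\mb{Z}} \cong V_A \oplus V'$ of \eqref{eqn:integralsplitting} attached to the CM type $\{n_A(i)\}$ — with $V_A$ of dimension $\phi(n)$ carrying precisely that CM type — the isogeny $\Prym_x \sim A \times B$ furnishes an isometry (up to finite index, i.e. after conjugating by a suitable $\eta \in G(\mb{A}_f)$) of the rational Hodge structure $V_{\mb{Q}}$ with $(V_A)_{\mb{Q}} \oplus (V')_{\mb{Q}}$ respecting the $\mb{Q}(\zeta_n)$-action. The Hodge structure on $\Prym_x$ is then classified by a point $h \in \mf{H}$ that, under this identification, factors through $(G_A)_{\mb{R}}$, since $h$ preserves both summands $V_A \otimes \mb{Q}$ and $V' \otimes \mb{Q}$. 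By definition of $G_A$ and of the special subvariety $\Sh_A$ (more precisely, its Hecke translate by the relevant $\eta$), the point $P(x) = [\Prym_x]$ therefore lies on the special subvariety of $\Sh$ associated to $G_A$. Since $x$ is recovered from $P(x)$, this is exactly the claim.

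The one genuine subtlety — and the step I expect to require the most care — is the bookkeeping of Hecke translates and lattices: an \emph{isogeny} $\Prym_x \sim A \times B$ need not respect the chosen integral lattices $V_{\mb{Z}}$ versus $V_A \oplus V'$, so strictly speaking one lands not on $\Sh_A$ itself (the $\eta = 1$ translate) but on some Hecke translate $\mf{H}^+_{G_A} \times \eta K$. This is precisely why the statement of Proposition \ref{prop:splittingcmshimura} only asserts membership in ``a special subvariety associated to the group $G_A$'' rather than in $\Sh_A$ on the nose; the finitely many possible $A$ up to isogeny, together with the finitely many $\mb{Z}[\zeta_n]$-lattices in each $(V_A)_{\mb{Q}}$ up to the relevant equivalence, ensure only finitely many such translates occur, which is all that the later unlikely-intersection argument needs. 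Everything else is a direct unwinding of the definitions in Definition \ref{defn:shimura} and the surrounding paragraphs.
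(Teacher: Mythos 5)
Your proposal is correct and matches the paper's intended argument: the paper treats this proposition as an immediate consequence of Theorem \ref{thm:splittingcm} together with Proposition \ref{man}, and the only point it elaborates on is the one you flag as the ``genuine subtlety'' --- namely, that the isogeny $\Prym_x \sim A \times B$ need not preserve lattices, which is recorded in the remark after the proposition stating that the element $\eta \in G(\mb{A}_f)$ measures the isogeny to the reference integral Hodge splitting \eqref{eqn:integralsplitting}. Your unwinding of the $\mb{Z}/n$-equivariance giving the CM type, and of the Hecke-translate bookkeeping, is exactly what the paper leaves implicit.
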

\begin{rmk}
The element $\eta\in G(\mb{A}_f)$ is measuring  the isogeny to the reference integral Hodge splitting (\ref{eqn:integralsplitting}).
\end{rmk}

We may now describe the signatures the unitary form defining $G_A$ in terms of these numbers $n_A(i)$: under the $i^{\mathrm{th}}$ embedding of $\mb{Q}(\zeta_n)^+ \hookrightarrow \mb{R}$ induced from $\zeta_n \mapsto \zeta_n^i$, we have that the unitary form now has signature $(2i - 1 - n_A(i), 2(n-i)-2+n_A(i))$ and 
$$\dim \Sh_A = \sum_{\substack{i=1,\\ i \perp n}}^{\lfloor (n-1)/2 \rfloor} (2i-1 - n_A(i))(2(n-i)-2+n_A(i)).$$  
More interesting is the codimension of $\Sh_A$ within $\Sh$, namely

$$\codim_{\mc{X}} \Sh_A = \sum_{\substack{i=1,\\ i \perp n}}^{\lfloor (n-1)/2 \rfloor} \Big( n_A(i)(2(n-i)-1) + (1 - n_A(i))(2i-1) \Big).$$ 
In particular, as $n_1(A) = 1$, we have 
\begin{equation}\label{wow} 
\codim_{\Sh} \Sh_A \ge 2n - 3,
\end{equation}
with equality if and only if there is only one value of $i$ in the sum, i.e. $\phi(n) = 2$. On the other hand, $\dim \mc{M}_{0, 2n} = 2n-3$, so unless equality holds in~\eqref{wow}, one generically expects $\mc{M}_{0, 2n}$ and (Hecke translates of) $\Sh_A$ to not intersect within $\Sh$ purely for dimensional reasons. The Zilber-Pink conjecture of transcendental number theory makes precise this expectation and we recall it shortly -- but first we allow a brief digression on the particular cases of $\phi(n) = 2$ where this dimensional expectation does not hold.

\begin{ex}
We give an example to illustrate the numerology that is at play here, for $n=5$, which is the smallest value for which the Dolgachev Calabi-Yau varieties give  counterexamples to the Attractor Conjecture. In this case, since $5$ is a prime number, the Prym variety is simply the Jacobian of $C$. Let us also fix a CM type, say $n_A(1)=n_A(2)=1$; then we write schematically the splitting of the Hodge structure as
\begin{equation}
    \begin{pmatrix}
    1 & 7\\
    3 & 5\\
    5 & 5\\
    7 & 1
    \end{pmatrix}
    = \begin{pmatrix}
    1 & 0\\
    1& 0\\
    0 & 1\\
    0 & 1\\
    \end{pmatrix}+
    \begin{pmatrix}
    0 & 7\\
    2 & 5\\
    5 & 2\\
    7 & 0 \\
    \end{pmatrix}.
\end{equation}
In the equation above we follow Notation \ref{notation:hodge}, and on the right hand side we have written the dimension matrices of the summands of this splitting.
\end{ex}
\subsection{A brief digression: the arithmetic cases of $n = 3, 4, 6$ and connections to tilings of the sphere}
In this subsection we observe that the Attractor Conjecture works remarkably well in the cases when the Calabi-Yau moduli space does happen to be a Shimura variety, and point out a connection to the  tilings of the sphere by polygons due to Engel-Smillie \cite{phil}. 
\begin{prop}
For $n=3, 4, 6$, there is a bijection between attractor points and tilings of the sphere by triangles, squares and hexagons, respectively.
\end{prop}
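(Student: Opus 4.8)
The plan is to match up the two sides of the asserted bijection by identifying, in each case $n=3,4,6$, the attractor points with the CM points of the relevant ball quotient, and then invoking the Engel-Smillie correspondence between such CM points and sphere tilings. First I would recall that for $n=3,4,6$ we have $\phi(n)=2$, so that equality holds in \eqref{wow}: the codimension of $\Sh_A$ inside $\Sh$ equals $\dim \mc{M}_{0,2n} = 2n-3$. In these cases the image $P(\mc{M}_{0,2n})$ is in fact an open subset of a sub-Shimura variety of $\Sh$ — concretely a complex ball quotient of dimension $2n-3$ (this is the Deligne-Mostow picture; for $n=3$ this is the classical fact that the moduli of $6$ points on $\mb{P}^1$ with the associated triple cover is a ball quotient of dimension $3$, and similarly $n=4,6$ give the other weight-one hypergeometric ball quotients). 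Under this identification, Lemma~\ref{attractorcondition} says precisely that a point is an attractor point exactly when the period vector $\pi(s,\omega)$ has all coordinates in $\mb{Q}(\zeta_n)$; since $\mb{Q}(\zeta_n)$ is (a $\mb{Q}$-form of) the reflex field acting on the one-dimensional eigenspace $H^{1,0}(C)[1]$, this condition is equivalent to the corresponding abelian variety (the Prym, which here is the Jacobian up to the obvious factors) having complex multiplication — i.e. to $(s,\omega)$ being a CM point of the ball quotient. This is exactly the content of Theorem~\ref{thm:splittingcm}/Proposition~\ref{prop:splittingcmshimura} specialized to $\phi(n)=2$, where the "some other abelian variety" factor is forced and the splitting condition becomes the bare CM condition.

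Next I would bring in the combinatorial side. Engel-Smillie \cite{phil} establish a bijection between isomorphism classes of sphere tilings by triangles (respectively squares, hexagons) — i.e. flat cone metrics on $S^2$ with cone angles of the appropriate type, cut out by a balanced tiling — and CM points of precisely these three ball quotients; the dictionary is that the flat surface obtained by the $n$-fold cyclic branched cover of $\mb{P}^1$ equipped with the flat metric $|\omega|^2$ for $\omega \in H^{1,0}(C)[1]$ is a translation surface whose holonomy / period lattice is an order in $\mb{Q}(\zeta_n)$ exactly when the tiling closes up combinatorially, and the tiling data is recovered from the integral structure of the periods. (For $n=3$ the relevant cone angles are multiples of $2\pi/3$ giving equilateral-triangle tilings; $n=4$ gives right-angle square tilings; $n=6$ gives $2\pi/3$-hexagon tilings; these are exactly the three cases where the cone metric is itself "arithmetic" because $\mb{Z}[\zeta_n]$ is the ring of integers of a field with a single pair of complex places modulo totally real.) Composing the two bijections — attractor points $\leftrightarrow$ CM points of the ball quotient $\leftrightarrow$ sphere tilings — yields the claim.

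I would then spell out the composite correspondence concretely so that the statement is self-contained: given an attractor point, the witnessing form $\omega \in H^{1,0}(C)[1]\cap H^1(C)[1]$ (which exists by Lemma~\ref{attractorcondition}) makes $(C,\omega)$ into an abelian differential whose periods against the $\mb{Z}$-lattice $H_1(C,\mb{Z})[-1]$ span a lattice commensurable with $\mb{Z}[\zeta_n]$ inside $\mb{C}$; pushing $|\omega|^2$ down to $\mb{P}^1$ gives a flat cone metric on $S^2$ with cone points at the $2n$ branch points, and the integrality of the periods means this metric is tiled by the fundamental domain of $\mb{Z}[\zeta_n]$ — a triangle, square, or hexagon according as $n=3,4,6$. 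Conversely a tiling gives a flat structure, hence an abelian differential of the correct eigentype on the cyclic cover, hence (integrating) a point of $\widetilde{\mc{M}}_{0,2n}$ with periods in $\mb{Q}(\zeta_n)$, hence an attractor point.

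The main obstacle will be making the translation-surface bijection precise enough to be rigorous rather than merely morally clear: one must pin down exactly which flat cone metrics on $S^2$ arise (the cone angle constraints at the $2n$ marked points and the single excess cone point, and the balancing condition that makes the metric tileable), check that the period-lattice of $|\omega|^2$ is genuinely an order in $\mb{Z}[\zeta_n]$ rather than merely lying in $\mb{Q}(\zeta_n)$ up to scale, and verify that the scaling ambiguity (the $\mb{G}_m$-action defining $\mc{M}_{0,2n}'$ over $\mc{M}_{0,2n}$) exactly matches the scaling ambiguity of a tiling by unit triangles/squares/hexagons. All of this is exactly what Engel-Smillie carry out, so the cleanest route is to cite \cite{phil} for the CM-points-to-tilings bijection and supply only the identification of attractor points with CM points, which is immediate from Lemma~\ref{attractorcondition} together with the Shiga-Wolfart argument of Proposition~\ref{man} in the degenerate case $\phi(n)=2$.
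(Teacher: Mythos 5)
Your route is genuinely different from the paper's. The paper's proof is a two-sentence observation: after quotienting by the arithmetic group, the Schwarz map $\pi$ of Section~\ref{subsection:dense} coincides with the developing map $D$ of Engel--Smillie, and integral points in the image of $D$ correspond to tilings; combined with Lemma~\ref{attractorcondition}, which identifies attractor points with $\mb{Q}(\zeta)$-rational period vectors (rescalable to $\mb{Z}[\zeta]$-integral ones), this yields the bijection directly, with no CM theory entering at all. You instead factor through CM points of the ball quotient.

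The gap is in your justification of the middle step. You derive ``attractor point $\Rightarrow$ CM point'' from Theorem~\ref{thm:splittingcm}, Proposition~\ref{prop:splittingcmshimura}, and the ``Shiga--Wolfart argument of Proposition~\ref{man}''---but every one of those carries the standing hypothesis that the point is already defined over $\qbar$. Shiga--Wolfart is a transcendence statement \emph{about} algebraic abelian varieties; it produces no algebraicity. In the arithmetic cases $n=3,4,6$ the attractor points are indeed algebraic CM points, but that is a conclusion the proposition delivers (tilings are manifestly arithmetic data), not a hypothesis you may assume, so the appeal is circular, and the ``reflex field'' remark does not close the loop. The correct and elementary replacement, if you want to keep the CM-point factorization, is to observe that when $\phi(n)=2$ a $\mb{Q}(\zeta)$-rational $(1,0)$-line $L\subset V[1]$ forces the Hodge cocharacter to factor through $\mathrm{G}\bigl(U(L)\times U(L^{\perp})\bigr)$, inside which it lands in a torus, so the Mumford--Tate group is a torus and the point is special; algebraicity then follows from the general theory of special points on Shimura varieties, with no transcendence input needed.
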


\begin{proof}
After quotienting by the appropriate arithmetic group, the Schwarz map $\pi$ considered in Section \ref{subsection:dense} coincides precisely with  the map denoted by $D$ in \cite[Proof of Proposition 2.5, p.7]{phil}. Furthermore, the integral points in the image of $D$ correspond to tilings  of the sphere, as required.
\end{proof}

In fact, we mention an intriguing question for the interested reader: Engel-Smillie in the above paper study a very precise generating function of the attractor points in the arithmetic $n = 3, 4, 6$ cases in terms of a mock modular form; it is reasonable to ask if there may ask any analogous (but presumably more complicated) behavior in the nonarithmetic cases. 

\section{Unlikely intersection}

We now recall the Zilber-Pink conjecture:

\begin{conj}[{\cite[Conjecture~1.3]{pink}}] Given a subvariety $Y \subset \mc{X}$ of a Shimura variety and a countable collection of special Shimura subvarieties $\{X_{\alpha}\}$ of codimension greater than $\dim Y$, if $$\bigcup_{\alpha} Y \cap \mc{X}_{\alpha} \subset Y$$ is Zariski-dense, then $Y \subset \mc{X}'$ is contained within some proper special Shimura subvariety $\mc{X}' \subset \mc{X}$. \end{conj}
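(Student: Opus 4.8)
The plan is necessarily to describe the shape a proof would take, since the statement above is the Zilber--Pink conjecture itself and is open in the generality quoted; what follows is the Pila--Zannier strategy, which has been pushed through for the special case of Andr\'e--Oort (where the $\mc{X}_\alpha$ are special points), for products of modular curves, and for some families of abelian schemes, but which remains incomplete here. The overall architecture is: convert the density hypothesis into the existence of many rational points of controlled height on a definable set, apply the Pila--Wilkie counting theorem to force those points onto a semialgebraic subset, and use a functional transcendence (Ax--Schanuel) input to conclude that such a subset can only arise when $Y$ already lies in a proper special subvariety.

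First I would fix a fundamental set $\mc{F}$ for the arithmetic group $\Gamma$ acting on the Hermitian symmetric domain $\mc{D}$ uniformizing $\mc{X}$, on which the uniformization $\mathrm{unif}\colon \mc{D}\to\mc{X}$ is definable in the o-minimal structure $\mb{R}_{\mathrm{an},\exp}$, by the work of Peterzil--Starchenko and Klingler--Ullmo--Yafaev; then $Z\defeq\mathrm{unif}^{-1}(Y)\cap\mc{F}$ is a definable set. Next I would attach to each special subvariety $\mc{X}_\alpha$ of codimension $>\dim Y$ a complexity $\Delta(\mc{X}_\alpha)$ measuring the arithmetic of its defining datum (discriminants of the relevant CM and reflex fields, conductors, the degree of the Hecke correspondence realizing it), and record that $\mc{X}_\alpha$ pulls back inside $\mc{F}$ to $\Gamma$-translates of rational linear subspaces whose heights are polynomially bounded in $\Delta(\mc{X}_\alpha)$. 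Zariski density of $\bigcup_\alpha Y\cap\mc{X}_\alpha$ then yields, for infinitely many $\alpha$, rational points of height $\ll \Delta(\mc{X}_\alpha)^{c}$ lying on $Z$ (more precisely, on an auxiliary definable family parametrizing the relevant intersections).

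Then I would invoke the Pila--Wilkie theorem: for any $\varepsilon>0$ the rational points of height $\le T$ on $Z$ lying outside the algebraic part of $Z$ number $O_\varepsilon(T^\varepsilon)$, so either the points produced above are eventually confined to a positive-dimensional semialgebraic subset of $Z$, or else there are only $O_\varepsilon(T^\varepsilon)$ of them up to height $T$. The Ax--Schanuel theorem for the uniformization of $\mc{X}$ (Mok--Pila--Tsimerman) handles the first alternative: a positive-dimensional semialgebraic subset of $\mathrm{unif}^{-1}(Y)$ forces $Y$ to be contained in a proper special subvariety, which is exactly the desired conclusion. So the whole argument reduces to excluding the subpolynomial alternative.

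The hard part, and the reason the statement is still a conjecture, is the arithmetic input needed to exclude that alternative: one requires Galois orbit lower bounds of the form $[\mb{Q}(\text{datum of }\mc{X}_\alpha):\mb{Q}]\gg\Delta(\mc{X}_\alpha)^{\delta}$ strong enough to beat the polynomial height bound against the subpolynomial Pila--Wilkie count, so that the surviving rational points must fall into the algebraic part after all. For special \emph{points} these bounds follow from Masser--W\"ustholz isogeny estimates together with the averaged Colmez conjecture (Tsimerman; Andreatta--Goren--Howard--Madapusi Pera; Yuan--Zhang), which is what makes Andr\'e--Oort a theorem; but for positive-dimensional special subvarieties of a general Shimura variety the analogous lower bounds are not available, and supplying them --- or finding a substitute mechanism --- is precisely what a full proof would have to contribute. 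For the purposes of this paper we therefore take the conjecture as given and pass to its consequences for the Dolgachev families.
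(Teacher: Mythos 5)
You have correctly recognized that the statement is the Zilber--Pink conjecture itself, quoted from Pink, and that the paper supplies no proof: it is taken as a hypothesis, and the main theorem is explicitly conditional on it. Your summary of the Pila--Zannier strategy and of the missing arithmetic ingredient (Galois-orbit lower bounds for positive-dimensional special subvarieties) is accurate, but no proof is expected here, so treating the conjecture as given and passing to its consequences is exactly what the paper does.
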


We will  now argue that for $\mc{M}_{0, 2n}$   is not contained in any special Shimura subvariety of $\Sh$. Once again, these arguments hold for all $n \ge 2$; the only place where the $\phi(n) > 2$ condition enters is to force the dimensional inequality. 

Recall that we have the group $G=\GL_{\mb{Q}(\zeta)}(V)\cap \GSp(V, \Psi)$ from Definition \ref{defn:shimura}. Let $G'$ denote the $\mb{Q}$-algebraic group, obtained via restriction of scalars from $\mb{Q}(\zeta)^{+}$, whose $\mb{Q}(\zeta)$-points are given by 
\[
G'(\mb{Q}(\zeta))=\prod_{r\in (\mb{Z}/n\mb{Z})^{\times}} SU(V[r]).
\]

\begin{prop}\label{prop:monodromy}
Let $\mc{G}$ denote the $\mb{Q}$-Zariski closure of the fundamental group of $\mc{M}_{0, 2n}$ acting on $H^1(C)$. Then $\mc{G}$ contains the group $G'$ above.
\end{prop}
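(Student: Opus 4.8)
The plan is to identify the monodromy of the family $C \to \mathcal{M}_{0,2n}$ with the classical hypergeometric/Lauricella monodromy studied by Deligne--Mostow, and then invoke their density results. First I would recall that the local system $H^1(C)[r]$ on $\mathcal{M}_{0,2n}$ is, up to the twist by a rank-one factor, precisely the Lauricella local system $\mathcal{L}_{\mu_r}$ attached to the weights $\mu_r = (r/n, \ldots, r/n)$ (with $2n$ equal entries summing to $2r$), and that Proposition~\ref{prop:hodgenumberscurve} records the signature $(2r-1, 2(n-r)-1)$ of the invariant Hermitian form on the fiber. The monodromy representation of $\pi_1(\mathcal{M}_{0,2n})$ on $H^1(C)[r]$ thus lands in $U(V[r])$, and the image of $\pi_1$ on the full $H^1(C)$ respects the $\mathbb{Q}(\zeta)$-structure, hence lands in $G(\mathbb{R}) \cong \prod_r U(V[r])$; this already shows $\mathcal{G} \subseteq G$ and reduces the problem to showing the Zariski closure contains $G' = \Res_{\mathbb{Q}(\zeta)^+/\mathbb{Q}} \prod_r SU(V[r])$, i.e. that the image is ``as large as possible'' in each factor and the factors are ``independent'' over $\mathbb{Q}(\zeta)^+$.

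The key input is the Deligne--Mostow analysis of exactly these monodromy groups: for the weight data coming from $2n$ equal points with exponents $r/n$, Deligne--Mostow compute the monodromy group in each factor and, crucially, determine when it is discrete (the arithmetic cases) versus Zariski dense in $SU(2i-1, 2(n-i)-1)$. The second step, then, is to cite the relevant Deligne--Mostow theorem (or its refinement by Sauter/Looijenga, or McMullen's treatment) to conclude that for each $r \in (\mathbb{Z}/n)^\times$ the Zariski closure of the image of $\pi_1(\mathcal{M}_{0,2n})$ in $U(V[r])$ contains the full special unitary group $SU(V[r])$ over the relevant real place. The third step is a Galois-theoretic independence argument: because the local systems $H^1(C)[r]$ for distinct $r$ are the distinct Galois conjugates of a single $\mathbb{Q}(\zeta)$-local system, Goursat-type reasoning forces the Zariski closure of the product representation to be a product of Galois-conjugate full groups, which is exactly $G'(\mathbb{R})$; equivalently, the $\mathbb{Q}$-Zariski closure $\mathcal{G}$, being defined over $\mathbb{Q}$ and containing $SU(V[r])$ for one $r$, must contain the whole Weil restriction $G'$.

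I expect the main obstacle to be the second step — pinning down precisely which Deligne--Mostow theorem applies and verifying its hypotheses for our specific weight data. The Deligne--Mostow condition $\Sigma$INT (or the weaker $\Sigma$INT for a subgroup) governs discreteness, and one must check that for $n \neq 3, 4, 6$ the relevant weights fail the arithmeticity/integrality condition so that the monodromy is genuinely Zariski dense rather than a thin discrete subgroup; conversely one must be careful that Zariski density of the image in $SU$, as opposed to discreteness, is what is actually recorded in the literature (it is — Zariski density of $\Gamma$-Mostow monodromy in the ambient unitary group is standard, e.g. from the irreducibility of the Lauricella system plus the fact that its monodromy preserves no proper flag or sub-Hodge structure generically). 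A secondary subtlety is bookkeeping the rank-one twist relating $\bigwedge^{2n-3}H^1(C)[r]$, $H^1(C)[r]$, and the genuinely primitive piece, so that ``$SU$'' rather than ``$U$'' appears and the statement matches $G'$ exactly; this is routine but must be done carefully to get the containment $\mathcal{G} \supseteq G'$ rather than merely $\mathcal{G} \supseteq$ (something of the same dimension).
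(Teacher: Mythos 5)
Your proposal follows essentially the same route as the paper: both reduce to the Deligne--Mostow/Looijenga computation that the monodromy in each eigenspace $V[r]$ is Zariski dense in (hence its closure contains) $SU(V[r])$, and then assemble the factors into the Weil restriction $G'$ using the $\mb{Q}(\zeta)$-equivariance of the monodromy representation. Your ``Goursat plus Galois-conjugacy'' step for combining the factors plays the role of the paper's decomposition $\mc{G}\otimes\mb{Q}(\zeta)=\prod_r\mc{G}_r$ via the idempotents of $\mb{Q}(\zeta)\otimes_{\mb{Q}}\mb{Q}(\zeta)$, and your remarks correctly identify that Zariski density (not the $\Sigma$INT discreteness criterion) is the input actually needed.
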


Before turning to the argument for this proposition, we note that this requirement is exactly the hypothesis necessary to apply Zilber-Pink to conclude Theorem~\ref{thm:main} in the $\phi(n)>2$ case of unlikely intersection. Indeed, the special Shimura subvarieties of $\mc{X}$ correspond to subgroups of $G$ 
and being contained within some special Shimura subvariety would imply a corresponding restriction on the Zariski-closure of the monodromy group. It hence remains to establish the above proposition.

\begin{proof} This is essentially due to Deligne-Mostow \cite{delignemostow}, but we will use the version stated by Looijenga in his review of Deligne-Mostow, which we now describe. For brevity, in the following, for $r\in (\mb{Z}/n)^{\times}$ we denote by $V[r]$ the $\mb{Q}(\zeta)$-vector space $H^1(C, \mb{Q}(\zeta))[r]$, and by $V$ the  $\mb{Q}$-subspace of $H^1(C, \mb{Q})$ such that 
\[
V\otimes \mb{Q}(\zeta)=\bigoplus_{r\in (\mb{Z}/n)^{\times}}V[r].
\]
 
As in \cite[Proof of Theorem 4.3]{looijenga}, we have a decomposition 
\[
\mc{G}(\mb{C})=\prod_{r\in (\mb{Z}/n)^{\times}} \mc{G}_r(\mb{C})
\]
with $\mc{G}_r(\mb{C})\subset \GL(V[r])(\mb{C})$. Since this was stated without proof in loc.cit., we provide an explanation here, although no doubt it is well known to experts. The reason is that the action of the fundamental group $\Gamma$ preserves each direct summand $V[r]$, and therefore the $\mb{Q}$-algebraic group $\mc{G}$ commutes with the action of $\mb{Q}(\zeta)$ on $H^1(C, \mb{Q})$. Therefore the base change $\mc{G}\otimes \mb{Q}(\zeta)$ commutes with the action of $\mb{Q}(\zeta)\otimes_{\mb{Q}}\mb{Q}(\zeta)$; the latter can be written as
\[
\bigoplus_{r\in (\mb{Z}/n)^{\times}}\mb{Q}(\zeta),
\]
and in particular contains idempotents $\epsilon_r$, the projection onto the $r$th factor,  for each $r\in (\mb{Z}/n)^{\times}$. Therefore we have a decomposition
\[
\mc{G}\otimes \mb{Q}(\zeta)=\prod_{r\in (\mb{Z}/n)^{\times}} \mc{G}_r,
\]
as claimed.

Furthermore, again according to \cite[Proof of Theorem 4.3]{looijenga}, $\mc{G}_r(\mb{C})$ contains the special unitary group of $V[r]$. Therefore the Zariski closure contains $G'$, as required.
\end{proof}
We now put all the ingredients together to conclude the proof of our main result.
\begin{proof}[Proof of Theorem \ref{thm:main}]
Suppose that $n\neq 3,4,6$. We assume for the sake of contradiction that attractor points are defined over $\overline{\mb{Q}}$. By Section \ref{subsection:dense} the attractor points are Zariski dense in moduli space. Now recall that we have the Prym map
\[
P:\mc{M}_{CY}\rightarrow \Sh,
\]
where $\Sh$ denotes the Shimura variety from Definition \ref{defn:shimura}, and  consider its image   $P(\mc{M}_{CY})$ inside the Shimura variety $\Sh$. That this subvariety  has dimension $2n-3$, i.e. the same dimension as $\mc{M}_{CY}$, is precisely   Corollary \ref{cor:embedding}. 

On the other hand, by Proposition \ref{prop:splittingcmshimura}, each attractor point lies in a sub-Shimura variety, whose codimension inside $\Sh$ is strictly greater than $2n-3$ by the discussion in Section  \ref{section:specialsub}.  Therefore by the Zilber-Pink conjecture, the variety $P(\mc{M}_{CY})$ must be contained in some proper special subvariety of $\Sh$, which is impossible by the monodromy computation in Proposition \ref{prop:monodromy}. Therefore the attractor points cannot be defined over $\overline{\mb{Q}}$, as required.
\end{proof}

\printbibliography[]

\end{document}